\def\aff{{\rm aff}}
\def\om{{\rm out}}
\def\i{{\rm in }}
\newtheorem{theorem}{Theorem}[section]
\newtheorem{proposition}[theorem]{Proposition}
\newtheorem{lemma}[theorem]{Lemma}
\newtheorem{corollary}[theorem]{Corollary}
\theoremstyle{definition}
\newtheorem{definition}[theorem]{Definition}
\theoremstyle{remark}
\newtheorem{remark}[theorem]{Remark}
\newcommand{\F}{\mathcal{F}}
\DeclareMathOperator{\GT}{GT}
\def\out{{\rm outd}}
\def\ind{{\rm ind}}
\newtheoremstyle{named}{}{}{\itshape}{}{\bfseries}{.}{.5em}{#1 \thmnote{#3}}
\theoremstyle{named}
\newtheorem*{namedtheorem}{Theorem}
\newcommand\multiset[2]%
\title[]{Gelfand-Tsetlin polytopes: a story of flow \& order polytopes}
\author{Ricky Ini Liu}
\address{Ricky Ini Liu, Department of Mathematics, North Carolina State University, Raleigh, NC 27695. \newline{riliu@ncsu.edu}}
\author{Karola M\'esz\'aros}
\address{Karola M\'esz\'aros, Department of Mathematics, Cornell University, Ithaca, NY 14853 and School of Mathematics, Institute for Advanced Study, Princeton, NJ 08540.  \newline{karola@math.cornell.edu}
}
\author{Avery St. Dizier}
\address{Avery St. Dizier, Department of Mathematics, Cornell University, Ithaca NY 14853.  \newline{ajs624@cornell.edu}
}
\thanks{Liu  is partially supported by a National Science Foundation Grant (DMS 1758187). 
 M\'esz\'aros is partially supported by a National Science Foundation Grant (DMS 1501059)   as well as by a von Neumann Fellowship at the IAS   funded by the Fund for Mathematics and the Friends of the Institute for Advanced Study.}
\begin{document}

\begin{abstract}
	 Gelfand-Tsetlin polytopes are prominent objects in algebraic combinatorics. The number of integer points of the Gelfand-Tsetlin polytope $\GT(\lambda)$ is equal to the dimension of the corresponding irreducible representation of $GL(n)$. It is well-known that the Gelfand-Tsetlin polytope is a marked order polytope; the authors have recently shown it to be a  flow polytope. In this paper, we draw corollaries from this result and establish a general theory connecting marked order polytopes and flow polytopes.
\end{abstract}
\maketitle

\section{Introduction}
\label{sec:intro}

Given a partition $\lambda = (\lambda_1,\dots,\lambda_n)\in \mathbb{Z}^n_{\geq 0}$, the \textbf{Gelfand-Tsetlin polytope} $\GT(\lambda)$ is the set of all nonnegative triangular arrays
	\begin{center}
		\begin{tabular}{ccccccc}
			$x_{11}$&&$x_{12}$&&$\cdots$&&$x_{1n}$\\
			&$x_{22}$&&$x_{23}$&$\cdots$&$x_{2n}$&\\
			&&$\cdots$&&$\cdots$&&\\
			&&$x_{n-1,n-1}$&&$x_{n-1,n}$&&\\
			&&&$x_{nn}$&&&
		\end{tabular}
	\end{center}
	such that 
	\begin{align*}
		x_{in}=\lambda_i &\mbox{ for all } 1\leq i\leq n,\\
		x_{i-1,j-1}\geq x_{ij}\geq x_{i-1,j} &\mbox{ for all } 1\leq i \leq j\leq n.
	\end{align*} 

The integer points of $\GT(\lambda)$ are in bijection with semistandard Young tableaux of shape $\lambda$ on the alphabet $[n]$.   Moreover, the integer point transform of $\GT(\lambda)$ projects to the Schur function $s_{\lambda}$.  The latter beautiful result generalizes to Minkowski sums of Gelfand-Tsetlin polytopes and certain Schubert polynomials as well \cite{paper1}. In this paper we will be interested in the Gelfand-Tsetlin polytope $\GT(\lambda)$ from a purely discrete geometric point of view: we will explore it as a marked order polytope and as a flow polytope.

Ardila et.\ al introduced marked order polytopes and showed that Gelfand-Tsetlin polytopes are  examples of them in \cite{GTFFLVmarked};   the present authors have recently shown that   Gelfand-Tsetlin polytopes  are  flow polytopes  \cite{paper1}. Theorem \ref{thm:gtvol} summarizes previous work by Postnikov \cite[Theorem 15.1]{beyond} on the Gelfand-Tsetlin polytope and also demonstrates how the Gelfand-Tsetlin polytopes  being flow polytopes allows us to write the number of integer points and the  volume of $\GT(\lambda)$---which are equal respectively to the  dimension of the irreducible representation $V_{\lambda}$ of $GL(n)$ and the top homogeneous component of the dimension when viewed as a polynomial in $\lambda_1,\dots,\lambda_n$---in terms of Kostant partition functions:  

\begin{theorem} \label{thm:gtvol}
	Let $\lambda\in\mathbb{Z}^n_{\geq 0}$ be a partition and $r=|V(G_\lambda)|-1=\binom{n+2}{2}-3$. The volume of $\mathrm{GT(\lambda)}$ is given by
	\begin{align}
		\label{eqn:weylvol}
		\mathrm{Vol}\mbox{ }\mathrm{GT}(\lambda) &= \prod_{1\leq i<j\leq n} \frac{\lambda_i-\lambda_j}{j-i}\\ \label{eqn:shiftedsytvol}
		&=\sum_{b_1,\ldots,b_{n-1}\geq 0} \frac{(\lambda_1-\lambda_2)^{b_1}}{b_1!}\cdots \frac{(\lambda_{n-1}-\lambda_n)^{b_{n-1}}}{b_{n-1}!}N(b_1,\ldots,b_{n-1})\\
		\label{eqn:lidskiivol}
		&=\sum_{j}
		\frac{(\lambda_1-\lambda_2)^{j_1}}{j_1!}\cdots \frac{(\lambda_{n-1}-\lambda_n)^{j_{n-1}}}{j_{n-1}!} 
		K_{G_\lambda}\left(j_1-1, \ldots, j_{n-1}-1,-1,\ldots,-1,0,\ldots,0,0\right).
	\end{align}
	
	\noindent The integer point count of $GT(\lambda)$ is given by 
	\begin{align}
		\label{eqn:weylintpts} 
		|{\rm GT}(\lambda)\cap \mathbb{Z}^{n+1 \choose 2}|&=\prod_{1\leq i<j\leq n}\frac{\lambda_i-\lambda_j+j-i}{j-i}\\
		\label{eqn:lidskiiintpts}
		&= \sum_{{j}}
		\binom{\lambda_1-\lambda_2+1}{j_1}\cdots
		\binom{\lambda_{n-1}-\lambda_n+1}{j_{n-1}}
		\binom{1}{j_n}\cdots\binom{1}{j_{\binom{n}{2}}}\binom{0}{j_{\binom{n}{2}+1}}\cdots\binom{0}{j_{r}}\\
		&\quad\quad\quad \cdot  K_{G_\lambda}\left(j_1-1, \ldots, j_{n-1}-1,j_n-1,\ldots,j_{\binom{n}{2}}-1,j_{\binom{n}{2}+1},\ldots,j_r,0\right) \notag.
	\end{align}
\end{theorem}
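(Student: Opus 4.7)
The plan is to assemble the theorem from three ingredients: the classical Weyl dimension formula, Postnikov's Theorem~15.1, and the Lidskii-type volume and Ehrhart formulas for flow polytopes applied to the graph $G_\lambda$.

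First, equation (\ref{eqn:weylintpts}) is immediate from Weyl's dimension formula for $GL(n)$ combined with the classical bijection between lattice points of $\GT(\lambda)$ and semistandard Young tableaux of shape $\lambda$ on the alphabet $[n]$. Extracting the top-degree homogeneous part in $\lambda_1, \ldots, \lambda_n$ (that is, sending $j - i \to 0$ in the numerator only) recovers equation (\ref{eqn:weylvol}). The shifted-standard-tableaux expansion (\ref{eqn:shiftedsytvol}) is precisely the statement of Postnikov's Theorem~15.1 in \cite{beyond}, so I would simply cite it.

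The substantive new step is deriving the Kostant partition function formulas (\ref{eqn:lidskiivol}) and (\ref{eqn:lidskiiintpts}). Here I would invoke the Lidskii formulas for flow polytopes: for a flow polytope on a graph $G$ with netflow vector $\mathbf{a}$, Baldoni--Vergne express the volume as a sum over weak compositions $j$ of products $\prod a_i^{j_i}/j_i!$ times $K_G(j_1 - 1, \ldots, 0)$, and the analogous Ehrhart statement replaces the monomials by binomial coefficients $\binom{a_i + 1}{j_i}$. By the main result of \cite{paper1}, $\GT(\lambda)$ is a flow polytope on $G_\lambda$; I would identify the netflow vector of this flow model with the sequence $(\lambda_1 - \lambda_2, \ldots, \lambda_{n-1} - \lambda_n, 1, \ldots, 1, 0, \ldots, 0)$ of length $r$ and then substitute directly into the Lidskii formulas to obtain (\ref{eqn:lidskiivol}) and (\ref{eqn:lidskiiintpts}).

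The main obstacle will be matching bookkeeping: I must verify that the vertex labeling of $G_\lambda$ used in \cite{paper1} places the "$\lambda_i - \lambda_{i+1}$" entries of the netflow vector at exactly the first $n-1$ coordinates, the "$1$"s at coordinates $n$ through $\binom{n}{2}$, and the "$0$"s at the remaining coordinates up to $r$, so that the argument of $K_{G_\lambda}$ in (\ref{eqn:lidskiiintpts}) reads $(j_1 - 1, \ldots, j_{n-1} - 1, j_n - 1, \ldots, j_{\binom{n}{2}} - 1, j_{\binom{n}{2}+1}, \ldots, j_r, 0)$ as stated. Once the vertex order is pinned down, the formulas follow as direct specializations with no further combinatorial content.
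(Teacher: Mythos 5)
Your overall plan coincides with the paper's proof: equations \eqref{eqn:weylvol}, \eqref{eqn:shiftedsytvol}, and \eqref{eqn:weylintpts} are quoted from Postnikov \cite{beyond}, and \eqref{eqn:lidskiivol}, \eqref{eqn:lidskiiintpts} are obtained by applying the Baldoni--Vergne--Lidskii formulas (Theorem \ref{thm:lidskii}) to the flow network $G_\lambda$. However, the one substantive step --- the bookkeeping for $G_\lambda$ --- is carried out incorrectly, and as written your derivation would not produce the stated formulas. You identify the netflow vector of $G_\lambda$ as $(\lambda_1-\lambda_2,\dots,\lambda_{n-1}-\lambda_n,1,\dots,1,0,\dots,0)$. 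By Definition \ref{def:Glambda}, the netflow is $\lambda_{j-1}-\lambda_j$ at the vertices $v_{2j}$, $\lambda_n-\lambda_1$ at the sink, and $0$ at \emph{every} other vertex; there are no $1$'s in the netflow. If you fed your vector into the Lidskii volume formula, the factors $\tfrac{1^{j_i}}{j_i!}$ at positions $n,\dots,\binom{n}{2}$ would fail to force $j_i=0$, so you would not recover the restricted sum in \eqref{eqn:lidskiivol}, whose Kostant argument has $-1$'s precisely because $j_i-\om_i = 0-1$ there.

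The $1$'s and $0$'s you are trying to account for come not from the netflow but from the shifted outdegrees $\om_i=\out_i-1$ appearing in Theorem \ref{thm:lidskii}: the correct Ehrhart formula uses $\binom{a_i+\om_i}{j_i}$ and $K_G(j_1-\om_1,\dots,j_n-\om_n,0)$, not the uniform $\binom{a_i+1}{j_i}$ and $j_i-1$ you state. In the vertex order fixed before the theorem, the first $\binom{n}{2}$ vertices are the $v_{ij}$ with $2\le i\le j\le n$, each with outdegree $2$ (edges to $v_{i+1,j}$ and $v_{i+1,j+1}$), so $\om_i=1$; this yields $\binom{\lambda_i-\lambda_{i+1}+1}{j_i}$ for $i\le n-1$, $\binom{0+1}{j_i}=\binom{1}{j_i}$ for $n\le i\le\binom{n}{2}$, and the entries $j_i-1$ in the Kostant argument. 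The remaining $2(n-1)$ non-sink vertices ($v_{i,i-1}$ and $v_{i,n+1}$) have outdegree $1$, so $\om_i=0$, giving $\binom{0}{j_i}$ and the unshifted entries $j_i$. With this correction your argument matches the paper's; without it the formulas you would derive are simply different from \eqref{eqn:lidskiivol} and \eqref{eqn:lidskiiintpts}.
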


It is equalities  \eqref{eqn:lidskiivol} and   \eqref{eqn:lidskiiintpts} that  follow from $GT(\lambda)$ being a flow polytope. The other equations are known and follow from the representation theory of $GL(n)$ and from Postnikov's work \cite[Theorem 15.1]{beyond}. For the notation used in Theorem \ref{thm:gtvol}, we refer the reader to Section \ref{sec:gtflowpolytope}. We remark that from equations \eqref{eqn:shiftedsytvol} and \eqref{eqn:lidskiivol}, we obtain that the evaluations $N(b_1,\ldots,b_{n-1})$ and $K_{G_\lambda}\left(j_1-1, \ldots, j_{n-1}-1,-1,\ldots,-1,0,\ldots,0,0\right)$ are equal. We additionally provide a bijective proof of this in Section \ref{sec:gtflowpolytope}.

Section \ref{sec:marked} is devoted to  marked order polytopes in general.  
In light of the work of the second author with Morales and Striker \cite{floworderpolytopes} where they show that order polytopes of strongly planar posets are flow polytopes, it is natural to wonder if the   
 Gelfand-Tsetlin polytopes being both a marked order polytope and a flow polytope is part of a larger picture. Indeed, we show that marked order polytopes of strongly planar posets with certain conditions on the markings  are flow polytopes:

\begin{theorem} 
	\label{thm:markedorderflow} 
	If $(P,A,\lambda)$ is a marked poset admitting a bounded strongly planar embedding, then the marked order polytope $\mathcal{O}(P,A)_{\lambda}$ is integrally equivalent to the flow polytope $\F_{G_{(P,A,\lambda)}}$.
\end{theorem}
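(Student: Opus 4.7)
The plan is to generalize the Mészáros--Morales--Striker construction from unmarked to marked posets. Given the bounded strongly planar embedding of $(P,A,\lambda)$, I would build $G_{(P,A,\lambda)}$ by taking the bounded faces of the planar Hasse diagram as interior vertices and the arcs of the outer face between consecutive marked boundary elements as the source/sink vertices. Each cover $p \lessdot q$ of $P$ becomes a single edge of $G_{(P,A,\lambda)}$ joining the two faces it separates, oriented according to the planar left/right convention. The markings $\lambda$ determine the netflow vector: the net flow at a boundary vertex of $G$ equals (up to sign) the difference of the $\lambda$-values at the two marked elements bordering that arc, while interior vertices have netflow zero.

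With this setup in place, I would define the candidate integral equivalence $\Phi \colon \mathcal{O}(P,A)_\lambda \to \F_{G_{(P,A,\lambda)}}$ by $\Phi(f)(e_{p \lessdot q}) = f(q) - f(p)$. Nonnegativity of $\Phi(f)$ as a flow is immediate from $f$ being order-preserving. Conservation of flow at an interior vertex $v$ of $G$ translates into the telescoping identity obtained by summing $\pm(f(q)-f(p))$ around the boundary of the corresponding bounded face, which vanishes automatically. The netflow condition at a boundary vertex of $G$ reduces, by the same telescoping, to a difference of $\lambda$-values on the two marked endpoints of the corresponding arc, matching the netflow vector prescribed above.

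To show $\Phi$ is bijective, I would describe the inverse explicitly: for $\phi \in \F_{G_{(P,A,\lambda)}}$ and $p \in P$, set $f(p) = \lambda(a) + \sum_{e} \pm \phi(e)$ summed along any saturated chain connecting $p$ to a marked element $a \in A$. Well-definedness, independent of the chain, uses flow conservation at the interior vertices of $G$ combined with strong planarity: the cycle space of the Hasse diagram is spanned by the boundaries of the bounded faces, exactly the relations imposed by flow conservation. Since $\Phi$ and $\Phi^{-1}$ are affine with integer coefficients, the map is an integral equivalence.

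The main obstacle is the bookkeeping around the boundary of the planar embedding. In particular, I expect the delicate part to be (i) identifying precisely how many boundary vertices $G$ should have and which $\lambda$-values contribute to each netflow, and (ii) verifying that the \emph{bounded} hypothesis on the embedding is exactly what is needed so that every element of $P$ is connected by a saturated chain to some marked element and every nonmarked vertex of the Hasse diagram has all incident faces bounded. Once the combinatorics of the boundary is pinned down, the algebraic verification of well-definedness and integrality should be routine.
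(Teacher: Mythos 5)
Your overall strategy is the right one and is essentially the paper's: build the flow network as a modification of the planar dual of the bounded embedding, send a point $x$ of the marked order polytope to the flow assigning $x_q-x_p$ to the dual edge crossing the cover $p\lessdot q$, and invert by summing flow values along a saturated chain down to a marked element, using the fact that the cycle space of a planar graph is generated by its bounded face boundaries. However, there is a genuine gap in your construction of the network itself, and since the theorem asserts integral equivalence with the specific polytope $\F_{G_{(P,A,\lambda)}}$, this is not a cosmetic issue. You place all sources and sinks on the outer face, as arcs between consecutive marked elements of the outer boundary, and you assert that every interior (bounded-face) vertex has netflow zero. But the hypothesis does not force the marked elements onto the outer boundary: a bounded strongly planar embedding only requires that whenever the \emph{left} boundary of a bounded face $F$ contains a marked element, $\min(F)$ and $\max(F)$ are marked as well. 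For instance, let $P$ consist of three pairwise incomparable elements $a_1,a_2,a_3$, embed the three chains of $\widehat{P}=P\sqcup\{\hat{0},\hat{1}\}$ left to right, and take $A=\{\hat{0},\hat{1},a_2\}$: this satisfies the definition, yet $a_2$ lies on no arc of the outer face. Under your construction the constraint $x_{a_2}=\lambda(a_2)$ is never encoded; your flow polytope has the dimension of the \emph{unmarked} order polytope's, and your map $\Phi$ is injective but not surjective. The paper's network instead modifies the dual vertex $v_F$ of \emph{every} bounded face $F$ whose left boundary carries a marking: $v_F$ keeps its incoming edges and becomes a sink with netflow $-(\lambda(\max F)-\lambda(\min F))$, while its outgoing edges (those crossing the left boundary) are redistributed onto new source vertices, one for each gap between consecutive marked elements of that left boundary, with netflow the difference of the two bounding markings. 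It is exactly this splitting that forces $x_a=\lambda(a)$ for marked elements interior to the embedding.

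Relatedly, the step you defer as ``delicate bookkeeping'' --- well-definedness of the inverse --- is precisely where this left/right asymmetry does its work. For an unmodified face, agreement of the two chain-sums around the face is ordinary flow conservation at a netflow-zero vertex; for a modified face it is an identity involving the netflows of the new sources and of the sink, and verifying that this identity is equivalent to (no weaker than) the marking constraints is the substantive content of the proof. One also needs, as the paper points out, that from any marked point there is a path to $\hat{0}$ walking only along left boundaries of faces to their minima, which is what guarantees the inverse assigns the prescribed values $\lambda(a)$ on all of $A$ independently of which marked element one anchors to. Fixing the network as above and then carrying out this verification would complete your argument.
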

 
For the terminology used in Theorem \ref{thm:markedorderflow}, see Sections \ref{sec:marked} and \ref{sec:markedasflow}.  There is a natural way of subdividing the marked order polytope $\mathcal{O}(P,A)_{\lambda}$ into products of simplices labeled by certain linear extensions of the poset (Theorem \ref{thm:markedordervolume}), and there is a natural way of subdividing the flow polytope $\F_{G_{(P,A,\lambda)}}$ into products of simplices labeled by integer points of other flow polytopes (Section \ref{subsec:flow}). In Section \ref{sec:bijection} we show that these subdivisions map to each other under the integral equivalence of Theorem  \ref{thm:markedorderflow}, and we conclude by  bijecting their combinatorial labelings  in Corollary \ref{cor:generalbij}.

\medskip

\noindent {\bf Roadmap of the paper.}  In Section \ref{sec:gtflowpolytope} we define flow polytopes and show several consequences of Gelfand-Tsetlin polytopes being integrally equivalent to flow polytopes for their volume and Ehrhart polynomial formulas. It is well-known that the Gelfand-Tsetlin polytope is also a marked order polytope, and in Section \ref{sec:marked} we define marked order polytopes as well as collect and extend some known results about them. Section \ref{sec:markedasflow} proves Theorem \ref{thm:markedorderflow}, which gives conditions under which marked order polytopes are integrally equivalent to flow polytopes. The Gelfand-Tsetlin polytopes appear as a special case in this more general theory. Finally, in Section \ref{sec:bijection} we review the subdivision methods for flow polytopes and (marked) order polytopes, and we show that they map to each  other under the integral equivalence of Theorem  \ref{thm:markedorderflow}. We conclude by bijecting the two sets of combinatorial labels coming from the subdivisions of flow and marked order polytopes in Corollary \ref{cor:generalbij}.

\section{Gelfand-Tsetlin polytopes as flow polytopes}
\label{sec:gtflowpolytope}

In this section we recall the result from \cite{paper1} that the Gelfand-Tsetlin polytope is integrally equivalent to  a flow polytope, and then we study its volume and Ehrhart polynomial.  We start by defining flow polytopes and providing the necessary background on them following \cite{paper1}. 

 \subsection{Background on flow polytopes}

Let $G$ be a loopless directed acyclic connected (multi-)graph on the vertex set $[n+1]$ with $m$ edges. An integer vector $a=(a_1,\ldots,a_n,-\sum_{i=1}^na_i)\in \mathbb{Z}^{n+1}$ is called a \textbf{netflow vector}. A pair $(G,a)$ will be referred to as a \textbf{flow network}. To minimize notational complexity, we will typically omit the netflow $a$ when referring to a flow network $G$, describing it only when defining $G$. When not explicitly stated, we will always assume vertices of $G$ are labeled so that $(i,j)\in E(G)$ implies $i<j$.

To each edge $(i,j)$ of $G$, associate the type $A$ positive root $e_i-e_j\in\mathbb{R}^n$. Let $M_G$ be the incidence matrix of $G$, the matrix whose columns are the multiset of vectors $e_i-e_j$ for $(i,j)\in E(G)$. A \textbf{flow} on a flow network $G$ with netflow $a$ is a vector $f=(f(e))_{e\in E(G)}$ in $\mathbb{R}_{\geq 0}^{E(G)}$ such that $M_Gf=a$. Equivalently, for all $1\leq i \leq n$, we have 
\[\sum_{e=(k,i)\in E(G)}f(e)+a_i = \sum_{e=(i,k)\in E(G)} f(e). \]
The fact that the netflow of vertex $n+1$ is $-\sum_{i=1}^n a_i$ is implied by these equations.

Define the \textbf{flow polytope} $\mathcal{F}_G(a)$ of a graph $G$ with netflow $a$ to be the set of all flows on $G$:
\[\mathcal{F}_G=\mathcal{F}_G(a)=\{f\in\mathbb{R}^{E(G)}_{\geq 0} \mid M_Gf=a \}. \]

Given a graph $G$, the \textbf{Kostant partition function} $K_G$ of $G$ evaluated at a vector $b\in\mathbb{Z}^{n+1}$ is the number of ways to write $b$ as a nonnegative integer combination of the multiset of vectors $\{e_i-e_j\mid (i,j)\in E(G) \}$, or equivalently
\[K_G(b) = \left|\mathcal{F}_G(b)\cap\mathbb{Z}^{E(G)}\right|. \]

\begin{remark}
	\label{rem:flownetwork}
	When $G$ is a flow network $(G,a)$, we will write $\mathcal{F}_G$ for $\mathcal{F}_G(a)$. For any $b\in \mathbb{Z}^{n+1}$, we will write $\mathcal{F}_G(b)$ and $K_G(b)$  when we wish to use a vector possibly different from the netflow $a$ associated to $G$.
\end{remark}

The following remarkable theorem gives the volume and Ehrhart polynomial formulas for a family of flow polytopes.

\begin{theorem}[Baldoni--Vergne--Lidskii formulas {\cite[Thm.\ 38]{BV2}}]
\label{thm:lidskii} 
Let $G$ be a connected graph on the vertex set $[n+1]$ with $m$
edges, and let $a=(a_1,\ldots,a_n,-\sum_{i=1}^n a_i)$ with $a_i \in \mathbb{Z}_{\geq 0}$ for $i\in [n]$.  Direct the edges of $G$ by $i\to j$ if $i<j$, and assume there is at least one outgoing edge at vertex $i$ for each $i=1,\ldots,n$. Then
\begin{align} \label{eq:lidskiivolume}
\mathrm{Vol}\mbox{ } \mathcal{F}_G(a) &= \sum_{j}
\frac{a_1^{j_1}}{j_1!}\cdots \frac{a_n^{j_n}}{j_n!} K_{G}\left(j_1-\om_1, \ldots, j_n - \om_n,0\right),\\
|\F_G(a) \cap \mathbb{Z}^{E(G)}|&= \sum_{{j}}
\binom{a_1+\om_1}{j_1}\cdots
\binom{a_{n}+\om_n}{j_{n}} \cdot  K_{G}\left(j_1-\om_1, \ldots, j_n -
                 \om_n,0\right), \label{eq:lidskiikost}\\
&= \sum_{{j}}
\multiset{a_1-\i_1}{j_1}\cdots
\multiset{a_{n}-\i_n}{j_{n}} \cdot  K_{G}\left(j_1-\om_1, \ldots, j_n - \om_n,0\right), \label{eq:lidskiikostmultiset}    
\end{align} 
for $\om_i=\out_i-1$ and $\i_i=\ind_i-1$, where $\out_i$ and $\ind_i$ denote the outdegree and indegree of vertex $i$ in $G$. Each sum is over weak compositions $j=(j_1,j_2,\ldots,j_n)$ of $m-n$ that are $\geq (\om_1,\ldots,\om_n)$ in dominance order, and $\multiset{n}{k}=\binom{n+k-1}{k}$.
\end{theorem}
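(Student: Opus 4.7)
The plan is to prove the volume formula (\ref{eq:lidskiivolume}) first, from which the Ehrhart polynomial formulas (\ref{eq:lidskiikost}) and (\ref{eq:lidskiikostmultiset}) will follow by the same argument applied to lattice point counts rather than continuous volumes. The natural starting point is the generating-function identity
\[
\sum_{b} K_G(b)\, x^b \;=\; \prod_{(i,j) \in E(G)} \frac{1}{1 - x_i x_j^{-1}},
\]
valid in an appropriate cone of $b$.

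By standard Jeffrey--Kirwan / Khovanskii--Pukhlikov theory, $\mathrm{Vol}\, \F_G(a)$ can be expressed as an iterated residue of this product weighted by the exponential $\exp(\sum a_i \log x_i)$. I would compute the residue iteratively in $x_1, x_2, \ldots, x_n$, following a topological ordering consistent with the edge orientations. At stage $i$, the outgoing edges $(i,k) \in E(G)$ contribute $\out_i$ factors with a simple pole at $x_i$; expanding the exponential and summing the residues in $x_i$ yields a factor $a_i^{j_i}/j_i!$, while the remaining rational function reassembles into the Kostant partition function of a smaller contracted graph. Tracking the contributions shows that the total residue is exactly $K_{G}(j_1 - \om_1, \ldots, j_n - \om_n, 0)$, with the offset $\om_i = \out_i - 1$ arising because one unit of the pole order at $x_i$ is consumed before the exponential factor can be expanded.

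The Ehrhart formula (\ref{eq:lidskiikost}) follows by rerunning the same iterated residue, now configured to extract the coefficient of $x^a$ in the original rational generating function (rather than integrating against an exponential); the same Kostant partition function contribution $K_G(j - \om, 0)$ survives, with $a_i^{j_i}/j_i!$ replaced by the discrete analogue $\binom{a_i + \om_i}{j_i}$. The multiset form (\ref{eq:lidskiikostmultiset}) then follows from (\ref{eq:lidskiikost}) by combining the reciprocity identity $\multiset{n}{k} = (-1)^k \binom{-n}{k}$ with a reflection of the flow polytope, swapping the roles of indegrees and outdegrees at each interior vertex.

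The hard part will be justifying that only weak compositions $j \geq (\om_1, \ldots, \om_n)$ in dominance order contribute. Combinatorially, $K_G(j - \om, 0)$ vanishes outside this range because the purported netflow lies outside the cone spanned by the edge vectors of $G$; making the iterated residue calculation respect exactly this dominance condition---without picking up spurious contributions from poles taken in the wrong order---requires choosing the residue iteration very carefully along a topological ordering of $G$ and verifying that cross terms cancel. A more combinatorial alternative, closer in spirit to the rest of the paper, would be to subdivide $\F_G(a)$ directly into products of simplices indexed by the very compositions $j$ appearing in the sum, where each simplex of type $j$ contributes exactly the corresponding summand; this is precisely the subdivision perspective taken up in Section~\ref{sec:bijection}, and it bypasses the residue bookkeeping in favor of a direct geometric volume count.
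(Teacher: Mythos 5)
First, a point of comparison: the paper does not prove Theorem~\ref{thm:lidskii} at all---it is imported verbatim from Baldoni--Vergne \cite[Thm.~38]{BV2}---so there is no internal proof to measure your attempt against. The relevant benchmark is the original argument in \cite{BV2}, and your outline does gesture at that strategy (iterated Jeffrey--Kirwan residues applied to $\prod_{(i,j)\in E(G)}(1-x_ix_j^{-1})^{-1}$). As a proof, however, what you have written is a plan whose decisive steps are all asserted rather than carried out.

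Concretely: (1) The claim that taking the residue in $x_i$ produces the factor $a_i^{j_i}/j_i!$ while ``the remaining rational function reassembles into the Kostant partition function of a smaller contracted graph'' is precisely where the entire content of \eqref{eq:lidskiivolume} lives, and your own phrasing points to a problem---the formula involves $K_G$ of the \emph{original, uncontracted} graph $G$ evaluated at the shifted vector $(j_1-\om_1,\ldots,j_n-\om_n,0)$, so the intermediate contracted graphs must be reassembled back into $G$, and the heuristic that ``one unit of pole order is consumed'' does not establish the shift by $\om_i=\out_i-1$. (2) You explicitly leave the dominance restriction on $j$ open, and it is not merely a matter of $K_G(j-\om,0)$ vanishing: one must show the residue iteration produces no spurious nonzero terms outside that range. (3) The proposed derivation of \eqref{eq:lidskiikostmultiset} from \eqref{eq:lidskiikost} by reflecting the polytope is not a formal consequence of $\multiset{n}{k}=(-1)^k\binom{-n}{k}$: reversing the edges of $G$ also reverses the netflow vector and permutes the vertices, so the two expansions are not images of one another under a sign flip; in \cite{BV2} the multiset form arises from a separate expansion. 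Your suggested combinatorial alternative---subdividing $\F_G(a)$ into products of simplices indexed by the compositions $j$, each contributing $\frac{a_1^{j_1}}{j_1!}\cdots\frac{a_n^{j_n}}{j_n!}$ with multiplicity $K_G(j-\om,0)$---is indeed a complete and known route (it is essentially the approach of \cite{MMlidskii}, whose basic step is Lemma~\ref{lem:bigreduction} here), and it is closer in spirit to this paper; but you have not executed it either, so the proposal as it stands does not constitute a proof.
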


\subsection{The Gelfand-Tsetlin polytope as a flow polytope} 

The following theorem was proved in \cite{paper1}, where $G_\lambda$ is a network to be defined below.

\begin{theorem}[\cite{paper1}]
	\label{thm:gtflowpolytope}
	$\mathrm{GT}(\lambda)$ is integrally equivalent to $\mathcal{F}_{G_\lambda}$.
\end{theorem}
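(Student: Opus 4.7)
The plan is to derive this theorem from the general framework developed later in this paper, combined with the known realization of $\mathrm{GT}(\lambda)$ as a marked order polytope. Following Ardila et al.\ \cite{GTFFLVmarked}, I would first set up the underlying marked poset $(P, A, \lambda)$: the ground set of $P$ consists of the entries $\{x_{ij} : 1 \leq i \leq j \leq n\}$, the covering relations come from the interlacing inequalities $x_{i-1,j-1} \geq x_{ij}$ and $x_{ij} \geq x_{i-1,j}$, the marked elements are $A = \{x_{in} : 1 \leq i \leq n\}$, and the marking sends $x_{in} \mapsto \lambda_i$. By construction, $\mathcal{O}(P, A)_{\lambda} = \mathrm{GT}(\lambda)$ as subsets of $\mathbb{R}^{\binom{n+1}{2}}$.

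Next, I would verify that $(P, A, \lambda)$ admits a bounded strongly planar embedding in the sense of Section \ref{sec:marked}. The triangular drawing of the GT pattern itself provides a natural planar embedding of the Hasse diagram of $P$: each bounded face is a diamond-shaped $4$-cycle coming from a pair of adjacent interlacing relations, and the marked elements $x_{1n}, x_{2n}, \ldots, x_{nn}$ all lie along the boundary of the unique unbounded face. Once the formal definitions of ``strongly planar'' and ``bounded'' are unpacked, this should be a routine check from the picture.

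Finally, applying Theorem \ref{thm:markedorderflow} produces an integral equivalence $\mathrm{GT}(\lambda) = \mathcal{O}(P, A)_\lambda \cong \mathcal{F}_{G_{(P, A, \lambda)}}$, after which one defines $G_\lambda := G_{(P, A, \lambda)}$. The main obstacle is reconciling the graph produced by this abstract construction with the specific $G_\lambda$ whose combinatorial data already appears in Theorem \ref{thm:gtvol} (with $|V(G_\lambda)| = \binom{n+2}{2} - 2$ vertices and source netflows of the form $\lambda_i - \lambda_{i+1}$); this requires careful bookkeeping of vertex and edge labels but no new combinatorial idea. An alternative direct route, essentially the one taken in \cite{paper1}, is to define for each relevant $(i, j)$ the non-negative differences $u_{ij} = x_{i-1,j-1} - x_{ij}$ and $v_{ij} = x_{ij} - x_{i-1,j}$, assign them to the edges of an appropriately chosen $G_\lambda$, and verify flow conservation directly from the telescoping identity $u_{ij} + v_{ij} = x_{i-1,j-1} - x_{i-1,j}$; the map $(x_{ij}) \mapsto (u_{ij}, v_{ij})$ is then manifestly an integral-equivalence onto $\mathcal{F}_{G_\lambda}$.
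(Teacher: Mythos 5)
Your proposal is correct and matches the paper's own treatment: the paper cites the direct coordinate proof from \cite{paper1} (your $u_{ij},v_{ij}$ alternative) and explicitly notes that Theorem \ref{thm:gtflowpolytope} is the special case of Theorem \ref{thm:markedorderflow} obtained by realizing $\mathrm{GT}(\lambda)$ as the marked order polytope of its strongly planar interlacing poset, exactly as in your main route (cf.\ Figure \ref{fig:gpalambdaexample}). No substantive difference to report.
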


Recall that two integral polytopes $\mathcal{P}$ in $\mathbb{R}^d$ and $\mathcal{Q}$ in $\mathbb{R}^m$ are \textbf{integrally equivalent}
if there is an affine transformation
$\varphi\colon\mathbb{R}^d \to \mathbb{R}^m$ whose restriction to
$\mathcal{P}$ is a bijection $\varphi\colon \mathcal{P} \to \mathcal{Q}$
that preserves the lattice, i.e., $\varphi$ is a
bijection between $\mathbb{Z}^d \cap \aff(\mathcal{P})$ and
$\mathbb{Z}^m \cap \aff(\mathcal{Q})$, where $\aff(\cdot)$ denotes affine span. The map $\varphi$ is called an  \textbf{integral equivalence}. Note that integrally equivalent polytopes have the same Ehrhart polynomials, and therefore the same volume.    

\medskip
 We now define the flow network $G_\lambda$, describing the graph and its associated netflow (see Remark \ref{rem:flownetwork}). For an illustration of $G_{\lambda}$, see  Figure \ref{Glambda}. 
\begin{definition}
	\label{def:Glambda}
	For a partition $\lambda\in\mathbb{Z}^n_{\geq 0}$ with $n\geq 2$, let $G_\lambda$ be defined as follows:
	
	\noindent If $n=1$, let $G_\lambda$ be a single vertex $v_{22}$ defined to have flow polytope consisting of one point, $0$. Otherwise, let $G_\lambda$ have vertices 
	\[V(G_\lambda) = \{v_{ij} \mid  2\leq i\leq j \leq n\}\cup\{v_{i,i-1} \mid  3\leq i\leq n+2\}\cup\{v_{i,n+1} \mid  3\leq i \leq n+1\} \]
	and edges 
	\begin{equation*}
		\begin{split}
		E(G_\lambda) &= \{(v_{ij},v_{i+1,j}) \mid 2\leq i\leq j\leq n \}\cup\{(v_{i,n+1},v_{i+1,n+1}) \mid 3\leq i \leq n+1 \}\\
		&\quad\cup\{(v_{ij},v_{i+1,j+1}) \mid  2\leq i\leq j\leq n \}\cup\{(v_{i,i-1},v_{i+1,i}) \mid  3\leq i \leq n+1 \}.
		\end{split}
	\end{equation*}
	The default netflow vector on $G_\lambda$ is as follows:
	\begin{itemize}
		\item To vertex $v_{2j}$ for $2\leq j \leq n$, assign netflow $\lambda_{j-1}-\lambda_{j}$.
		\item To vertex $v_{n+2,n+1}$, assign netflow $\lambda_{n}-\lambda_{1}$.
		\item To all other vertices, assign netflow $0$.
	\end{itemize}
	Given a flow on $G_\lambda$, denote the flow value on each edge $(v_{ij},v_{i+1,j})$ by $a_{ij}$, and denote the flow value on each edge $(v_{ij},v_{i+1,j+1})$ by $b_{ij}$.
\end{definition}

\begin{figure}[ht]
	\includegraphics[scale=.45]{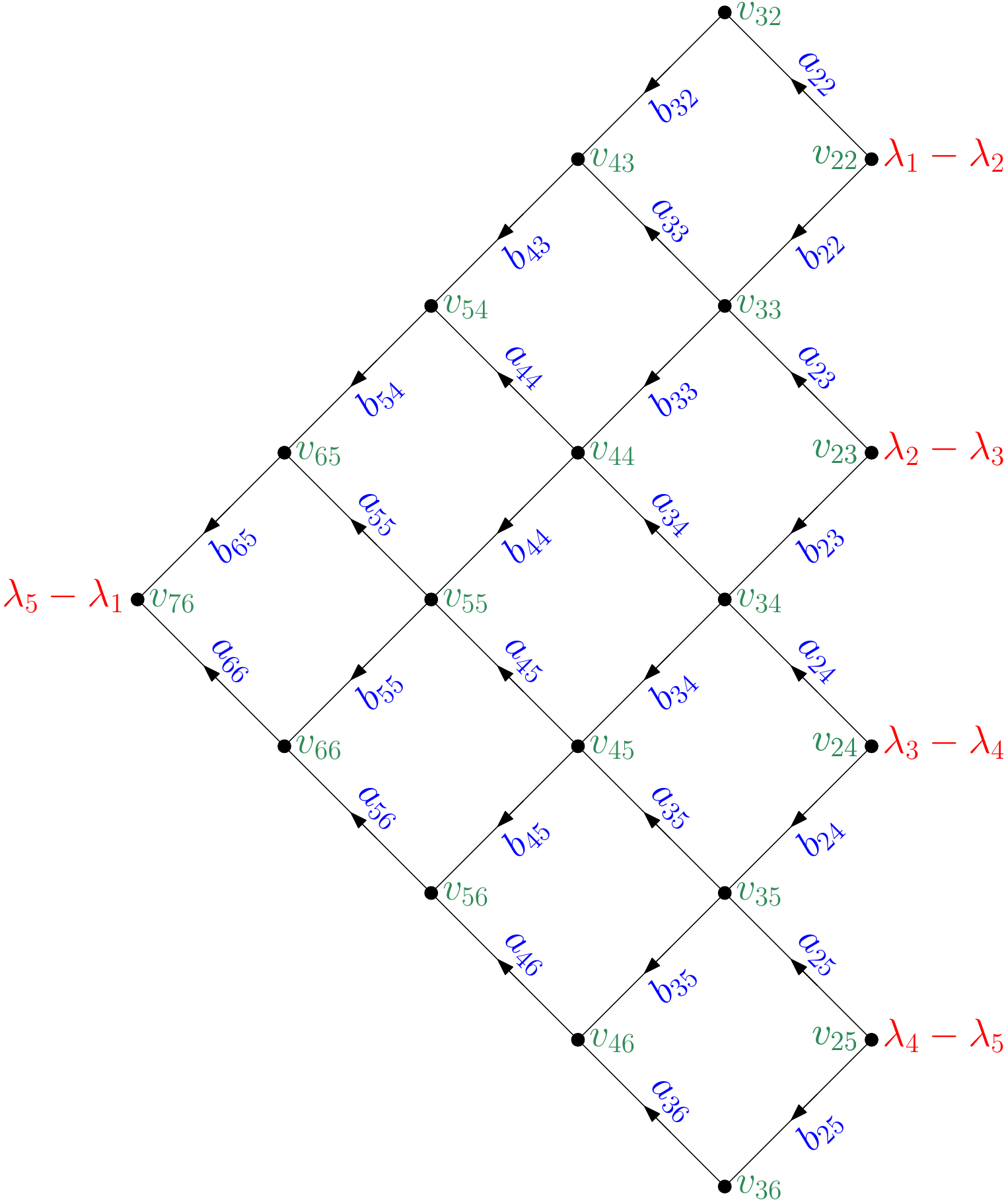}
	\caption{The flow network $G_\lambda$ with $\ell(\lambda)=5$.}
	\label{Glambda}
\end{figure}
  
  We note that viewing $\mathrm{GT}(\lambda)$ as a marked order polytope (\cite{GTFFLVmarked}), Theorem \ref{thm:gtflowpolytope} is also a special case of our more general Theorem \ref{thm:markedorderflow}.  
   
Several expressions for the volume and integer point count of $\mathrm{GT}(\lambda)$ are given in the following theorem. To  apply the Lidskii formulas to $G_\lambda$, we list the vertices of $G_\lambda$ in the following order: First are the vertices $\{v_{ij} \mid  2\leq i\leq j\leq n\}$ ordered lexicographically; next, the vertices $\{v_{i,i-1} \mid  3\leq i\leq n+1\}$ ordered lexicographically; then, the vertices $\{v_{i,n+1} \mid  3\leq i \leq n+1\}$ ordered lexicographically; and lastly, the sink vertex $v_{n+2,n+1}$. 

In (\ref{eqn:shiftedsytvol}) below, we need a few definitions: 
A shifted standard Young tableaux (shSYT) is a bijection 
$T\colon \left\{(i,j) \mid 1\leq i\leq j\leq n\right\} \to \left\{1,2,\ldots,\binom{n+1}{2} \right\}$
such that $T(i,j)<T(i+1,j)$ and $T(i,j)<T(i,j+1)$. The diagonal vector of a shSYT $T$ is $(T(1,1),T(2,2),\ldots,T(n,n)).$ Denote by $N(b_1,\ldots,b_{n-1})$ the number of shSYT $T$ with diagonal entries $T(i,i)=i+b_{1}+\cdots+b_{i-1}$.
\begin{namedtheorem}[\ref{thm:gtvol}]
	Let $\lambda\in\mathbb{Z}^n_{\geq 0}$ be a partition and $r=|V(G_\lambda)|-1=\binom{n+2}{2}-3$. The volume of $\mathrm{GT(\lambda)}$ is given by
	\begin{align*}
		\mathrm{Vol}\mbox{ }\mathrm{GT}(\lambda) &= \prod_{1\leq i<j\leq n} \frac{\lambda_i-\lambda_j}{j-i} \tag{\ref{eqn:weylvol}}\\  
		&=\sum_{b_1,\ldots,b_{n-1}\geq 0} \frac{(\lambda_1-\lambda_2)^{b_1}}{b_1!}\cdots \frac{(\lambda_{n-1}-\lambda_n)^{b_{n-1}}}{b_{n-1}!}N(b_1,\ldots,b_{n-1})\tag{\ref{eqn:shiftedsytvol}}\\
			&=\sum_{j}
		\frac{(\lambda_1-\lambda_2)^{j_1}}{j_1!}\cdots \frac{(\lambda_{n-1}-\lambda_n)^{j_{n-1}}}{j_{n-1}!} 
		K_{G_\lambda}\left(j_1-1, \ldots, j_{n-1}-1,-1,\ldots,-1,0,\ldots,0,0\right).\tag{\ref{eqn:lidskiivol}}
	\end{align*}
	
	\noindent The integer point count of $GT(\lambda)$ is given by 
	\begin{align*}
 		|{\rm GT}(\lambda)\cap \mathbb{Z}^{n+1 \choose 2}|&=\prod_{1\leq i<j\leq n}\frac{\lambda_i-\lambda_j+j-i}{j-i}\tag{\ref{eqn:weylintpts}}\\
		&= \sum_{{j}}
		\binom{\lambda_1-\lambda_2+1}{j_1}\cdots
		\binom{\lambda_{n-1}-\lambda_n+1}{j_{n-1}}
		\binom{1}{j_n}\cdots\binom{1}{j_{\binom{n}{2}}}\binom{0}{j_{\binom{n}{2}+1}}\cdots\binom{0}{j_{r}}\tag{\ref{eqn:lidskiiintpts}}\\
		&\quad\quad\quad \cdot  K_{G_\lambda}\left(j_1-1, \ldots, j_{n-1}-1,j_n-1,\ldots,j_{\binom{n}{2}}-1,j_{\binom{n}{2}+1},\ldots,j_r,0\right).
	\end{align*}
\end{namedtheorem}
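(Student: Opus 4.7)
The strategy is to treat the five equations of Theorem \ref{thm:gtvol} separately. Equations \eqref{eqn:weylintpts} and \eqref{eqn:weylvol} are representation-theoretic: the classical bijection between $\mathrm{GT}(\lambda)\cap\mathbb{Z}^{\binom{n+1}{2}}$ and a Gelfand--Tsetlin basis of the irreducible $GL(n)$-module $V_\lambda$ identifies the Ehrhart polynomial of $\mathrm{GT}(\lambda)$ with the Weyl dimension polynomial $\prod_{i<j}(\lambda_i-\lambda_j+j-i)/(j-i)$, whose top homogeneous component is \eqref{eqn:weylvol}. Equation \eqref{eqn:shiftedsytvol} is a direct rewriting of \cite[Theorem 15.1]{beyond}. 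This reduces the task to proving \eqref{eqn:lidskiivol} and \eqref{eqn:lidskiiintpts}, where the flow-polytope description is essential.

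For these two, the plan is to apply the Baldoni--Vergne--Lidskii formulas \eqref{eq:lidskiivolume} and \eqref{eq:lidskiikost} to $\mathcal{F}_{G_\lambda}$ and invoke Theorem \ref{thm:gtflowpolytope} together with the fact that integrally equivalent polytopes share the same volume and Ehrhart polynomial. The preparatory step is to read off from Definition \ref{def:Glambda} the outdegree and netflow of every vertex of $G_\lambda$. Each $v_{ij}$ with $2\le i\le j\le n$ has outdegree $2$; each $v_{i,i-1}$ for $3\le i\le n+1$ and each $v_{i,n+1}$ for $3\le i\le n+1$ has outdegree $1$; the sink $v_{n+2,n+1}$ is the unique vertex of outdegree $0$. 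The only nonzero netflows are $\lambda_{j-1}-\lambda_j$ at the $n-1$ sources $v_{2,j}$ for $2\le j\le n$. Under the vertex ordering fixed just before the theorem statement, this places the sources in positions $1,\dots,n-1$ with $\omega_i=1$ and $a_i=\lambda_i-\lambda_{i+1}$, the remaining $v_{ij}$-vertices in positions $n,\dots,\binom{n}{2}$ with $\omega_i=1$ and $a_i=0$, and the outdegree-$1$ vertices in positions $\binom{n}{2}+1,\dots,r$ with $\omega_i=0$ and $a_i=0$.

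Substituting these values into \eqref{eq:lidskiivolume} yields \eqref{eqn:lidskiivol}: whenever $a_i=0$ the factor $a_i^{j_i}/j_i!$ forces $j_i=0$, so the outer product collapses to $\prod_{i=1}^{n-1}(\lambda_i-\lambda_{i+1})^{j_i}/j_i!$, while the Kostant argument $(j_i-\omega_i)_i$ specializes coordinate-wise to $j_i-1$ on the sources, to $-1$ on positions $n,\dots,\binom{n}{2}$, and to $0$ on the remaining positions, matching the right-hand side exactly. The same substitution into \eqref{eq:lidskiikost} produces \eqref{eqn:lidskiiintpts}, since $\binom{a_i+\omega_i}{j_i}$ becomes $\binom{\lambda_i-\lambda_{i+1}+1}{j_i}$ on the sources, $\binom{1}{j_i}$ on the interior outdegree-$2$ vertices, and $\binom{0}{j_i}$ on the outdegree-$1$ vertices.

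The main obstacle is purely bookkeeping rather than conceptual: one has to tabulate outdegrees and netflows vertex by vertex and verify that the prescribed vertex ordering partitions the index set into exactly three blocks matching the positions at which \eqref{eqn:lidskiivol} and \eqref{eqn:lidskiiintpts} read $-1$, $0$, $\binom{1}{\cdot}$, or $\binom{0}{\cdot}$. A miscount in any block would shift the argument of $K_{G_\lambda}$ and destroy the identities, so the tabulation, though elementary, must be done with care.
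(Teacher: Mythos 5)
Your proposal is correct and follows the same route as the paper: the first three identities are quoted from Postnikov's work and the representation theory of $GL(n)$, and \eqref{eqn:lidskiivol} and \eqref{eqn:lidskiiintpts} are obtained by specializing the Baldoni--Vergne--Lidskii formulas to the flow network $G_\lambda$ via the integral equivalence of Theorem \ref{thm:gtflowpolytope}. Your vertex-by-vertex tabulation of outdegrees and netflows (sources in positions $1,\dots,n-1$ with $\om_i=1$; the remaining outdegree-$2$ vertices in positions $n,\dots,\binom{n}{2}$; the outdegree-$1$ vertices in positions $\binom{n}{2}+1,\dots,r$) is exactly the bookkeeping the paper leaves implicit, and it checks out.
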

\begin{proof}
	In \cite{beyond}, (\ref{eqn:weylvol}), (\ref{eqn:shiftedsytvol}), and (\ref{eqn:weylintpts}) are shown. Applying Theorem \ref{thm:lidskii} to the flow network $G_\lambda$ yields (\ref{eqn:lidskiivol}) and (\ref{eqn:lidskiiintpts}).
\end{proof}

\begin{corollary} 
	\label{cor:bij}
	Comparing the volume formulas (\ref{eqn:shiftedsytvol}) and (\ref{eqn:lidskiivol}), we obtain that
	\[
		N(b_1,\ldots,b_{n-1})=K_{G_\lambda}\left(b_1-1, \ldots, b_{n-1}-1,-1,\ldots,-1,0,\ldots,0,0\right)
	\]
	for all $b_1,\ldots,b_{n-1}\geq 0$.
\end{corollary}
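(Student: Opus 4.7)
The plan is to extract the corollary as a formal consequence of the two polynomial expressions for $\mathrm{Vol}\,\mathrm{GT}(\lambda)$ in (\ref{eqn:shiftedsytvol}) and (\ref{eqn:lidskiivol}), both of which compute the same volume. The first step is to pin down the structure of the sum in (\ref{eqn:lidskiivol}). Under the vertex ordering for $G_\lambda$ given just before the restatement of Theorem \ref{thm:gtvol}, the default netflow has $a_i = \lambda_i - \lambda_{i+1}$ at the first $n-1$ vertices $v_{22}, \ldots, v_{2n}$ and $a_i = 0$ at every later non-sink vertex, so the factor $a_i^{j_i}/j_i!$ in Theorem \ref{thm:lidskii} forces $j_i = 0$ for all $i \geq n$. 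Each of $v_{22},\ldots,v_{2n}$ has outdegree $2$, so $\om_i = 1$ for $i \le n-1$, producing the shifts $j_i - 1$; the outdegrees of the remaining vertices produce the trailing $-1$'s and $0$'s appearing in the arguments of $K_{G_\lambda}$. Hence (\ref{eqn:lidskiivol}) is genuinely a sum over $(b_1,\ldots,b_{n-1}) \in \mathbb{Z}_{\geq 0}^{n-1}$ in the same monomial basis as (\ref{eqn:shiftedsytvol}).

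Next, I would verify that both expansions are finitely supported and thus honest polynomials in the consecutive differences $x_i := \lambda_i - \lambda_{i+1}$. On the (\ref{eqn:shiftedsytvol}) side, $N(b_1,\ldots,b_{n-1})$ counts certain shifted standard Young tableaux of fixed shape and vanishes once $\sum b_i$ is too large; on the (\ref{eqn:lidskiivol}) side, $K_{G_\lambda}$ vanishes for arguments outside the cone spanned by the edge roots of $G_\lambda$. So both sides are honest polynomials in $x_1,\ldots,x_{n-1}$ expanded in the common basis $\{x_1^{b_1}\cdots x_{n-1}^{b_{n-1}}/(b_1!\cdots b_{n-1}!)\}$.

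Finally, the identity between the two polynomials holds for every partition $\lambda \in \mathbb{Z}^n_{\geq 0}$, hence on the Zariski-dense subset of $\mathbb{R}^{n-1}$ consisting of nonnegative integer vectors $(x_1,\ldots,x_{n-1})$, hence identically as polynomials. Comparing coefficients of $x_1^{b_1}\cdots x_{n-1}^{b_{n-1}}/(b_1!\cdots b_{n-1}!)$ yields the claimed equality. The only real obstacle is the bookkeeping in the first paragraph---confirming that only the first $n-1$ coordinates of the Lidskii index $j$ can be nonzero and that the $\om_i$ shifts for the later vertices of $G_\lambda$ are exactly what the fixed trailing $-1$'s and $0$'s encode---but this is a direct reading of Definition \ref{def:Glambda} rather than a substantive difficulty.
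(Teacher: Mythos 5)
Your proposal is correct and is essentially the paper's own argument: the paper proves Corollary \ref{cor:bij} precisely by observing that (\ref{eqn:shiftedsytvol}) and (\ref{eqn:lidskiivol}) are two expansions of $\mathrm{Vol}\,\mathrm{GT}(\lambda)$ in the same basis $\frac{(\lambda_1-\lambda_2)^{b_1}}{b_1!}\cdots\frac{(\lambda_{n-1}-\lambda_n)^{b_{n-1}}}{b_{n-1}!}$ and matching coefficients, which is what you do (with the bookkeeping about the zero-netflow vertices and the $\om_i$ shifts, and the Zariski-density step, spelled out explicitly). The paper's subsequent bijective construction is offered only as an alternative, more refined proof, not as the proof of record.
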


One can also see Corollary~\ref{cor:bij} bijectively as follows. Given a shSYT $T$ counted by $N(b_1, \dots, b_{n-1})$, 
 define for $1 \leq i < j \leq n$:
\begin{align*}
a_{j-i+1,j} &= \left |\{(i',j') \mid T(i,j-1) < T(i',j') < T(i,j), \quad i' < i, \quad j' \geq j\}\right|,\\
b_{j-i+1,j} &= \left |\{(i',j') \mid T(i,j) < T(i',j') < T(i+1,j), \quad i' \leq i, \quad j' > j\}\right|.
\end{align*}
We claim that these define a flow on $G_\lambda$ with netflow $\left(b_1-1, \ldots, b_{n-1}-1,-1,\ldots,-1,0,\ldots,0,0\right)$.

\begin{itemize}
\item Note that $a_{ij} = 0$ for $i=j$, while $b_{ij} = 0$ for $j=n$. Thus there is no netflow at vertex $v_{ij}$ unless $2 \leq i \leq j \leq n$.
\item At $v_{2j}$, the netflow is $a_{2j}+b_{2j}$, which counts pairs $(i',j')$ such that $T(j-1,j-1)<T(i',j') < T(j-1,j)$ or $T(j-1,j)<T(i',j')<T(j,j)$, of which there are $T(j,j) - T(j-1,j-1) - 2 = b_{j-1}-1$.
\item At any other vertex $v_{ij}$, the netflow is $a_{ij} + b_{ij} - a_{i-1,j} - b_{i-1,j-1}$. But $a_{ij} + b_{ij}$ and $a_{i-1,j} + b_{i-1,j-1}$ both count pairs $(i',j')$ such that $T(j-i+1,j-1) < T(i',j') < T(j-i+2,j)$ with the only difference being that $(i',j') = (j-i+1,j)$ is not counted by the first quantity but is counted by the second. It follows that the netflow is $-1$, as desired. 
\end{itemize}

For the inverse map, we can construct the shSYT $T$ inductively: by removing the vertices $v_{2j}$ and edges with flows $a_{2j}$ and $b_{2j}$ from $G_\lambda$, we arrive at the graph for a partition of length $n-1$ with netflow
\[(b_{22}+a_{23}-1, b_{23} + a_{24} - 1, \dots, b_{2,n-1}+a_{2n}-1, -1, \dots, -1, 0, \dots, 0).\]
By induction, we can construct from this a shSYT $T'$ with side length $n-1$ whose $i$th diagonal entry $T'(i,i)$ is
\begin{align*}
i + (b_{22} + a_{23}) + (b_{23} + a_{24}) + \cdots + (b_{2i}+a_{2,i+1})
&= i + (b_1-1) + (b_2-1) + \cdots + (b_{i-1}-1) + a_{2,i+1}\\
&= 1+b_1 + b_2 + \cdots + b_{i-1} + a_{2,i+1}.
\end{align*}
Hence \[1 + b_1 + b_2 + \cdots + b_{i-1} \leq T'(i,i) \leq b_1 + b_2 + \cdots + b_i.\]
Then let us modify $T'$ by adding $1$ to the entries $1, \dots, b_1$, adding $2$ to the entries $1 + b_1, \dots, b_1 + b_2$, and so forth, which in particular will add $i$ to $T'(i,i)$. We can then attach to $T'$ a new diagonal with entries $1$, $2+b_1$, $3+b_1+b_2$, \dots, which will yield a shSYT $T$ of side length $n$ with the desired diagonal entries. It is straightforward to check that these two maps are inverses of one another, completing the bijection.

We will provide a bijective proof of a generalization of Corollary \ref{cor:bij} in Section \ref{sec:bijection}, in the more general setting of strongly planar marked order polytopes.

 \section{Marked Order Polytopes}
\label{sec:marked}

In \cite{GTFFLVmarked}, Ardila, Bliem, and Salazar observed that the Gelfand-Tsetlin polytope is a section of an order polytope. Inspired, they introduced marked posets and marked poset polytopes, generalizing Stanley's notion of order and chain polytopes introduced in \cite{orderchainpolytopes}. In this section we give background on unmarked and marked order polytopes, and we explain the generalizations of several results from order polytopes to marked order polytopes.

\begin{definition}
	A \textbf{marked poset} $(P,A,\lambda)$ consists of a finite poset $P$, a subposet $A \subseteq P$ containing all its extremal elements, and an order-preserving map $\lambda\colon A\to\mathbb{R}$.
	We identify $(P,A,\lambda)$ with the \textbf{marked Hasse diagram} in which we label the elements $a \in A$ with $\lambda(a)$ in the Hasse diagram of $P$.
\end{definition}
 
\begin{definition} The \textbf{marked order polytope} of $(P,A,\lambda)$ is
	\[\mathcal{O}(P,A)_\lambda = \{x \in \mathbb{R}^{P} \mid
	x_p \leq x_q \textrm{ for } p < q \mbox{ in $P$ and } x_a=\lambda(a) \mbox{ for } a\in A\}.\] 
	Let $\widetilde{\mathcal{O}}(P,A)_\lambda$ denote $\mathcal{O}(P,A)_\lambda$ projected onto the coordinates $P\backslash A$.
\end{definition}

Stanley's construction of the \textbf{order polytope} $\mathcal{O}(P)$  \cite{orderchainpolytopes} is a special case of a marked order polytope. Given a finite poset $P$, add a new smallest and largest element to obtain $\widehat{P} = P \sqcup \{ \hat 0, \hat 1 \}$. Let $A = \{\hat 0, \hat 1 \}$ and $\lambda(\hat a)=a$. Then 
\[\mathcal{O}(P) = \widetilde{\mathcal{O}}(\widehat{P}, A)_{\lambda}.\]
 
In general, computing or finding a combinatorial interpretation for the volume of a polytope is a hard problem. Order polytopes are an especially nice class of polytopes whose volume has a combinatorial interpretation.

\begin{theorem}[Stanley \cite{orderchainpolytopes}] \label{thm:stop}
Given a poset $P$, we have that 
\begin{itemize}
\item[(i)] the vertices of $\mathcal{O}(P)$  are in bijection with
  characteristic functions of complements of order ideals of $P$, 
\item[(ii)]  the normalized volume of $\mathcal{O}(P)$  is $e(P)$, where
  $e(P)$ is the number of linear extensions of $P$, and
\item[(iii)] the Ehrhart polynomial $L_{\mathcal{O}(P)}(m)$ of
  $\mathcal{O}(P)$ equals the order polynomial $\Omega(P,m+1)$ of $P$.
\end{itemize}
\end{theorem}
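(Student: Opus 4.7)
The plan is to prove the three parts in sequence, each via a short, self-contained argument.

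For part (i), I would first show that every vertex of $\mathcal{O}(P)$ has coordinates in $\{0,1\}$. If $x \in \mathcal{O}(P)$ has some coordinate with value $c \in (0,1)$, let $S = \{p \in P : x_p = c\}$ and perturb $x$ by $\pm\varepsilon$ on $S$. For $\varepsilon > 0$ sufficiently small, every strict inequality $x_p < x_q$ along covering relations of $P$ is preserved, so both perturbed vectors lie in $\mathcal{O}(P)$; since their midpoint is $x$, the point $x$ is not a vertex. Once vertices are known to be $\{0,1\}$-vectors, a vector $x \in \{0,1\}^P$ lies in $\mathcal{O}(P)$ if and only if $\{p : x_p = 1\}$ is upward-closed, i.e., a filter, so the vertices biject with filters of $P$, equivalently complements of order ideals.

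For part (ii), I would build the canonical order-polytope triangulation. To each linear extension $\pi = (p_1, \ldots, p_n)$ of $P$ (with $n = |P|$), associate the simplex
\[\Delta_\pi = \{x \in \mathbb{R}^P : 0 \leq x_{p_1} \leq x_{p_2} \leq \cdots \leq x_{p_n} \leq 1\}.\]
Then $\Delta_\pi \subseteq \mathcal{O}(P)$ (since $p_i < p_j$ in $P$ forces $i < j$, hence $x_{p_i} \leq x_{p_j}$), and $\Delta_\pi$ has Euclidean volume $1/n!$. Its vertices are the characteristic vectors of the suffixes $\{p_i, \ldots, p_n\}$, each a filter and thus a vertex of $\mathcal{O}(P)$ by (i). To show the $\Delta_\pi$ triangulate $\mathcal{O}(P)$ I would verify: (a) every $x \in \mathcal{O}(P)$ lies in some $\Delta_\pi$, obtained by sorting the coordinates of $x$ into weakly increasing order with ties broken consistently with $P$; (b) the interiors of distinct $\Delta_\pi$ are disjoint, since a point with all coordinates distinct determines $\pi$ uniquely. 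Summing Euclidean volumes gives $\mathrm{Vol}\,\mathcal{O}(P) = e(P)/n!$, i.e., normalized volume $e(P)$.

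For part (iii), I would identify the lattice points of $m \cdot \mathcal{O}(P)$ with order-preserving maps: an integer vector $f \in \mathbb{Z}^P$ lies in $m \cdot \mathcal{O}(P)$ iff $0 \leq f(p) \leq m$ and $f(p) \leq f(q)$ whenever $p < q$ in $P$, which is exactly the condition that $f$ is an order-preserving map $P \to \{0, 1, \ldots, m\}$. Shifting the target up by one converts these into order-preserving maps $P \to \{1, \ldots, m+1\}$, which by definition number $\Omega(P, m+1)$.

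The main obstacle is part (ii): checking that the $\Delta_\pi$ form an honest triangulation rather than just a cover. The sort-and-break-ties step handles coverage, and distinctness of coordinates pins down $\pi$ uniquely, but one must additionally verify that the pairwise intersections $\Delta_\pi \cap \Delta_{\pi'}$ lie on coordinate hyperplanes $x_p = x_q$ and thus have measure zero, so that additivity of volume applies. Parts (i) and (iii) reduce to direct readings of the defining inequalities of $\mathcal{O}(P)$ and are routine once those inequalities are in hand.
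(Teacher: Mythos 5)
Your proof is correct and follows essentially the same route as the source: the paper states this theorem as a citation to Stanley and only sketches part (ii), and your triangulation by the simplices $\Delta_\pi$ indexed by linear extensions is exactly the canonical subdivision the paper describes via cutting $\mathcal{O}(P)$ with the hyperplanes $x_p=x_q$ for incomparable $p,q$. Parts (i) and (iii) are the standard direct arguments (vertices as indicator vectors of filters, lattice points of $m\mathcal{O}(P)$ as order-preserving maps into a chain of $m+1$ elements), so nothing further is needed.
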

 
 We now explain how Theorem \ref{thm:stop}  generalizes to the setting of marked order polytopes.  

For part (i), the vertices and facial structure of marked order polytopes are described by Pegel in \cite{markedfacestructure}. A point $x\in\mathcal{O}(P,A)_{\lambda}$ induces a partition $\pi_x$ of $P$ that is the transitive closure of the relation $p_1\sim_x p_2$ if $x_p=x_q$ and $p,q$ are comparable. A point $x\in\mathcal{O}(P,A)_\lambda$ is a vertex if and only if each block of $\pi_x$ contains a marked point. In the case of an unmarked order polytope $O(P)$, the blocks will be an order ideal and its complement, so the vertices are characteristic functions.

Part (ii) of  Theorem \ref{thm:stop} has a beautiful geometric justification: order polytopes have a canonical subdivision into $e(P)$ unimodular simplices. Consider $\mathcal{O}(P)$ cut with all hyperplanes of the form $x_p=x_q$ where $p,q\in P$ with $p$ and $q$ incomparable. The regions of this arrangement correspond to the ways of totally ordering the coordinates $x_p$ compatible with all inequalities of $\mathcal{O}(P)$, that is, linear extensions of $P$. Each region is defined by inequalities of the form $y_1\leq y_2\leq\cdots\leq y_{|P|}$ for $y_1,\ldots,y_{|P|}$ a permutation of $\{x_p\}_{p\in P}$, so each region is a simplex.

The following theorem generalizes part (ii) of  Theorem \ref{thm:stop} to marked order polytopes. For notational convenience, we will take a \textbf{linear extension} of a poset $P$ with $n$ elements to be an \textit{order-reversing} bijection $\sigma\colon[n]\to P$, so for example $\sigma(1)$ will be a maximal element of $P$. We will generally label the elements of $A$ as $\{p_1, p_2, \dots, p_k\}$ such that $\lambda(p_1) \geq \cdots \geq \lambda(p_k)$ (and, additionally, if $p_i > p_j$ in $P$, then $i < j$).

 \begin{theorem}  
	\label{thm:markedordervolume} (cf.  \cite[Theorem 3.2]{two})
	If $(P,A,\lambda)$ is a marked poset with marked elements $A=\{p_1,\ldots,p_k \}$ having markings $\lambda(p_1)\geq\cdots\geq\lambda(p_k)$ denoted $\lambda_1,\ldots,\lambda_k$, then
	\[\mathrm{Vol}\mbox{ }\widetilde{\mathcal{O}}(P,A)_{\lambda} = \sum_{a_{1},\dots,a_{k-1}\ge0}N_{P, A, \lambda}(a_{1},\dots,a_{k-1})\frac{(\lambda_{1}-\lambda_{2})^{a_{1}}}{a_{1}!}\cdot\cdots\cdot\frac{(\lambda_{k-1}-
		\lambda_{k})^{a_{k-1}}}{a_{k-1}!},\]
	where $N_{P,A,\lambda}(a_{1},\dots, a_{k-1})$ is the number of linear extensions of $P$ such that elements of $A=\{p_1, \ldots, p_k\}$  occur at positions $1,2+a_1,\ldots,k+a_1+\cdots+a_{k-1}$, respectively.
\end{theorem}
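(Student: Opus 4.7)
The plan is to generalize Stanley's simplicial subdivision of the order polytope $\mathcal{O}(P)$ to the marked setting. I would cut $\widetilde{\mathcal{O}}(P,A)_\lambda$ by all hyperplanes of the form $x_p = x_q$ with $p,q \in P$ incomparable. Each top-dimensional chamber of the resulting arrangement corresponds to a total order on the coordinates refining the partial order on $P$, i.e., to a linear extension $\sigma : [n] \to P$. The chamber is then the slice of the polytope defined by $x_{\sigma(1)} \geq x_{\sigma(2)} \geq \cdots \geq x_{\sigma(n)}$ together with the fixed values $x_{p_i} = \lambda_i$, and up to measure-zero overlaps these chambers tile $\widetilde{\mathcal{O}}(P,A)_\lambda$.

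Next I would single out which $\sigma$ contribute positive volume. Because $A$ contains every extremal element, one first checks that $\lambda_k \leq x_p \leq \lambda_1$ on the whole polytope, and moreover $p_1$ and $p_k$ are maximal and minimal in $P$ respectively (using the labeling convention $p_i > p_j \Rightarrow i < j$). If $\sigma$ places the marked elements at positions $j_1 < \cdots < j_k$ in an order inconsistent with the ordering $\lambda_1 \geq \cdots \geq \lambda_k$, or if $j_1 > 1$ or $j_k < n$, then the chamber is forced to lie in a hyperplane of the form $x_p = \lambda_i$ for some unmarked $p$ and thus has zero $(n-k)$-dimensional volume. The surviving chambers are therefore exactly the linear extensions placing $p_i$ at position $i + a_1 + \cdots + a_{i-1}$ for some nonnegative $(a_1, \ldots, a_{k-1})$ with $a_1 + \cdots + a_{k-1} = n-k$; these are counted by $N_{P,A,\lambda}(a_1,\ldots,a_{k-1})$.

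Inside such a chamber, the $a_i$ unmarked coordinates strictly between the positions of $p_i$ and $p_{i+1}$ are constrained only by $\lambda_i \geq y_1 \geq \cdots \geq y_{a_i} \geq \lambda_{i+1}$, a staircase simplex of volume $(\lambda_i - \lambda_{i+1})^{a_i}/a_i!$. Since the blocks between consecutive marked elements decouple, the chamber is a direct product of such simplices, with total volume $\prod_{i=1}^{k-1} (\lambda_i - \lambda_{i+1})^{a_i}/a_i!$. Summing over chambers and grouping by the gap vector $(a_1,\ldots,a_{k-1})$ then gives exactly the stated formula.

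The trickiest step will be verifying that the hyperplane cut genuinely yields a polyhedral subdivision whose chambers meet only along lower-dimensional faces, and handling the tie case $\lambda_i = \lambda_{i+1}$ consistently with the labeling convention for $A$. The tie case turns out to be harmless, since the factor $(\lambda_i - \lambda_{i+1})^{a_i}$ automatically kills any contribution with $a_i > 0$; the remaining chambers line up with the convention $p_i > p_j \Rightarrow i < j$, avoiding any double-counting. After this, the rest is a routine product-of-simplices volume computation.
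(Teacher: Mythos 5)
Your proposal is correct and follows essentially the same route as the paper: cut by the hyperplanes equating incomparable coordinates (and coordinates with marking values), identify the full-dimensional chambers with linear extensions placing the marked elements in the prescribed order, recognize each chamber as a product of staircase simplices of volume $(\lambda_j-\lambda_{j+1})^{a_j}/a_j!$, and sum. Your extra care with degenerate chambers and the tie case $\lambda_i=\lambda_{i+1}$ is consistent with (and slightly more explicit than) the paper's "if nonempty" treatment.
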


We note that when the markings $A$ are along a chain in the poset $P$, Stanley has shown the above theorem in his proof of a certain log-concavity conjecture which we explain below; see the proof of \cite[Theorem 3.2]{two}.  His proof can be generalized to the above setting. We provide the proof here for completeness and take a slightly different perspective via hyperplane cuts, much like Postnikov does in \cite{beyond}  for $\mathrm{GT}(\lambda)$.

\bigskip

\begin{proof}[Proof of Theorem \ref{thm:markedordervolume}.]
	Consider $\widetilde{\mathcal{O}}(P,A)_{\lambda}$ cut with all hyperplanes of the form $x_p=x_q$ or $x_p=\lambda(a)$, where $p,q\in P\backslash A$ with $p$ is incomparable with $q$, and $a\in A$ is incomparable with $p$. The regions of this arrangement correspond to the ways of totally ordering the coordinates $(x_p)_{p\in P}$ compatible with all inequalities and markings, that is, certain linear extensions of $P$. Let $\sigma\colon[n]\to P$ be a linear extension of $P$, say with $\sigma(i_j)=p_j$ for $j\in[k]$, $i_1<i_2<\cdots<i_k$. Since $A$ contains all minimal and maximal elements of $P$, note that $i_1=1$ and $i_k=n$. The associated region $\widetilde{\Pi}_{\sigma}$ in the subdivision is the projection of the region 
	\[
	\Pi_{\sigma}=\{x\in\mathbb{R}^P \mid  x_{\sigma(1)}\geq x_{\sigma(2)}\geq \dots\geq x_{\sigma(n)},\ x_{\sigma(i_{j})}=\lambda_{j}.\}
	\]
	onto the coordinates $\mathbb{R}^{P\backslash A}$. If nonempty, $\widetilde{\Pi}_\sigma$ is the direct product
	\[\widetilde{\Pi}_{\sigma}= 
	\prod_{j=1}^{k-1}\{\lambda_{j}\geq x_{\sigma(i_j+1)}\geq\cdots\geq x_{\sigma(i_{j+1}-1)}\geq \lambda_{j+1}\}\]
	where each term $\{\lambda_{j}\geq x_{\sigma(i_j+1)}\geq \cdots\geq x_{\sigma(i_{j+1}-1)}\geq \lambda_{j+1}\}$ is an $(i_{j+1}-i_j-1)$-dimensional simplex with volume $\frac{(\lambda_j-\lambda_{j+1})^{i_{j+1}-i_{j}-1}}{(i_{j+1}-i_{j}-1)!}$. Thus 
	\[
	\mbox{Vol }\widetilde{\Pi}_{\sigma}=\frac{(\lambda_{1}-\lambda_{2})^{a_{1}}}{a_{1}!}\cdots\frac{(\lambda_{k-1}-
		\lambda_{k})^{a_{k-1}}}{a_{k-1}!},\] where we set $a_j\coloneqq i_{j+1}-i_{j}-1$. Summing over all linear extensions $\sigma$ of $P$, we obtain
	\begin{align*}
	\mathrm{Vol}\mbox{ }\widetilde{\mathcal{O}}(P,A)_{\lambda} & = \sum_{\sigma \in \mathcal{L}(P)}\mathrm{Vol}\mbox{ }\widetilde{\Pi}_{\sigma}\\
	& = \sum_{a_{1},\dots,a_{k-1}\ge0}N_{P, A, \lambda}(a_{1},\dots,a_{k-1})\frac{(\lambda_{1}-\lambda_{2})^{a_{1}}}{a_{1}!}\cdots\frac{(\lambda_{k-1}-
		\lambda_{k})^{a_{k-1}}}{a_{k-1}!}.\qedhere
	\end{align*}
\end{proof}

 Marked order polytopes also enjoy a Minkowski sum property and decomposition.
\begin{theorem}[\cite{chainorderpolytopes}]
	Let $P$ be a poset and $A$ a subposet. If $\lambda,\mu\colon A\to\mathbb{R}$ are markings, then \[\mathcal{O}(P,A)_{\lambda+\mu} =\mathcal{O}(P,A)_{\lambda}+ \mathcal{O}(P,A)_{\mu}. \] 
\end{theorem}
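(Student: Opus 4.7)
The plan is to verify both containments. The easy inclusion $\mathcal{O}(P,A)_\lambda + \mathcal{O}(P,A)_\mu \subseteq \mathcal{O}(P,A)_{\lambda+\mu}$ I would check directly from the definitions: for any $x \in \mathcal{O}(P,A)_\lambda$ and $y \in \mathcal{O}(P,A)_\mu$, I have $(x+y)_a = \lambda(a) + \mu(a) = (\lambda+\mu)(a)$ for every $a \in A$, while coordinatewise addition preserves monotonicity since $x_p + y_p \leq x_q + y_q$ whenever $p \leq q$ in $P$. Thus $x + y \in \mathcal{O}(P,A)_{\lambda+\mu}$.

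The reverse inclusion is the substantive content: given $z \in \mathcal{O}(P,A)_{\lambda+\mu}$, I need to construct an explicit splitting $z = x + y$ with $x \in \mathcal{O}(P,A)_\lambda$ and $y \in \mathcal{O}(P,A)_\mu$. I would proceed by induction on $|P \setminus A|$. The base case $P = A$ is immediate since $z$ is then the single point $(\lambda + \mu)|_A$, so $x = \lambda|_A$ and $y = \mu|_A$ suffice. For the inductive step I would pick a maximal element $p^* \in P \setminus A$, choose a value $t = x_{p^*}$ together with $z_{p^*} - t = y_{p^*}$ so that the augmented markings $\lambda \cup \{p^* \mapsto t\}$ and $\mu \cup \{p^* \mapsto z_{p^*} - t\}$ remain order-preserving on $A \cup \{p^*\}$, and then invoke the inductive hypothesis on $(P, A \cup \{p^*\})$ with these extended markings to produce the remaining coordinates of $x$ and $y$.

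The key step I expect to be most delicate is showing that the admissible interval for $t$ is nonempty. The $\lambda$-constraints force
\[
\max_{a \leq p^*,\, a \in A} \lambda(a) \;\leq\; t \;\leq\; \min_{b \geq p^*,\, b \in A} \lambda(b),
\]
while the $\mu$-constraints, after substituting $\mu(c) = z_c - \lambda(c)$ for $c \in A$, become
\[
\max_{b \geq p^*,\, b \in A} \bigl(\lambda(b) + z_{p^*} - z_b\bigr) \;\leq\; t \;\leq\; \min_{a \leq p^*,\, a \in A} \bigl(\lambda(a) + z_{p^*} - z_a\bigr).
\]
Nonemptiness of the intersection of these two intervals then reduces to four families of pairwise inequalities between the max and the min expressions; each of them follows from the order-preservation of $\lambda$ and $\mu$ on $A$, combined with the chain $z_a \leq z_{p^*} \leq z_b$ for $a \leq p^* \leq b$ that is inherited from $z \in \mathcal{O}(P,A)_{\lambda+\mu}$. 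Once this feasibility is established, any choice of $t$ in the common interval completes the inductive step and hence the decomposition.
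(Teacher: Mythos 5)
The paper offers no proof of this statement --- it is quoted from \cite{chainorderpolytopes} --- so there is nothing internal to compare against; your attempt has to stand on its own. The easy inclusion and the reduction of the hard inclusion to the nonemptiness of an interval for $t$ are both set up correctly, and you have correctly located the delicate point. But that is exactly where the argument breaks. Of the four families of cross-inequalities between your lower and upper bounds, the two that compare a $\lambda$-bound with a $\mu$-bound coming from the \emph{same side} of $p^*$ do not follow from the stated hypotheses: for $a,a'\leq p^*$ in $A$ you need $\lambda(a)\leq \lambda(a')+z_{p^*}-z_{a'}$, i.e.\ $\lambda(a)+\mu(a')\leq z_{p^*}$, and when $a$ and $a'$ are incomparable, order-preservation of $\lambda$ and $\mu$ gives no control over this mixed sum (only the diagonal case $a=a'$ reduces to $z_a\leq z_{p^*}$). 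The symmetric family for two marked elements above $p^*$ fails in the same way; only the $\lambda$-vs-$\lambda$ and $\mu$-vs-$\mu$ comparisons actually follow from order-preservation.

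No repair is possible, because the statement in this unqualified form is false. Take $P=\{a,a',p,m\}$ with $a<p$, $a'<p$, $p<m$, and $A=\{a,a',m\}$ (so $A$ contains all extremal elements), with markings $\lambda(a)=1$, $\lambda(a')=0$, $\lambda(m)=1$ and $\mu(a)=0$, $\mu(a')=1$, $\mu(m)=1$; both are order-preserving. Then $\mathcal{O}(P,A)_{\lambda}$ and $\mathcal{O}(P,A)_{\mu}$ are each a single point with $p$-coordinate equal to $1$, so their Minkowski sum is the single point with $z_p=2$, while $\mathcal{O}(P,A)_{\lambda+\mu}$ is the segment $1\leq z_p\leq 2$. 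This is precisely the configuration your problematic inequality detects: two incomparable marked elements below an unmarked one. The decomposition does hold under additional hypotheses --- for instance when the marked elements lie along a chain, as in Stanley's setting \cite{two}, or more generally when the binding lower and upper bounds on each unmarked coordinate come from chains in $A$ --- and a correct proof must invoke such a hypothesis exactly at the feasibility step you flagged; as written, your claim that all four families ``follow from order-preservation'' is the gap.
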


\begin{corollary}
	\label{cor:markedorderminkowski}
	For $(P,A,\lambda)$ a marked poset with marked elements $A=\{p_1,\ldots,p_k \}$ having markings $\lambda(p_1)\geq \cdots\geq \lambda(p_k)$, let $\omega_i\colon A\to \mathbb{R}$ be the map such that $\omega_i(p_j)=1$ for $j\leq i$ and $\omega_i(p_{j})=0$ if $j>i$. Then, taking $\lambda(p_{k+1})$ to mean $0$, $\mathcal{O}(P,A)_\lambda$ decomposes into the Minkowski sum	
	\[\mathcal{O}(P,A)_{\lambda} = \sum_{i=1}^k (\lambda(p_i)-\lambda(p_{i+1}))\mathcal{O}(P,A)_{\omega_i}. \]
\end{corollary}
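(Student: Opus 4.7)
The plan is to identify $\lambda$ as a nonnegative linear combination of the auxiliary markings $\omega_i$ and then invoke the Minkowski sum theorem stated immediately above, together with the elementary scaling identity $c\cdot \mathcal{O}(P,A)_{\omega} = \mathcal{O}(P,A)_{c\omega}$ for $c \geq 0$ (which follows directly from the defining inequalities of a marked order polytope).

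The first step is a purely arithmetic check: for any $j \in [k]$,
\[\sum_{i=1}^k (\lambda(p_i)-\lambda(p_{i+1}))\,\omega_i(p_j) = \sum_{i=j}^k(\lambda(p_i)-\lambda(p_{i+1})) = \lambda(p_j) - \lambda(p_{k+1}) = \lambda(p_j),\]
using the telescoping of the sum and the convention $\lambda(p_{k+1})=0$. Hence, as maps $A \to \mathbb{R}$, we have $\lambda = \sum_{i=1}^k (\lambda(p_i)-\lambda(p_{i+1}))\,\omega_i$.

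Before invoking the Minkowski theorem, I would verify that each $\omega_i$ is itself a valid marking, i.e.\ order-preserving. Under the labeling convention recalled just before Theorem~\ref{thm:markedordervolume}, if $p_j <_P p_l$ then $l < j$. Since $\omega_i$ is weakly decreasing in the index, $\omega_i(p_j) \leq \omega_i(p_l)$, so $\omega_i$ is order-preserving on $A$. Similarly the scalars $\lambda(p_i)-\lambda(p_{i+1})$ are nonnegative by the hypothesis $\lambda(p_1)\geq \cdots \geq \lambda(p_k)$ (for $i < k$) and by the implicit assumption $\lambda(p_k) \geq 0$ (for $i=k$), so all scalings are well-defined as Minkowski summands.

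Now I combine the pieces: by the scaling identity,
\[(\lambda(p_i)-\lambda(p_{i+1}))\,\mathcal{O}(P,A)_{\omega_i} = \mathcal{O}(P,A)_{(\lambda(p_i)-\lambda(p_{i+1}))\omega_i},\]
and then by iterating the Minkowski sum theorem,
\[\sum_{i=1}^k \mathcal{O}(P,A)_{(\lambda(p_i)-\lambda(p_{i+1}))\omega_i} = \mathcal{O}(P,A)_{\sum_{i=1}^k (\lambda(p_i)-\lambda(p_{i+1}))\omega_i} = \mathcal{O}(P,A)_{\lambda},\]
which is the desired identity. There is no real obstacle here: the only subtlety is the bookkeeping to ensure all coefficients are nonnegative and all $\omega_i$ are genuine order-preserving markings, both of which are immediate from the labeling conventions fixed earlier.
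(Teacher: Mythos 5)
Your proof is correct and is exactly the argument the paper intends: the corollary is stated without an explicit proof precisely because it follows from the telescoping identity $\lambda=\sum_{i=1}^k(\lambda(p_i)-\lambda(p_{i+1}))\omega_i$ together with the Minkowski sum theorem of \cite{chainorderpolytopes} and the scaling identity. Your additional checks (that each $\omega_i$ is order-preserving under the labeling convention and that the coefficients are nonnegative, including the implicit $\lambda(p_k)\geq 0$) are the right bookkeeping and match the paper's conventions.
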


\subsection{A Log-Concavity Result}

Recall that a sequence $b_0,b_1,\ldots,b_m$ of non-negative real numbers is said to be \textbf{log-concave} if $b_i^2\geq b_{i-1}b_{i+1}$ for $1\leq i\leq m-1$. In particular, a log-concave sequence is \textbf{unimodal}, that is for some $j$, we have $b_1\leq b_2\leq\cdots\leq b_j$ and $b_j\geq b_{j+1}\geq\cdots\geq b_m$. 

Using the Alexandrov-Fenchel inequalities and the volume formula for order polytopes, Stanley proved the following log-concavity result in the special case where all marked elements of $P$ lie on a chain in \cite{two}.

\begin{theorem}
	\label{thm:logconcave}
	Let $(P,A,\lambda)$ be a marked poset with marked elements $A=\{p_1,\ldots,p_k\}$ having markings $\lambda(p_1)\geq\cdots\geq\lambda(p_k)$ denoted $\lambda_1,\ldots,\lambda_k$. If $a_1,\ldots,a_k\geq 0$ with $a_{j-1}>1$ and $a_j>1$ for some $j$, then
	\[N_{P, A, \lambda}(a_{1},\dots,a_{k-1})^2\geq N_{P, A, \lambda}(a_{1},\dots,a_{j-1},a_{j}-1,a_{j+1},\ldots,a_{k-1})
	N_{P, A, \lambda}(a_{1},\dots,a_{j-1},a_{j}+1,a_{j+1},\ldots,a_{k-1}).\]
\end{theorem}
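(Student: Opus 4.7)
The plan is to observe a vanishing property of $N_{P,A,\lambda}$ and deduce the inequality immediately. Specifically, I claim that $N_{P,A,\lambda}(a_1,\ldots,a_{k-1})=0$ unless $a_1+\cdots+a_{k-1}=d$, where $d:=|P|-k=\dim\widetilde{\mathcal{O}}(P,A)_\lambda$. Granted this, the two right-hand factors in the stated inequality have parameter sums $d-1$ and $d+1$ respectively, so both vanish; the right-hand side is $0$, the left-hand side is $N(\mathbf{a})^2\ge 0$, and the inequality holds.

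To prove the vanishing, I would argue that $p_k$ must be a minimum of $P$. Since $A$ contains every extremal element of $P$, if some $x<p_k$ in $P$ existed, one could descend in $P$ from $x$ to reach a $P$-minimum $y\le x$ which must lie in $A$. Writing $y=p_i$, the convention ``$p_i>p_j$ in $P$ implies $i<j$'' applied to $p_k>p_i$ forces $k<i$, contradicting $i\in[k]$. Hence $p_k$ is a $P$-minimum, so in any order-reversing linear extension $\sigma\colon[n]\to P$ we have $\sigma(n)=p_k$, forcing the position $k+a_1+\cdots+a_{k-1}$ of $p_k$ to equal $n$; hence $\sum a_i=d$. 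Alternatively, one reads the vanishing directly off Theorem \ref{thm:markedordervolume}: $\mathrm{Vol}\,\widetilde{\mathcal{O}}(P,A)_\lambda$ is homogeneous of degree $d$ in the differences $\mu_i=\lambda_i-\lambda_{i+1}$, so the coefficient of any monomial $\prod\mu_i^{a_i}$ with $\sum a_i\ne d$ must vanish, forcing $N(\mathbf{a})=0$.

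The main subtlety is that the stated inequality is thus vacuous, since both right-hand factors vanish regardless of the hypothesis $a_{j-1},a_j>1$. The substantive log-concavity one expects in the spirit of Stanley \cite{two} is the sum-preserving variant
\[
N(\mathbf{a})^2\ge N(\ldots,a_{j-1}+1,a_j-1,\ldots)\cdot N(\ldots,a_{j-1}-1,a_j+1,\ldots)
\]
(valid for $a_{j-1},a_j\ge 1$), which is an immediate consequence of the Alexandrov-Fenchel inequality. Identifying $N_{P,A,\lambda}(\mathbf{a})=d!\cdot V(K_1^{[a_1]},\ldots,K_{k-1}^{[a_{k-1}]})$ with $K_i:=\widetilde{\mathcal{O}}(P,A)_{\omega_i}$ on the slice $\sum a_i=d$ --- by comparing Theorem \ref{thm:markedordervolume} with the Minkowski-sum mixed-volume expansion from Corollary \ref{cor:markedorderminkowski} --- one applies $V(K,L,M_3,\ldots,M_d)^2\ge V(K,K,M_3,\ldots)V(L,L,M_3,\ldots)$ with $K=K_{j-1}$, $L=K_j$, and the remaining slots encoding $a_{j-1}-1$ copies of $K_{j-1}$, $a_j-1$ copies of $K_j$, and $a_i$ copies of $K_i$ for $i\notin\{j-1,j\}$, yielding the sum-preserving inequality in a single step.
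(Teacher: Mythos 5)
Your reading is essentially correct, and the substantive half of your proposal is the paper's own argument. The paper proves the theorem exactly as in your last paragraph: Corollary \ref{cor:markedorderminkowski} gives the Minkowski decomposition, expanding its volume into mixed volumes and comparing with Theorem \ref{thm:markedordervolume} yields $N_{P,A,\lambda}(a_1,\dots,a_{k-1})=(|P|-k)!\,V\bigl(\widetilde{\mathcal{O}}(P,A)_{\omega_1}^{a_1},\dots,\widetilde{\mathcal{O}}(P,A)_{\omega_{k-1}}^{a_{k-1}}\bigr)$, and then Alexandrov--Fenchel is invoked. Since Alexandrov--Fenchel trades one copy of $K$ for one copy of $L$ while keeping the total number of arguments equal to the dimension $d=|P|-k$, what that argument actually delivers is the sum-preserving inequality you wrote down (log-concavity in the position of $p_j$ with the other marked positions fixed, which is the form of Stanley's result in \cite{two}), not the displayed inequality. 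You are also right that the displayed inequality is degenerate: $N_{P,A,\lambda}(\mathbf{a})$ vanishes unless $a_1+\cdots+a_{k-1}=d$, so the two right-hand factors, whose parameter sums differ by $2$, cannot both be nonzero and the right side is always $0$. One small repair to your combinatorial derivation of this vanishing: when $P$ has several minimal elements it is not true that $\sigma(n)=p_k$ for every order-reversing linear extension; rather, $\sigma(n)$ is always some marked $p_i$, and since its prescribed position $i+a_1+\cdots+a_{i-1}$ is at most $k+a_1+\cdots+a_{k-1}\le n$, equality forces $i=k$ and $\sum a_i=n-k$ for any extension counted by $N$. (Your alternative argument via homogeneity of degree $d$ of the volume polynomial sidesteps this entirely.) In short: your proof of the literal statement is valid, if anticlimactic; your diagnosis that the intended statement is the sum-preserving variant is correct; and your proof of that variant coincides with the paper's Minkowski-sum/mixed-volume/Alexandrov--Fenchel route.
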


Before proving Theorem \ref{thm:logconcave}, we give some background on the theory of mixed volumes and the Alexandrov-Fenchel inequalities following that of \cite{two}. If $K_1,\ldots, K_s$ are convex bodies (nonempty compact convex subsets) of $\mathbb{R}^n$, fix weights $r_1,\ldots,r_s\geq 0$ and let $K$ denote the Minkowski sum
\[K=r_1K_1+\cdots+r_sK_s=\{r_1t_1+\cdots+r_st_s \mid t_i\in K_i \}. \]
The volume $V(K)$ of $K$ is a homogeneous polynomial of degree $n$ in $r_1,\ldots,r_s$
\[V(K) = \sum_{i_1=1}^s\sum_{i_2=1}^s\cdots \sum_{i_n=1}^s V_{i_1,\ldots,i_n} r_{i_1}\cdots r_{i_n}. \]
The coefficients $V_{i_1,\ldots,i_n}$ are uniquely determined by requiring they be symmetric up to permutations of subscripts. The coefficient $V_{i_1,\ldots,i_n}$ depends only on $K_{i_1},\ldots,K_{i_n}$ and is called the \textbf{mixed volume} of $K_{i_1},\ldots,K_{i_n}$. If we write $V(K_1^{a_1},\ldots,K_{s}^{a_s})$ for 
\[V\left(\underbrace{K_1,\ldots,K_1}_{a_1}, \ldots , \underbrace{K_s,\ldots,K_s}_{a_s}\right), \]
then 
\[V(K)=\sum_{a_1+\cdots+a_s=n}\binom{n}{a_1,\ldots,a_s}V\left(K_1^{a_1},\ldots,K_s^{a_s}\right)r_1^{a_1}\cdots r_s^{a_s}. \]
The well-known result about mixed volumes needed for the proof of Theorem \ref{thm:logconcave} is the following.
 
\begin{theorem}[Alexandrov-Fenchel Inequalities, \cite{log-conc}]
	Given $0 \leq m \leq n$ and convex bodies $C_1,\ldots,C_{n-m},K,L\subseteq\mathbb{R}^n$, the sequence $(b_0,b_1,\ldots,b_m)$ defined by 
	\[b_i=V\left(C_1,\ldots,C_{n-m},K^{m-i},L^i\right) \]
	is log-concave.
\end{theorem}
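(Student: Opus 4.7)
The strategy is to reduce the full log-concavity statement to a single fundamental quadratic inequality, and then prove that inequality by induction on dimension. First I would observe that log-concavity of $(b_0, b_1, \ldots, b_m)$ is equivalent to the three-term inequalities $b_i^2 \geq b_{i-1}b_{i+1}$ for $1 \leq i \leq m-1$; treating $K^{m-i-1}$ and $L^{i-1}$ as fixed background arguments and relabeling them into the $C_j$'s, each such inequality reduces to the \textbf{fundamental Alexandrov--Fenchel inequality}
\[
V(D_1, \ldots, D_{n-2}, K, L)^2 \geq V(D_1, \ldots, D_{n-2}, K, K) \cdot V(D_1, \ldots, D_{n-2}, L, L),
\]
valid for arbitrary convex bodies $D_1, \ldots, D_{n-2}, K, L \subseteq \mathbb{R}^n$. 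So it is enough to prove this single quadratic inequality.

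Next I would reduce to the polytopal setting. Since $V(K_1, \ldots, K_n)$ is continuous in each argument with respect to the Hausdorff metric, and since convex polytopes (equivalently, smooth strongly convex bodies) are dense in the space of convex bodies, it suffices to establish the fundamental inequality when all arguments are polytopes. One may further restrict to polytopes in sufficiently general position so as to avoid degenerate alignments of facet normals, which will simplify the combinatorial bookkeeping in the inductive step.

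The core step is an induction on $n$. Fix the background bodies $D_1, \ldots, D_{n-2}$ and regard $V(D_1, \ldots, D_{n-2}, \cdot, \cdot)$ as a symmetric bilinear form on the vector space of formal differences of support functions. The desired inequality becomes a reverse Cauchy--Schwarz statement --- equivalently, the assertion that this bilinear form has signature $(+, -, -, \ldots, -)$ (a single positive eigendirection, all others negative). Using the integral representation
\[
V(K_1, \ldots, K_n) = \frac{1}{n} \int_{S^{n-1}} h_{K_n}(u) \, dS(K_1, \ldots, K_{n-1}; u),
\]
the form can be realized as a self-adjoint operator acting on functions on $S^{n-1}$, whose spectral data are controlled by the lower-dimensional mixed volumes on the facets of the $D_i$; this is precisely where the inductive hypothesis enters.

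The main obstacle is unquestionably the spectral analysis in the inductive step. Pinning down the signature of the mixed-volume bilinear form is genuinely subtle, and several inequivalent routes exist in the literature: Alexandrov's original proof proceeds via an approximation argument using the Minkowski existence theorem together with a Monge--Amp\`ere-type operator; the Khovanskii--Teissier algebro-geometric approach deduces the signature from Hodge--Riemann relations on the cohomology of an associated toric variety; and Shenfeld and van Handel have more recently given an elementary, purely convex-geometric induction. I would lean toward the last of these in order to stay within the ambient framework, but any of the three is a substantial undertaking and requires delicate handling of the equality conditions and of the boundary degeneracies where lower-dimensional mixed volumes vanish.
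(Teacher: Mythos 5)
The paper does not actually prove this statement: it is quoted as a classical theorem and attributed to the literature (\cite{log-conc}), so there is no internal proof to compare against. Your opening reduction is correct and standard --- absorbing the fixed copies of $K$ and $L$ into the background bodies turns each three-term inequality $b_i^2 \geq b_{i-1}b_{i+1}$ into the fundamental quadratic inequality $V(D_1,\ldots,D_{n-2},K,L)^2 \geq V(D_1,\ldots,D_{n-2},K,K)\,V(D_1,\ldots,D_{n-2},L,L)$, and the continuity-plus-density argument reducing to polytopes (or to smooth strongly convex bodies) is likewise a legitimate standard step.

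From that point on, however, the proposal is a plan rather than a proof. The entire mathematical content of the Alexandrov--Fenchel theorem is the assertion you label ``the core step'': that the symmetric bilinear form $(K,L)\mapsto V(D_1,\ldots,D_{n-2},K,L)$ is hyperbolic, i.e., has exactly one positive eigendirection on the relevant space of (differences of) support functions. You correctly identify that this is where all the difficulty lies, list three inequivalent routes from the literature (Alexandrov's original argument via the Minkowski existence theorem and a Monge--Amp\`ere-type operator, the toric Hodge--Riemann approach of Khovanskii--Teissier, and the Shenfeld--van Handel convex-geometric induction), and then stop. None of these is carried out, and each requires substantial work you have not supplied: relating the spectrum of the operator on $S^{n-1}$ to lower-dimensional mixed volumes on facets, handling the null space coming from translations, establishing the positivity $V(D_1,\ldots,D_{n-2},K,K)>0$ needed to convert the signature statement into a reverse Cauchy--Schwarz inequality, and treating the degenerate cases where lower-dimensional mixed volumes vanish. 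So there is a genuine gap: the heart of the theorem is named but not proved. Since the paper itself invokes Alexandrov--Fenchel as a black box, citing it is the appropriate move here; a self-contained proof would require committing to one of the three routes and executing it in full.
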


We can now give the proof of Theorem \ref{thm:logconcave}.
\begin{proof}[Proof of Theorem \ref{thm:logconcave}]
	Corollary \ref{cor:markedorderminkowski} yields the Minkowski sum
	\[\widetilde{\mathcal{O}}(P,A)_{\lambda} = \sum_{i=1}^k (\lambda_i-\lambda_{i+1})\widetilde{\mathcal{O}}(P,A)_{\omega_i},\]
	so taking $\lambda_{k+1}=0$,
	\begin{align*}
	\mathrm{Vol}\mbox{ }\widetilde{\mathcal{O}}(P,A)_{\lambda} &= 
	\mathrm{Vol}\mbox{ }\sum_{i=1}^k (\lambda_i-\lambda_{i+1})\widetilde{\mathcal{O}}(P,A)_{\omega_i}\\
	&=\sum_{a_1+\cdots+a_{k-1}=|P|-k}\binom{|P|-k}{a_1,\ldots,a_{k-1}} V(\widetilde{\mathcal{O}}(P,A)^{a_1}_{\omega_1},\ldots,\widetilde{\mathcal{O}}(P,A)^{a_{k-1}}_{\omega_{k-1}})(\lambda_1-\lambda_2)^{a_1}\cdots(\lambda_{k-1}-\lambda_k)^{a_{k-1}}.
	\end{align*}
	Comparing this with the volume formula 
	\[\mathrm{Vol }\mbox{ }\widetilde{\mathcal{O}}(P,A)_{\lambda} = \sum_{a_{1},\dots,a_{k-1}\ge0}N_{P, A, \lambda}(a_{1},\dots,a_{k-1})\frac{(\lambda_{1}-\lambda_{2})^{a_{1}}}{a_{1}!}\cdot\cdots\cdot\frac{(\lambda_{k-1}-
		\lambda_{k})^{a_{k-1}}}{a_{k-1}!}\]
	of Theorem \ref{thm:markedordervolume}, we obtain
	\[N_{P, A, \lambda}(a_{1},\dots,a_{k-1}) = (|P|-k)!V(\widetilde{\mathcal{O}}(P,A)^{a_1}_{\omega_1},\ldots,\widetilde{\mathcal{O}}(P,A)^{a_{k-1}}_{\omega_{k-1}}). \]
	An application of the Alexandrov-Fenchel Inequality completes the proof.
\end{proof}

\section{Marked order polytopes as flow polytopes}
\label{sec:markedasflow}

In this section we prove that for strongly planar posets with special markings, the marked order polytopes are integrally equivalent to flow polytopes. This generalizes a result of M\'esz\'aros-Morales-Striker \cite[Theorem 3.14]{floworderpolytopes} for (unmarked) order polytopes, which we now review.

A poset $P$ is \textbf{strongly planar} if the Hasse diagram of $\widehat{P}\coloneqq P
\sqcup \{\hat{0}, \hat{1}\}$ is planar and can be drawn in the plane so that the  $y$-coordinates of vertices respect the order of $P$. When we refer to a \textbf{bounded embedding} of $P$, we will mean a strongly planar drawing of the Hasse diagram of $\widehat{P}$ with an additional two edges between $\hat{0}$ and  $\hat{1}$ added, one drawn to the left of $\widehat{P}$ and the other drawn to the right (see Figure \ref{fig:bspe}). We will view this embedding as a planar graph and discuss its (bounded) faces in the usual graph-theoretic sense.

\begin{figure}[ht]
	\includegraphics[scale=.8]{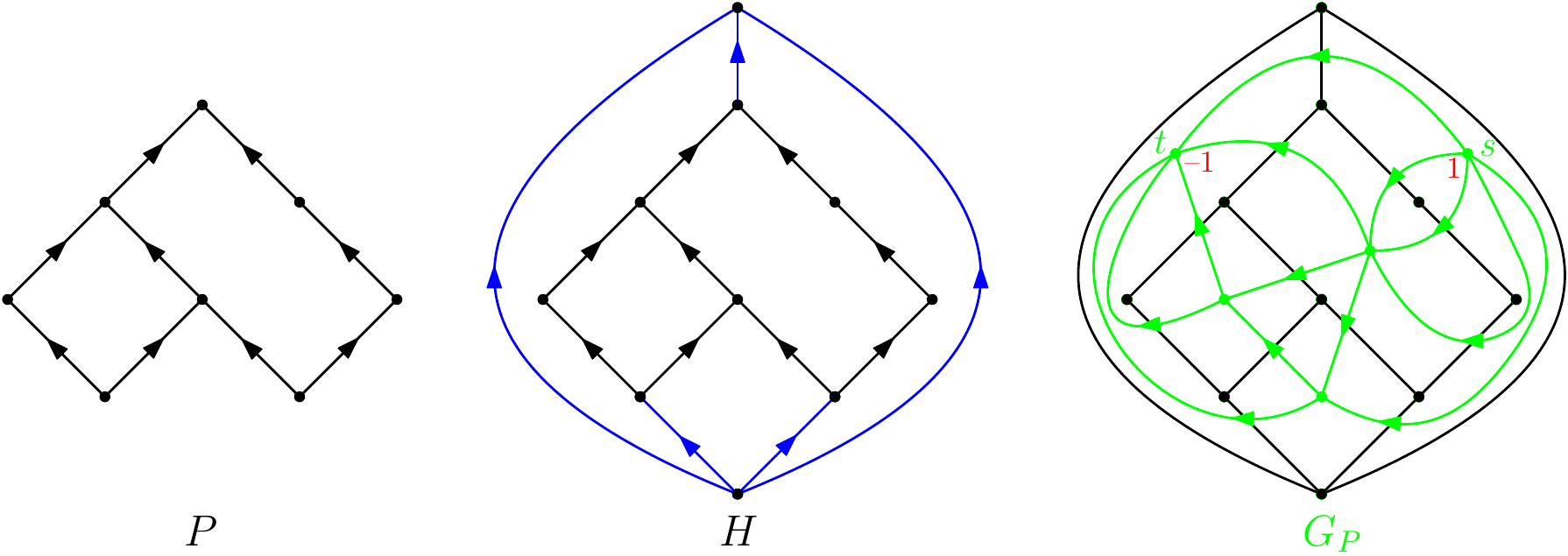}
	\caption{The Hasse diagram of a poset $P$ (left), a bounded embedding $H$ of $P$ (middle), and the directed graph $G_P$ drawn over $H$ (right).}
	\label{fig:bspe}
\end{figure}

We begin by recalling the case of order polytopes. 
Given a strongly planar poset $P$, let $H$ be a bounded embedding of $P$ (viewed as a planar graph). Let $H^*$ be the graph-theoretic dual of $H$. Define $G_P$ to be the subgraph of $H^*$ obtained by deleting the vertex corresponding to unbounded face of $H$. Denote the two vertices of $G$ that lie in faces of $H$ containing the edges $(\hat{0},\hat{1})$ by $s$ and $t$ with $s$ on the right and $t$ on the left.

Assign each edge $e$ in $G_P$ an orientation by the following rule: in the construction of $H^*$ the edge $e$ crosses an edge $p<q$ of $H$; orient $e$ so that while traversing $e$, $q$ is on the right and $p$ is on the left. Make $G_P$ into a flow network by assigning netflow $1$ to $s$, $-1$ to $t$, and $0$ to all other vertices.

\begin{theorem}[{\cite[Theorem 3.14]{floworderpolytopes}}]
	\label{thm:orderpolytopestronglyplanar}
	Let $P$ be a strongly planar poset and $G_P$ be the flow network constructed above. The polytopes $\mathcal{O}(P)$ and $\mathcal{F}_{G_P}$ are integrally equivalent.
\end{theorem}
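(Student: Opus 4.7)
The plan is to exhibit an explicit affine map $\varphi \colon \mathbb{R}^P \to \mathbb{R}^{E(G_P)}$ whose restriction to $\mathcal{O}(P)$ is an integral equivalence onto $\mathcal{F}_{G_P}$. Given $x \in \mathbb{R}^P$, I would extend it to $\widehat{P}$ by $\hat x_{\hat 0} := 0$, $\hat x_{\hat 1} := 1$, and $\hat x_p := x_p$ for $p \in P$. For each Hasse edge $(p,q)$ of $\widehat{P}$, let $e_{p,q}$ denote the corresponding dual edge in $G_P$, and define
\[
\varphi(x)_{e_{p,q}} := \hat x_q - \hat x_p.
\]
Since $x \in \mathcal{O}(P)$ forces $\hat x_p \le \hat x_q$ whenever $p < q$, all coordinates of $\varphi(x)$ are nonnegative.

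The core step is to verify that $\varphi(x)$ is a flow with netflow $+1$ at $s$, $-1$ at $t$, and $0$ elsewhere. Each vertex of $G_P$ corresponds to a bounded face $F$ of $H$, whose boundary splits into two ascending chains between a bottom element $p_F$ and a top element $q_F$. The orientation rule ``$q$ on the right, $p$ on the left'' translates to: the dual edge of $(p,q)$ points from the face on the right of $(p,q)$ (read $p \to q$) to the face on its left. Consequently, the dual edges crossing one chain are outgoing from $F$ while those crossing the other are incoming, and the sum of $\hat x_q - \hat x_p$ along each chain telescopes to $\hat x_{q_F} - \hat x_{p_F}$, so the in- and out-flows agree. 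The same telescoping at the faces containing $e_R$ and $e_L$ on their outer boundary yields net outflow $\hat x_{\hat 1} - \hat x_{\hat 0} = 1$ at $s$ and net inflow $1$ at $t$. I expect the main obstacle to be bookkeeping the orientation convention carefully, so that the left/right partition of each face's boundary lines up consistently with the orientations of the dual edges and nothing breaks at faces adjacent to $e_L$ or $e_R$.

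For bijectivity and lattice preservation, I would construct the inverse via a signed path sum: given $f \in \mathcal{F}_{G_P}$, set $\hat x_p$ to equal, along any path from $\hat 0$ to $p$ in $H$, the sum of $+f(e_{u,v})$ for each edge traversed $u \to v$ with $u < v$ and $-f(e_{u,v})$ otherwise. Independence of the chosen path follows because any closed cycle in $H$ is a $\mathbb{Z}$-linear combination of boundaries of bounded faces, each of which gives a vanishing signed sum by the flow conservation verified above; nonnegativity of $f$ then implies $\hat x$ is order-preserving, so its restriction to $P$ lies in $\mathcal{O}(P)$. Since $\varphi$ and its inverse are both given by integer-coefficient affine formulas, they identify $\mathbb{Z}^P \cap \mathcal{O}(P)$ with $\mathbb{Z}^{E(G_P)} \cap \mathcal{F}_{G_P}$; a dimension check $|E(G_P)| - |V(G_P)| + 1 = |V(H)| - 2 = |P|$ via Euler's formula on $H$ confirms that the affine spans match, completing the integral equivalence.
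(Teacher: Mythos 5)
Your proposal is correct and follows essentially the same route as the paper, which only sketches this argument (the map $f(e_{p,q})=\hat x_q-\hat x_p$ with $\hat x_{\hat 0}=0$, $\hat x_{\hat 1}=1$, inverted by summing flows along a path from $\hat 0$ to $p$), deferring details to \cite{floworderpolytopes}. Your telescoping verification of flow conservation on the left/right boundary chains of each face, the path-independence argument via the face-boundary generators of the cycle space, and the Euler-formula dimension count are exactly the omitted details, and they check out.
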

\begin{proof}[Proof sketch]
	The map from $\mathcal{O}(P)\to\mathcal{F}_{G_P}$ is given by $(x_p)_{p\in P}\mapsto f$ where $f(e)=x_q-x_p$ if $e$ crosses the edge $p<q$ in $H$ and $x_{\hat{0}}$, $x_{\hat{1}}$ are taken to be 0 and 1 respectively. For the inverse, take a flow $f$ on $G_P$. For each $p\in P$, choose any path in $H$ from $\hat{0}$ to $p$. To define $x_p$, sum the flow values $f(e)$ on each edge $e\in G_P$ crossing an edge in the chosen path from $\hat{0}$ to $p$ in $H$.
\end{proof}

We now generalize Theorem \ref{thm:orderpolytopestronglyplanar} to marked order polytopes. We begin with some terminology used to define the marked analogue of a strongly planar poset. If $F$ is a bounded face of a bounded embedding $H$ of $P$, let $p$ denote the minimum element of $F$ and let $q$ denote the maximum. The graph $F\backslash\{p,q\}$ has two components whose unions with $\{p,q\}$ we will call the \textbf{left and right boundaries} of $F$.  
 
 \begin{definition} \label{def:left}
	A marked poset $(P,A,\lambda)$ is called \textbf{strongly planar} if $P$ is strongly planar as an unmarked poset and admits a bounded embedding $H$ such that for each bounded face $F\subseteq \widehat{P}$ of $H$, if the left boundary of $F$ (including $\min(F)$ and $\max(F)$) contains a marked element, then both $\min(F)$ and $\max(F)$ are marked. We will call such an embedding a \textbf{bounded strongly planar embedding.}  \end{definition} 

\begin{remark} We note that in Definition \ref{def:left} we made a choice to put conditions on the markings on the left boundaries of bounded faces. Of course we could have put those conditions on the right boundaries instead. Moreover, as Remark \ref{rem:mix} explains, the definition can be relaxed by mixing and matching left and right boundaries of bounded faces under certain conditions in such a way that the main result, Theorem \ref{thm:markedorderflow}, still holds.
\end{remark}
 
For a bounded strongly planar embedding $H$ of the marked poset $(P,A,\lambda)$, we now construct a flow network $G_{(P,A,\lambda)}$ from $G_P$. Begin with a bounded strongly planar embedding $H$  and the flow network $G_P$ constructed from $H$, as in the case of order polytopes. View the markings $\lambda$ as being on $A$ inside of $H$, and add additional markings $\min\{\lambda(a) \mid  a \in A\}$ on $\hat{0}$ and $\max\{\lambda(a) \mid  a \in A\}$ on $\hat{1}$. 

Recall that each vertex of $G_P$ is naturally labeled by a bounded face of $H$. (In the rest of the paper, whenever we refer to a face of bounded strongly planar embedding $H$, we mean a bounded face.)  Denote the vertex labeled by a face $F$ by $v_F$. Starting from $G_P$, construct a flow network $G_{(P,A,\lambda)}$ by applying the following construction to $v_F$ for each (bounded) face $F$ of $H$. See Figure \ref{fig:gpaconstructionfaceexample} for an illustration of this construction.

If $F$ contains no marked elements on its left boundary, do nothing, and let $v_F$ continue to have netflow 0. Otherwise, suppose the left boundary of $F$ is composed of elements $p_1>\cdots>p_{k}$ in $H$, with $\min(F)=p_{k}$ and $\max(F)=p_1$.  Since some point on the left boundary of $F$ is marked, so are $p_1=q_1$ and $p_k=q_\ell$ by strong planarity. Suppose the marked elements among $p_1,\ldots,p_k$ are $p_1=p_{i_1}>p_{i_2}>\cdots>p_{i_g}=p_k$  marked by $a_1\geq a_2\geq \cdots\geq a_g$. Delete the edges outgoing from  vertex $v_F$ in $G_P$, and let $v_F$ become a sink with netflow $-(p_1-p_k)$, with the incoming edges as before.  The edges previously outgoing from $v_F$ in $G_P$ that crossed the left boundary of $F$ between marked elements $p_{i_m}$ and $p_{i_{m+1}}$ will now be outgoing from the source vertex $s_{m}^F$, for $m \in [g-1]$. Assign $s^F_m$ netflow $a_m-a_{m+1}$ for each $m$.

\begin{figure}[ht]
	\begin{center}
		\includegraphics[scale=1]{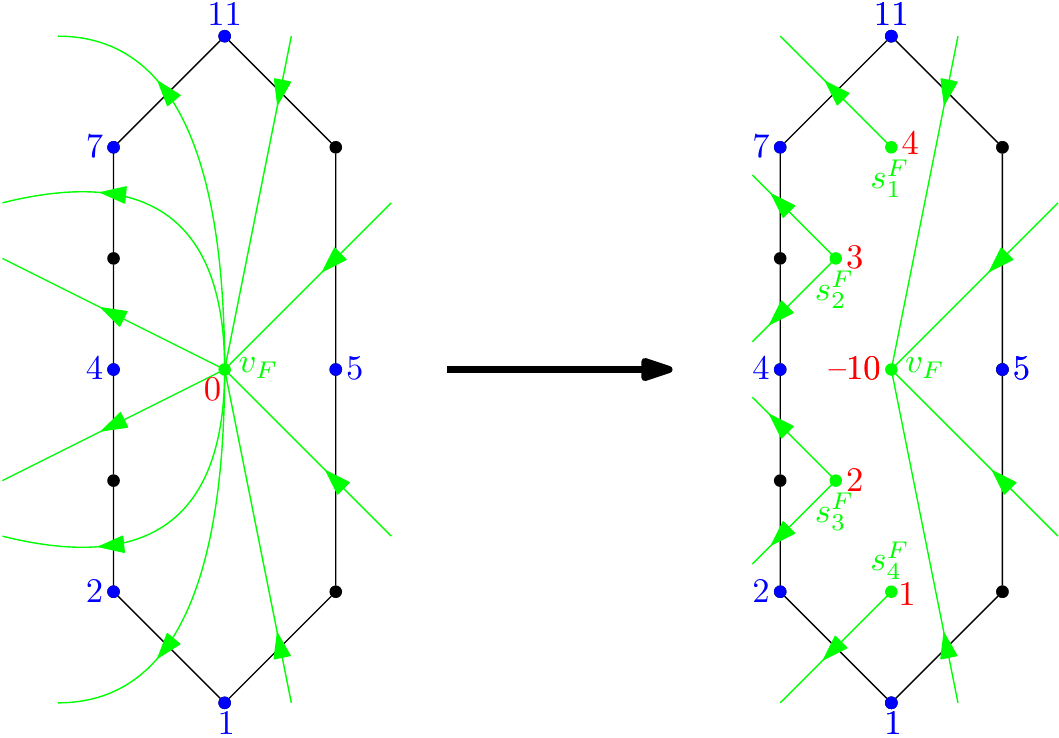}
	\end{center}
	\caption{An illustration of the construction of $G_{(P,A,\lambda)}$ from $G_P$ on a single face $F$.}
	\label{fig:gpaconstructionfaceexample}
\end{figure}

\begin{namedtheorem}[\ref{thm:markedorderflow}] 
	Given a bounded strongly planar embedding $H$ of a marked poset $(P,A,\lambda)$, the marked order polytope $\mathcal{O}(P,A)_{\lambda}$ is integrally equivalent to the flow polytope $\F_{G_{(P,A,\lambda)}}$, where $G_{(P,A,\lambda)}$ is the flow network described above.
\end{namedtheorem}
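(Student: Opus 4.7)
The plan is to adapt the proof of Theorem~\ref{thm:orderpolytopestronglyplanar} by exhibiting explicit mutually inverse maps between $\mathcal{O}(P,A)_\lambda$ and $\F_{G_{(P,A,\lambda)}}$ and then checking they form an integral equivalence. First I would define the forward map $\varphi$ by the same rule as in the unmarked case: extend $x \in \mathcal{O}(P,A)_\lambda$ to $\widehat{P}$ via $x_{\hat 0}=\min_A \lambda$ and $x_{\hat 1}=\max_A \lambda$, and for each dual edge $e$ of $G_{(P,A,\lambda)}$ crossing a Hasse edge $p<q$ of $H$, set $\varphi(x)(e)=x_q-x_p \geq 0$. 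Flow conservation would then be verified face by face. At a vertex $v_F$ whose face $F$ has no markings on its left boundary, both the incoming flow (across the right boundary) and the outgoing flow (across the left boundary) telescope to $x_{\max(F)}-x_{\min(F)}$, so the netflow at $v_F$ is $0$ as required. At a sink $v_F$ arising from a face $F$ with markings on its left boundary, $\max(F)$ and $\min(F)$ are marked by Definition~\ref{def:left}, so the incoming flow across the right boundary telescopes to $\lambda(\max(F))-\lambda(\min(F))$, matching the prescribed netflow $-(\lambda(\max(F))-\lambda(\min(F)))$. At each added source $s_m^F$, the outgoing edges cross exactly the arc of the left boundary of $F$ between consecutive marked elements $p_{i_m}$ and $p_{i_{m+1}}$, and telescope to $\lambda(p_{i_m})-\lambda(p_{i_{m+1}})=a_m-a_{m+1}$.

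The inverse map $\psi$ recovers $x$ from a flow $f\in\F_{G_{(P,A,\lambda)}}$ via the usual path-sum formula: for $p \in P$, choose any upward Hasse path $\gamma$ from $\hat 0$ to $p$ and set $\psi(f)_p = \lambda(\hat 0) + \sum_\gamma f(e^*)$, where $e^*$ denotes the dual edge crossing each primal edge of $\gamma$. Path-independence is the planar-duality computation familiar from the unmarked case: two such paths differ by a $1$-cycle that is a $\mathbb Z$-combination of face boundaries, and for each face $F$ the signed sum of $f(e^*)$ around $\partial F$ equals the net flow leaving the interior of $F$ in $G_{(P,A,\lambda)}$. By construction the sources $s_m^F$ inside $F$ inject $\sum_m(a_m-a_{m+1}) = \lambda(\max(F))-\lambda(\min(F))$ units of flow, precisely absorbed by $v_F$, so this quantity vanishes. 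Monotonicity $\psi(f)_q \geq \psi(f)_p$ whenever $p<q$ in $P$ then follows by extending a path to $p$ by the single edge $pq$ and using $f \geq 0$.

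The main obstacle is verifying $\psi(f)_a = \lambda(a)$ for every $a \in A$. I would handle this by walking along the left boundaries of faces: the bounded strongly planar hypothesis (Definition~\ref{def:left}) says that whenever a marked element lies on the left boundary of a face $F$, both $\min(F)$ and $\max(F)$ are also marked, so the marked elements of that left boundary decompose the chain into arcs whose dual-edge crossings are exactly the outgoing edges of a single source $s_m^F$, with flow total equal to $\lambda(p_{i_m})-\lambda(p_{i_{m+1}})$. Concatenating such arcs from $\hat 0$ upward along left boundaries to any $a \in A$ produces a Hasse path whose sum telescopes perfectly to $\lambda(a)-\lambda(\hat 0)$. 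The delicate point, and the reason Definition~\ref{def:left} is present, is to verify that every $a\in A$ is reachable from $\hat 0$ by a path composed entirely of such left-boundary arcs; once this is established, the well-definedness of $\psi$ propagates the identity to all upward paths. Finally, $\varphi$ and $\psi$ are mutually inverse by telescoping and are both given by integer-affine formulas, yielding the asserted integral equivalence.
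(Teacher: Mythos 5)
Your proposal follows essentially the same route as the paper: the same forward map $f(e)=x_q-x_p$, the same path-sum inverse, and the same key observation that every marked element is reachable from $\hat{0}$ along left boundaries of faces (which the paper states only in a parenthetical and which is exactly what justifies $\psi(f)_a=\lambda(a)$). Your write-up additionally supplies the face-by-face flow-conservation and path-independence checks that the paper explicitly leaves to the reader, so it is a correct, more detailed rendering of the paper's own argument.
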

\begin{proof}
	The integral equivalences between $\mathcal{O}(P,A)_\lambda$ and $\mathcal{F}_{G_{(P,A,\lambda)}}$ are exactly as in the order polytope case. The map $\Gamma\colon\mathcal{O}(P,A)_\lambda\to\mathcal{F}_{G_{(P,A,\lambda)}}$ is given by $(x_p)_{p\in P}\mapsto f$ where $f(e)=x_q-x_p$ if $e$ crosses the edge $p<q$ in $H$. The inverse map $\Gamma^{-1}$ is given by $x_p=\sum_{e} f(e)$ over edges $e\in G_{(P,A,\lambda)}$ crossing any fixed path from $\hat{0}$ to $p$ in $H$. (Note that from any marked point $p \in A$, there exists a path from $p$ to $\hat{0}$ in $H$ that only walks along the left boundaries of faces to the minimums of those faces.) The details of the proof are analogous to those in \cite{floworderpolytopes} and are left to the reader. 
\end{proof}

Theorem \ref{thm:markedorderflow} provides a general framework for obtaining the graphs $G_\lambda$ used in Theorem \ref{thm:gtflowpolytope}, and for proving Theorem \ref{thm:gtflowpolytope}. See Figure \ref{fig:gpalambdaexample} for an example of this.
	
\begin{figure}[ht]
	\begin{center}
		\includegraphics[scale=.75]{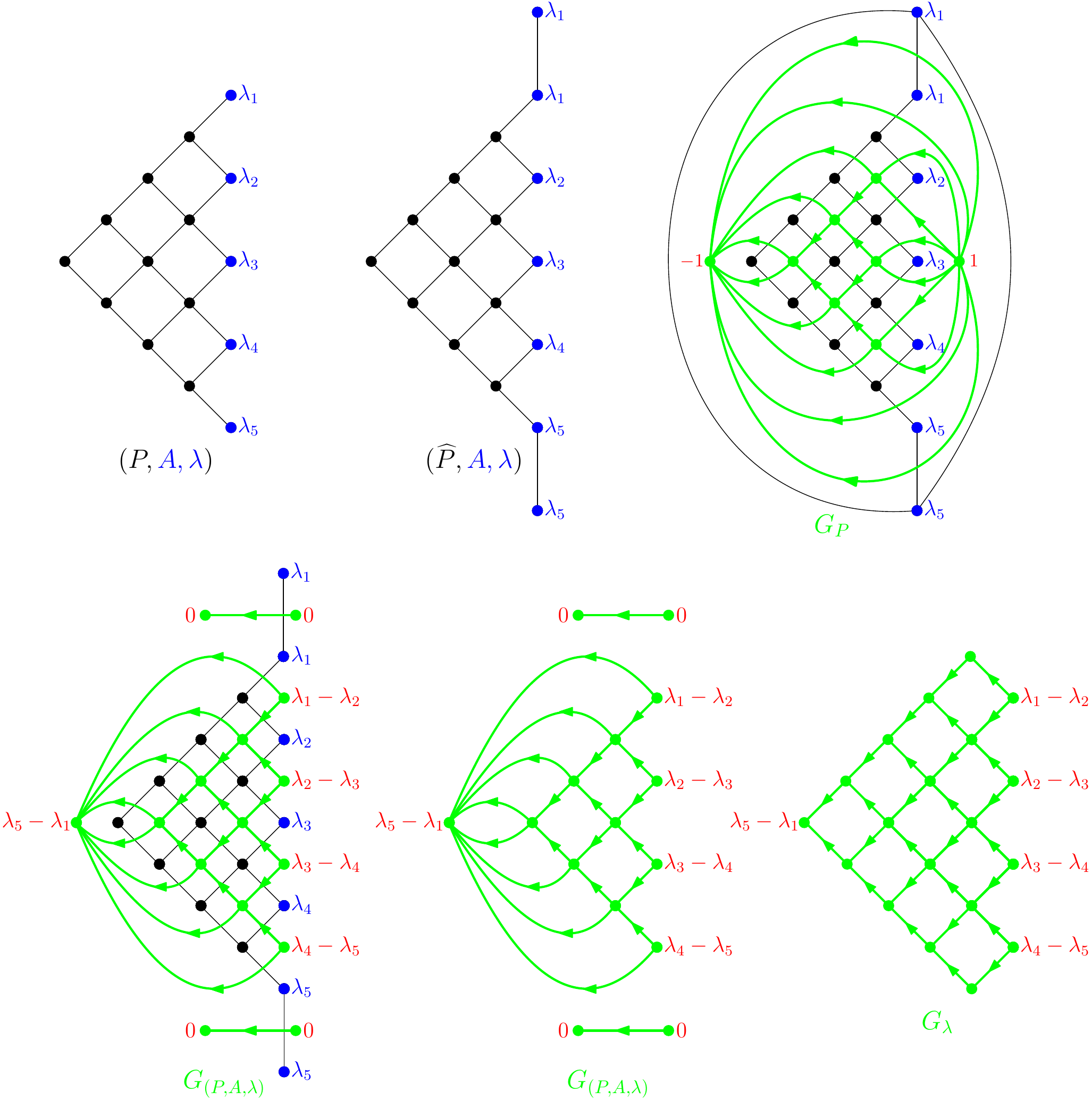}
	\end{center}
	\caption{Top row: the marked poset $(P,A,\lambda)$ with $\mathcal{O}(P,A)_\lambda=\mathrm{GT}(\lambda)$, $\widehat{P}=P\sqcup\{\hat{0},\hat{1}\}$, and the flow network $G_P$;
	Bottom row: the flow network $G_{(P,A,\lambda)}$ and the integrally equivalent flow network $G_\lambda$ of Definition \ref{def:Glambda}.}
	\label{fig:gpalambdaexample}
\end{figure}

 \medskip
 
 \begin{remark} \label{rem:mix}
Note that  Theorem \ref{thm:markedorderflow} can be generalized in various ways. We can obtain slightly different conditions on the markings of strongly planar posets under which the statement of Theorem \ref{thm:markedorderflow} as well as the map given in its proof are still correct. We picked the above particular definition for bounded strongly planar embeddings relying on conditions on the left boundaries of the bounded faces of the embedding as it seemed the least technical to state. We could have, of course, equally worked with right boundaries of the  bounded faces of the embedding, or,  we could mix and match  as to when we consider the left or right boundary of a bounded face as long as we ensure that the flow conditions pick up the restriction coming from two marked points that are comparable but do not lie in a common face.  Next we give an example of how relaxing the marking conditions in    Theorem \ref{thm:markedorderflow}  yields  that skew Gelfand-Tsetlin polytopes are flow polytopes.
\end{remark}

\begin{definition}
	Given partitions $\lambda,\mu\in\mathbb{Z}^n_{\geq 0}$ and $m\in\mathbb{N}$, the \textbf{skew Gelfand-Tsetlin polytope} $\mathrm{GT}(\lambda/\mu,m)$ is the set of all arrays
	\begin{center}
		\includegraphics[scale=1]{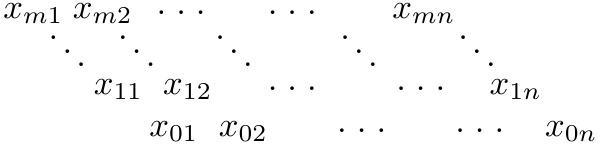}
	\end{center}
with top row $\lambda$ and bottom row $\mu$ such that $x_{ij}\geq x_{i-1,j}$ and $x_{ij}\geq x_{i+1,j+1}$.
\end{definition}
\begin{proposition}
	\label{prop:skewgtposet}
	Skew Gelfand-Tsetlin polytopes are marked order polytopes of strongly planar marked posets.
\end{proposition}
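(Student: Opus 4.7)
My plan is to realize $\mathrm{GT}(\lambda/\mu, m)$ as the marked order polytope of an explicit marked poset $(P, A, \Lambda)$ and then exhibit a bounded strongly planar embedding, invoking the mix-and-match relaxation of Remark~\ref{rem:mix}. I would take $P$ to be the poset on the positions $(i,j)$ of the skew GT array, with cover relations $(i-1,j) \lessdot (i,j)$ and $(i+1,j+1) \lessdot (i,j)$ dictated by the two defining inequalities $x_{ij} \geq x_{i-1,j}$ and $x_{ij} \geq x_{i+1,j+1}$; the marked subposet $A$ would consist of the top row (marked $\lambda_1, \ldots, \lambda_n$) together with the bottom row (marked $\mu_1, \ldots, \mu_n$). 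It is then routine to check that $A$ contains all extremal elements of $P$, that $\Lambda$ is order-preserving on $A$ (within each row the elements are incomparable, and each chain linking a top-row and a bottom-row element enforces the required inequality), and that by direct comparison of defining inequalities $\mathcal{O}(P, A)_\Lambda = \mathrm{GT}(\lambda/\mu, m)$.

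For the planar embedding, I would draw the Hasse diagram of $P$ in the plane by placing each position $(i,j)$ at its natural array location (with a small shear so that cover edges travel in two fixed upward directions). This embedding is manifestly planar, and the $y$-coordinates respect the partial order, so $P$ is strongly planar as an unmarked poset. Adjoining $\hat 0$ below $P$, $\hat 1$ above it, and the two side edges of the bounded embedding produces a planar graph $H$ whose bounded faces are the small rhombi interior to the array together with the boundary faces meeting $\hat 0$, $\hat 1$, or the side edges.

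The remaining step, and the main obstacle, is verifying the marking condition. Interior rhombi have no marked vertices and are vacuously fine; however, a rhombus adjacent to the top row has a marked $\max(F)$ with unmarked $\min(F)$, and symmetrically for bottom-adjacent rhombi, so the strict left-boundary form of Definition~\ref{def:left} fails for skew GT. To get around this I would invoke the mix-and-match relaxation described in Remark~\ref{rem:mix}: apply the condition along the right boundary for top-adjacent faces and along the left boundary for bottom-adjacent faces (or the symmetric choice), then confirm that the resulting flow network still captures every comparability between marked points---in particular, the inequalities linking a top-row and a bottom-row marking, which are realized by chains through interior elements of $P$ rather than by sharing a face. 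The hardest part is this case-by-case verification of the hybrid marking condition; once it is complete, $(P, A, \Lambda)$ is strongly planar in the required sense, and the proposition follows.
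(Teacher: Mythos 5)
Your construction is the same as the paper's: the poset on the array positions with cover relations given by the two defining inequalities $x_{ij}\geq x_{i-1,j}$ and $x_{ij}\geq x_{i+1,j+1}$, the top and bottom rows marked by $\lambda$ and $\mu$, and the array drawn (after a rotation) as a strongly planar Hasse diagram with $\mathcal{O}(P,A)_\Lambda=\mathrm{GT}(\lambda/\mu,m)$. Your extra scrutiny of the marking condition is warranted and correct: the strict left-boundary condition of Definition~\ref{def:left} does fail for the faces adjacent to the marked rows, and the paper itself only invokes the mix-and-match relaxation of Remark~\ref{rem:mix} in the subsequent corollary, so your treatment is if anything more careful than the paper's one-line assertion that the resulting marked poset is strongly planar.
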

\begin{proof}
	Given $\lambda$, $\mu$, and $m$, begin with a skew Gelfand-Tsetlin array $(x_{ij})_{i,j}$. Replace each entry $x_{ij}$ by a vertex, and each relation $x_{ij}\geq x_{i-1,j}$ or $x_{ij}\geq x_{i+1,j+1}$ by an edge between the corresponding vertices. Mark the top row of vertices with the corresponding entries of $\lambda$, and mark the bottom row of vertices with the corresponding entries of $\mu$. Rotate the graph 90 degrees clockwise. The result is the Hasse diagram of a strongly planar marked poset $(P,A,\lambda)$ with $\mathcal{O}(P,A,\lambda)=\mathrm{GT}(\lambda/\mu,m)$. See Figure \ref{fig:skewgt} for an example of this construction.
\end{proof}
\begin{corollary}
	Skew Gelfand-Tsetlin polytopes are integrally equivalent to flow polytopes.
\end{corollary}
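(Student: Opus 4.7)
The plan is to combine Proposition \ref{prop:skewgtposet} with the version of Theorem \ref{thm:markedorderflow} alluded to in Remark \ref{rem:mix}. Proposition \ref{prop:skewgtposet} already realizes $\mathrm{GT}(\lambda/\mu,m)$ as a marked order polytope $\mathcal{O}(P,A)_{\lambda}$ of a strongly planar marked poset, so what remains is to check that this marked poset admits an appropriate bounded strongly planar embedding and that the construction of the flow network $G_{(P,A,\lambda)}$ carries over. The integral equivalence then follows verbatim from the proof of Theorem \ref{thm:markedorderflow}.

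The first thing to observe is that after rotating the skew array 90 degrees clockwise as in the proof of Proposition \ref{prop:skewgtposet}, the marked set $A$ splits into two chains: one chain corresponding to the entries of $\lambda$ and a second chain corresponding to the entries of $\mu$. Inside the bounded embedding of $\widehat{P}$, one of these chains lies on the extremal left path from $\hat 0$ to $\hat 1$, and the other lies on the extremal right path. In particular, a face $F$ whose left boundary meets a marked vertex need not have $\min(F)$ and $\max(F)$ both marked, so the strict Definition \ref{def:left} fails. This is exactly the situation Remark \ref{rem:mix} is designed to handle: we are allowed, face by face, to choose whichever of the left or right boundary carries the markings of that face, and to apply the construction of $G_{(P,A,\lambda)}$ relative to that choice.

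Concretely, I would orient each bounded face so that its ``marked boundary'' is the one coming from $\lambda$ when such a boundary exists, and otherwise the one coming from $\mu$; outside a neighborhood of $\lambda$ and $\mu$ there are no markings at all, so the default rule (do nothing) applies. Reading off the source and sink data face by face produces a flow network, which plays the role of $G_{(P,A,\lambda)}$, whose netflows encode the successive differences $\lambda_i-\lambda_{i+1}$ and $\mu_i-\mu_{i+1}$. Then the maps
\[\Gamma\colon (x_p)_{p\in P}\mapsto \bigl(x_q - x_p\bigr)_{e\text{ crosses }p<q},\qquad \Gamma^{-1}\colon f\mapsto \Bigl(x_p = \sum_{e} f(e)\Bigr),\]
identical to those in the proof of Theorem \ref{thm:markedorderflow}, give the desired integral equivalence between $\mathcal{O}(P,A)_{\lambda}=\mathrm{GT}(\lambda/\mu,m)$ and this flow polytope.

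The main obstacle, and the only genuinely new content, is justifying the generalized version of Theorem \ref{thm:markedorderflow} needed here, since the proof in the text uses that a marked boundary always has its minimum and maximum marked in order for the face-by-face construction to produce a well-defined sink of netflow $-(p_1-p_k)$ together with sources whose netflows sum to this value. The corresponding check in the skew case is that, regardless of whether the marked boundary chosen for $F$ is the left or right one, flow conservation at the face vertex $v_F$ still records the telescoping identity $\sum_m(a_m - a_{m+1}) = \max(F) - \min(F)$ along that boundary; this is a local computation once the choice is made, and path-independence of $\Gamma^{-1}$ (that the sum $\sum_e f(e)$ does not depend on the chosen path from $\hat 0$ to $p$) reduces to checking it around a single face, where it is exactly this telescoping identity. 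With that in place, the corollary follows immediately.
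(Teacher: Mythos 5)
Your proposal is correct and follows the same route as the paper, whose entire proof is to apply the generalization of Theorem \ref{thm:markedorderflow} from Remark \ref{rem:mix} to the marked poset of Proposition \ref{prop:skewgtposet}. You supply more detail than the paper does --- in particular, the observation that the markings split into two chains on opposite extremal boundaries (so Definition \ref{def:left} fails literally and the left/right mix-and-match is genuinely needed), and the telescoping/path-independence check underlying the generalized theorem --- all of which is consistent with what Remark \ref{rem:mix} asserts informally.
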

\begin{proof}
	Apply the generalization of Theorem \ref{thm:markedorderflow} explained in Remark \ref{rem:mix} to the poset constructed in Lemma \ref{prop:skewgtposet}. See Figure \ref{fig:skewgt} for an example of the resulting flow network.
\end{proof}

\begin{figure}[ht]
	\includegraphics[scale=.75]{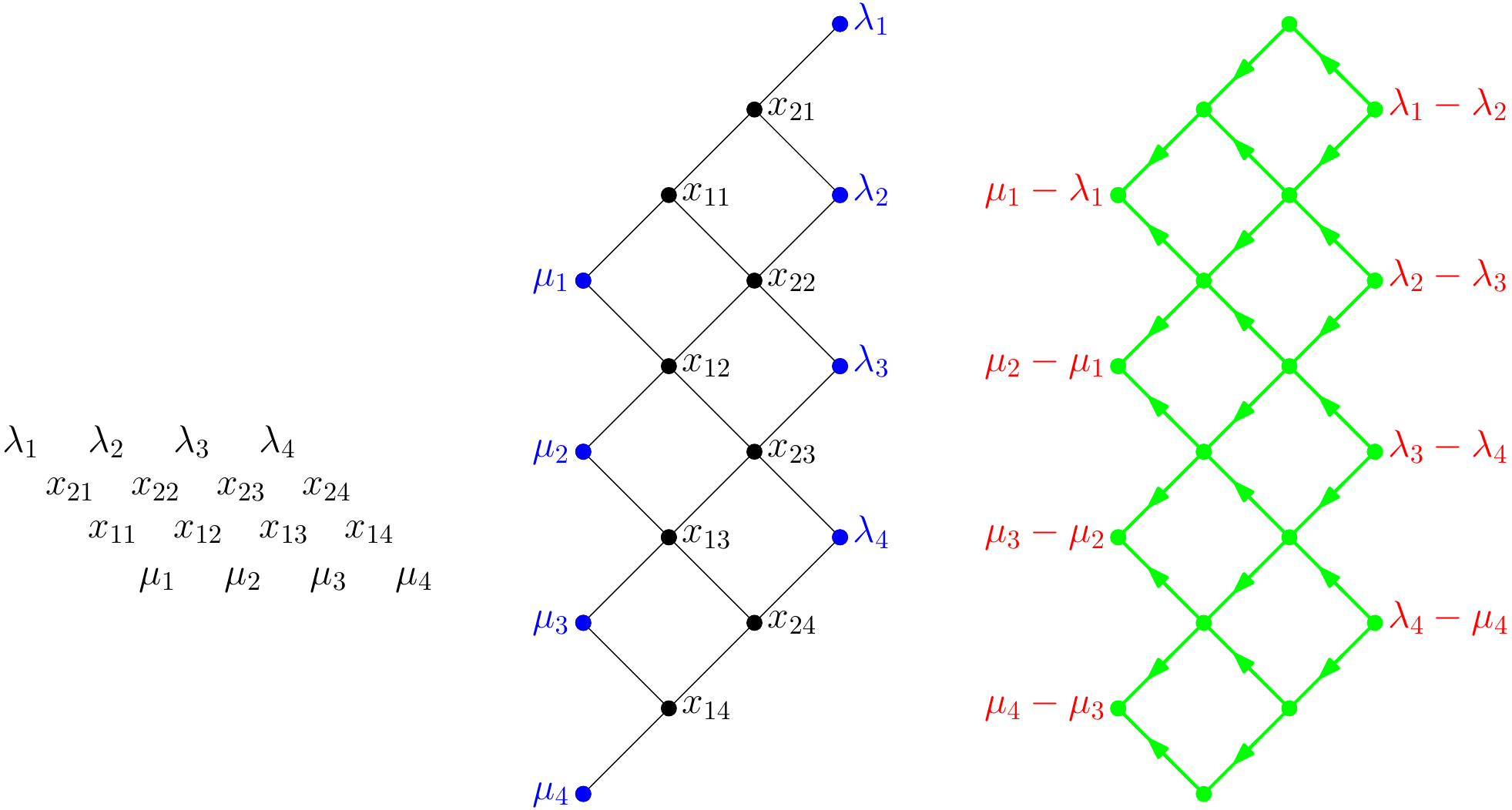}
	\caption{An example of the marked poset and corresponding flow network for recognizing a skew Gelfand-Tsetlin polytope $\mathrm{GT}(\lambda/\mu,3)$ with $\ell(\lambda)=4$ as a marked order polytope and a flow polytope.}
	\label{fig:skewgt}
\end{figure}

\section{Subdivisions of marked order and flow polytopes}
\label{sec:bijection}

In this section, we will give subdivision procedures for $\mathcal{O}(P,A)_\lambda$ and $\mathcal{F}_{G_{(P,A,\lambda)}}$ and prove the two procedures are equivalent. In particular, this will yield a bijective proof of Corollary \ref{cor:bij}.

We start by reviewing the subdivision procedure for flow polytopes following the exposition of  \cite{MMlidskii}. However, we will use a simplified version of the flow polytope subdivision method presented there, specialized to the types of graphs that appear in the present paper. 
 
\subsection{Subdividing  flow polytopes into products of simplices} \label{subsec:flow}
Flow polytopes admit a combinatorial iterative subdivision procedure. To describe the algorithm, we first introduce the necessary terminology and notation. A \textbf{bipartite noncrossing tree} is a tree with a distinguished bipartition of vertices into \textbf{left vertices} $x_1,\ldots,x_{\ell}$ and \textbf{right vertices} $x_{\ell+1},\ldots, x_{\ell+r}$ with no pair of edges $(x_p,x_{\ell+q}), (x_t,x_{\ell+u})$ where $p<t$ and $q>u$. Denote by $\mathcal{T}_{L,R}$ the set of  bipartite noncrossing trees, where $L$ and $R$ are the ordered sets $(x_1,\ldots,x_{\ell})$ and $(x_{\ell+1},\ldots,x_{\ell+r})$ respectively. Note that $|\mathcal{T}_{L,R}|=\binom{\ell+r-2}{\ell-1}$, since they are in bijection with weak compositions of $r-1$ into $\ell$ parts: a tree $T$ in $\mathcal{T}_{L,R}$ corresponds to the composition $(b_1-1,\ldots,b_\ell-1)$ of $r-1$, where $b_i$ denotes the number of edges incident to the left vertex $x_{\ell+i}$ in $T$. 

The bipartite noncrossing tree encoded by the composition $(0,2,1,1)$ is the following:
\begin{center}
	\includegraphics[width=1.3cm]{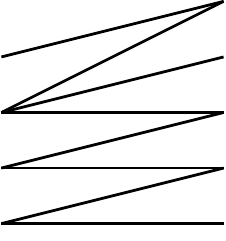}    
\end{center}

Consider a graph $G$ on the vertex set $[n+1]$ and an integer netflow
vector $a=(a_1, \ldots, a_n, -\sum_i a_i)$. In this paper, we will assume that $a_i>0$ implies $i$ has no incoming edges, $a_i<0$ implies $i$ has no outgoing edges, and $a_i=0$ implies $i$ has both incoming and outgoing edges. For these flow networks, the basic step of the subdivision method is the following:

\noindent Pick an arbitrary vertex $i$ of $G$ with netflow $a_i =0$.
Let $\mathcal{I}_i=\mathcal{I}_i(G)$ be the multiset of edges incoming to $i$, edges of the form $(\cdot,i)$. Let $\mathcal{O}_i=\mathcal{O}_i(G)$ be the multiset of outgoing edges from $i$, edges of the form $(i,\cdot)$. 

Assign an ordering to the sets $\mathcal{I}_i$ and $\mathcal{O}_i$, and consider a tree $T \in \mathcal{T}_{\mathcal{I}_i,\mathcal{O}_i}$. For each tree-edge $(e_1,e_2)$ of $T$, where $e_1=(r,i) \in \mathcal{I}_i$ and $e_2=(i,s)\in \mathcal{O}_i$, let $\mathrm{edge}(e_1,e_2)=(r,s)$. Define a graph $G^{(i)}_T$ by starting with $G$, deleting vertex $i$ and all incident edges $\mathcal{I}_i \cup \mathcal{O}_i$ of $G$, and adding the multiset of edges $\{\mathrm{edge}(e_1,e_2) \mid (e_1,e_2)\in E(T)\}$. See Figure \ref{fig:compoundedreduction} for an example. 
 
\begin{lemma}[Compounded Subdivision Lemma] 
	\label{lem:bigreduction} 
	Let  $G$ be a flow network on the vertex set $[n+1]$ with netflow $a=(a_1,\ldots,a_n,-\sum_{i=1}^n a_i)\in \mathbb{Z}^{n+1}$ and a vertex $i\in \{2,\ldots,n\}$ with $a_i=0$. Then, 
	$\{ \F_{G_T^{(i)}}(\hat{a})\}_{{T \in \mathcal{T}_{\mathcal{I}_i,\mathcal{O}_i}}}$ are top dimensional pieces in a subdivision of  
	$\F_G(a)$, where $\hat{a}$ equals $a$ with $ith$ coordinate deleted. 
\end{lemma}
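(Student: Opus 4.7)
The plan is to realize the claimed subdivision by viewing any flow on $G$ as arising from a flow on some $G_T^{(i)}$ via a ``routing'' that specifies how flow entering vertex $i$ is paired with flow leaving $i$. First, I note that for any $f \in \mathcal{F}_G(a)$, the marginals $(f(e))_{e \in \mathcal{I}_i}$ and $(f(e))_{e \in \mathcal{O}_i}$ have equal total (since $a_i = 0$), so they serve as the row and column sums of a nonempty transportation polytope $\Xi(f) \subseteq \mathbb{R}_{\geq 0}^{\mathcal{I}_i \times \mathcal{O}_i}$. Any $\phi \in \Xi(f)$ is a ``refinement'' of $f$ specifying how much of the flow entering via each $e_1 \in \mathcal{I}_i$ exits via each $e_2 \in \mathcal{O}_i$.

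For each $T \in \mathcal{T}_{\mathcal{I}_i, \mathcal{O}_i}$, I will define an integral affine map $\Phi_T \colon \mathcal{F}_{G_T^{(i)}}(\hat{a}) \to \mathcal{F}_G(a)$ as follows. Given $g \in \mathcal{F}_{G_T^{(i)}}(\hat{a})$, set $f(e) = g(e)$ on edges $e$ not incident to $i$; for each tree edge $(e_1, e_2) \in E(T)$, set $\phi(e_1, e_2) = g(\mathrm{edge}(e_1, e_2))$ and $\phi(e_1, e_2) = 0$ otherwise; then take $f(e_1) = \sum_{e_2} \phi(e_1, e_2)$ and $f(e_2) = \sum_{e_1} \phi(e_1, e_2)$. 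One checks that $\Phi_T$ is an injective integral affine map into $\mathcal{F}_G(a)$, and that the image $\mathcal{P}_T := \Phi_T(\mathcal{F}_{G_T^{(i)}}(\hat{a}))$ is full-dimensional: deleting vertex $i$ removes one balance constraint, while replacing $|\mathcal{I}_i| + |\mathcal{O}_i|$ original edges by $|E(T)| = |\mathcal{I}_i| + |\mathcal{O}_i| - 1$ tree edges removes one generator, so $\dim \mathcal{P}_T = \dim \mathcal{F}_G(a)$.

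The core step is to verify that $\{\mathcal{P}_T\}_{T \in \mathcal{T}_{\mathcal{I}_i,\mathcal{O}_i}}$ is a polyhedral subdivision of $\mathcal{F}_G(a)$. For coverage, I would invoke the \emph{northwest corner rule}: using the prescribed orderings on $\mathcal{I}_i$ and $\mathcal{O}_i$, greedily construct a refinement $\phi \in \Xi(f)$ by routing as much flow as possible from the current incoming edge into the current outgoing edge, then advancing in one or both coordinates. A standard monotonicity argument forces the support of $\phi$ to be noncrossing; its closure is a bipartite noncrossing tree $T$ (uniquely determined on generic $f$), placing $f \in \mathcal{P}_T$. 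Disjointness of interiors follows because on the relative interior of $\mathcal{P}_T$ the greedy refinement of $f$ is uniquely supported on $E(T)$.

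The main obstacle will be verifying the full polyhedral subdivision axioms, especially that $\mathcal{P}_T \cap \mathcal{P}_{T'}$ is a common face of each. This reduces to the classical fact that the northwest corner rule induces a regular triangulation of the transportation polytope indexed by bipartite noncrossing trees, transported fiberwise along the map $f \mapsto \phi(f)$; I would complete the proof by matching faces of noncrossing trees (given by tree-edge contractions) with face inclusions between the $\mathcal{P}_T$, following the treatment in \cite{MMlidskii}.
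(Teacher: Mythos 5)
The paper does not actually prove Lemma \ref{lem:bigreduction}: it is presented as review material, quoted from \cite{MMlidskii} (in a form specialized to the graphs arising here), so there is no in-paper proof to compare against. Your proposal is a correct reconstruction of the standard argument behind that result: the local data of a flow at the zero-netflow vertex $i$ is a transportation problem with equal marginals, the pieces $\mathcal{P}_T$ are the loci where that problem admits a plan supported on the noncrossing tree $T$, coverage follows from the northwest corner rule (whose output always has noncrossing support), and disjointness of interiors follows from the uniqueness of the noncrossing-supported plan, since a noncrossing support forces $\phi(1,1)=\min(u_1,v_1)$ and the rest greedily. Two small caveats. First, your assertion that every $\mathcal{P}_T$ is full-dimensional is not literally true for degenerate netflows (some pieces can be empty or lower-dimensional); this is harmless because the lemma only claims the $\F_{G_T^{(i)}}(\hat a)$ are the \emph{top-dimensional} pieces of a subdivision, but you should phrase the dimension count as an upper bound matching $\dim\F_G(a)$ rather than as an equality for every $T$. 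Second, the ``classical fact'' you invoke for the common-face axiom is really the staircase triangulation of the product of simplices $\Delta_{|\mathcal{I}_i|-1}\times\Delta_{|\mathcal{O}_i|-1}$ (equivalently, of the cone of all local plans), not of a single transportation polytope with fixed marginals; the cleaner route is to observe directly that $\mathcal{P}_T\cap\mathcal{P}_{T'}$ consists of flows whose noncrossing-supported plan is supported on $E(T)\cap E(T')$, which is the face of $\mathcal{P}_T$ obtained by setting the coordinates of $E(T)\setminus E(T')$ to zero (using injectivity of $\Phi_T$, which holds because a tree-supported plan is determined by its marginals). With those adjustments your outline matches the proof in \cite{MMlidskii}.
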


In order to view $\mathcal{F}_{G^{(i)}_T}$ as a subset of $\mathcal{F}_G$, label each edge $e$ of $G$ with a coordinate $f_e$. Label each new edge of $G_T^{(i)}$ by the formal sum of the coordinates of the edges of $G$ that formed it. To get an inclusion $\mathcal{F}_{G_T^{{(i)}}}\subseteq \mathcal{F}_G$, simply add the flow value of each edge in $G_T^{{(i)}}$ to all edges of $G$ appearing in the formal sum labeling it.

We refer to replacing $G$ by $\{G_T^{(i)}\}_{T\in\mathcal{T}_{L,R}}$ as a \textbf{compounded reduction} on $G$. In order to fully subdivide $\mathcal{F}_G$ into simplices, one performs a compounded reduction on $G$, then iteratively performs compound reductions on the graphs $G_T^{(i)}$. A series of these reductions can be efficiently encoded by a \textbf{compounded reduction tree}: the root of the tree is the original graph $G$; and the descendants of any node are the graphs obtained via a compounded reduction on that node. See Figure \ref{fig:compoundedreduction} for an example. The \textbf{canonical compounded reduction tree} of $G$ is obtained by performing compounded reductions from highest to lowest index netflow zero vertices, as in Figure \ref{fig:compoundedreduction}.  There is a natural way of labeling the  products of simplices into which we   subdivide our flow polytope via the compounded reductions by  integer points of other flow polytopes, as is explained in \cite{MMlidskii}.

\begin{figure}[ht]
	\includegraphics[scale=.5]{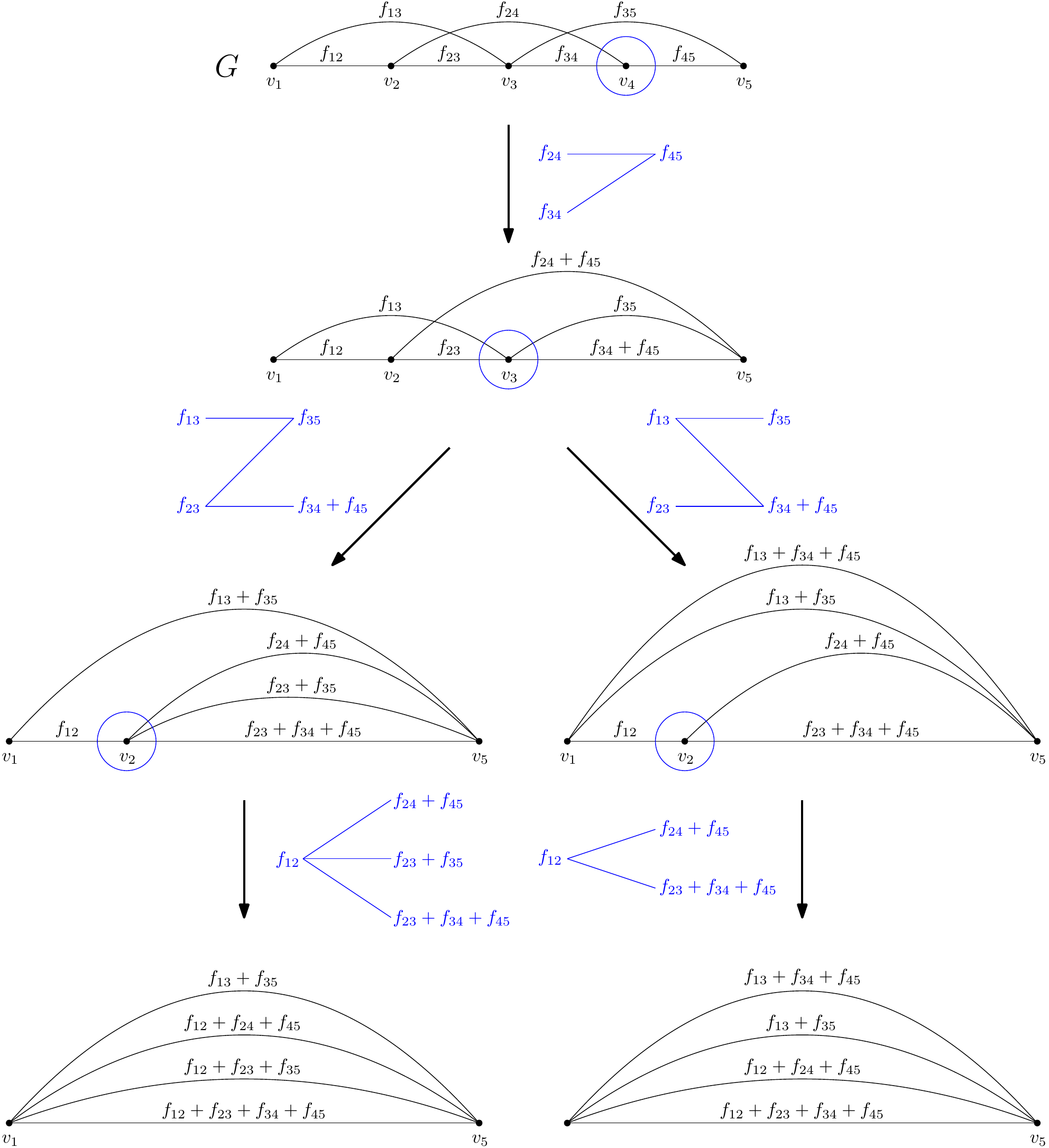}
	\caption{An example of a compounded reduction tree for a graph $G$ with netflow $(1,0,0,0,-1)$. The tree is built downward by performing a series of compounded reductions, starting with the root $G$. The labels on edges record the inclusion maps from the flow polytope of each graph to the flow polytope of $G$.}
	\label{fig:compoundedreduction}
\end{figure}

\subsection{Subdividing  order  polytopes into products of simplices} \label{subsec:order}
Given a bounded strongly planar embedding $H$ of a marked poset $(P, A, \lambda)$, consider the following method for subdividing $\mathcal{O}(P,A)_\lambda$:
\noindent Consider any face $F$ of $H$ not containing an edge $(\hat{0},\hat{1})$ (where, as previously, by face we mean bounded face). Suppose that $F$ is bounded on the left by $p_1>\cdots>p_k$ and on the right by $p_1=q_1>\cdots>q_\ell=p_k$. Replacing $F$ by any of the $\mathcal{N}=\binom{k+\ell-4}{\ell-2}$ linear extensions of $p_1, \ldots, p_{k}, q_2, \ldots, q_{\ell-1}$, we obtain $\mathcal{N}$ strongly planar marked posets $(P_1, A, \lambda), \ldots, (P_{\mathcal{N}}, A, \lambda)$.

\begin{lemma}
	\label{lem:markedorderpolytopesubdivisionstep}
	The marked order polytopes $\mathcal{O}(P_1, A, \lambda), \ldots, \mathcal{O}(P_{\mathcal{N}}, A, \lambda)$ described above form a subdivision of the order polytope $\mathcal{O}(P, A, \lambda)$.
\end{lemma}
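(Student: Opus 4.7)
The plan is to realize the subdivision as arising from a hyperplane cut on $\mathcal{O}(P,A)_\lambda$. Concretely, I would cut by the arrangement consisting of the hyperplanes $x_{p_i} = x_{q_j}$ for $p_i \in \{p_2,\ldots,p_{k-1}\}$ and $q_j \in \{q_2,\ldots,q_{\ell-1}\}$. On each open region of the cut, the signs of the differences $x_{p_i} - x_{q_j}$ are fixed, which combined with the chain inequalities $x_{p_1} \ge x_{p_2} \ge \cdots \ge x_{p_k}$ and $x_{q_1} \ge x_{q_2} \ge \cdots \ge x_{q_\ell}$ determines a total order on $\{p_1,\ldots,p_k,q_2,\ldots,q_{\ell-1}\}$, i.e., an interleaving of the two internal chains $p_2 > \cdots > p_{k-1}$ and $q_2 > \cdots > q_{\ell-1}$.

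Before matching regions to interleavings, I would first establish a structural fact: strong planarity of $H$ forces the interior elements on the left boundary of $F$ to be incomparable in $P$ to those on the right boundary. Indeed, any monotone path in the Hasse diagram connecting such a pair would, by the monotonicity of $y$-coordinates guaranteed by strong planarity, be forced either to cross an edge of $\partial F$ (violating planarity) or to pass through $\min(F)$ or $\max(F)$ (contradicting the choice of interior endpoints). This incomparability ensures that every interleaving yields a nonempty region, so the number of cells matches the claimed count $\binom{k+\ell-4}{\ell-2}$.

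I would then identify each region with the marked order polytope $\mathcal{O}(P_i, A)_\lambda$ of the corresponding refined poset. The new cover relations appearing in $P_i$ are exactly the cross relations between the $p_i$'s and $q_j$'s dictated by the interleaving, and requiring these cover relations is equivalent to fixing the signs of the differences $x_{p_i} - x_{q_j}$. Hence each $\mathcal{O}(P_i,A)_\lambda$ is exactly one of the closed regions of the cut.

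To conclude that $\{\mathcal{O}(P_i,A)_\lambda\}_{i=1}^{\mathcal{N}}$ is a polyhedral subdivision of $\mathcal{O}(P,A)_\lambda$, I would verify: every point of $\mathcal{O}(P,A)_\lambda$ lies in some cell, since its coordinates on the internal elements pick out an interleaving; the cells have pairwise disjoint interiors, since distinct interleavings disagree on the sign of some $x_{p_i} - x_{q_j}$; and the intersection of any two cells is a common face, obtained by forcing equality along the disagreeing hyperplanes. The main obstacle is the structural incomparability step, which requires carefully tracing paths in the Hasse diagram through the planar embedding $H$; once this is in hand, the rest is a routine hyperplane-arrangement argument in the spirit of Stanley's subdivision of the ordinary order polytope into simplices indexed by linear extensions.
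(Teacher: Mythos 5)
Your proposal is correct and takes essentially the same approach as the paper: the paper's proof is the single sentence that the subdivision is obtained by cutting $\mathcal{O}(P,A)_\lambda$ with the hyperplanes $x_{p_i}=x_{q_j}$, and your argument is a fleshed-out version of exactly that hyperplane cut, with the regions indexed by interleavings of the two boundary chains. The additional details you supply (incomparability of interior left- and right-boundary elements via strong planarity, hence nonemptiness of all $\binom{k+\ell-4}{\ell-2}$ cells, and the verification of the subdivision axioms) are sound and are precisely what the paper leaves implicit.
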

\begin{proof}
	This subdivision is obtained by cutting $\mathcal{O}(P,A)_\lambda$ by the hyperplanes $x_{p_i}=x_{q_j}$ for $i \in [1,k]$ and $j\in [1,\ell]$.
\end{proof}

By the above lemma, applying the above construction iteratively to each face  of the bounded strongly planar  embedding $H$ of the marked poset $(P, A, \lambda)$ yields a subdivision of $\mathcal{O}(P,A)_\lambda$ into the marked poset polytopes of a set of marked chains, that is, into products of simplices.

\begin{figure}[ht]
	\includegraphics[scale=.9]{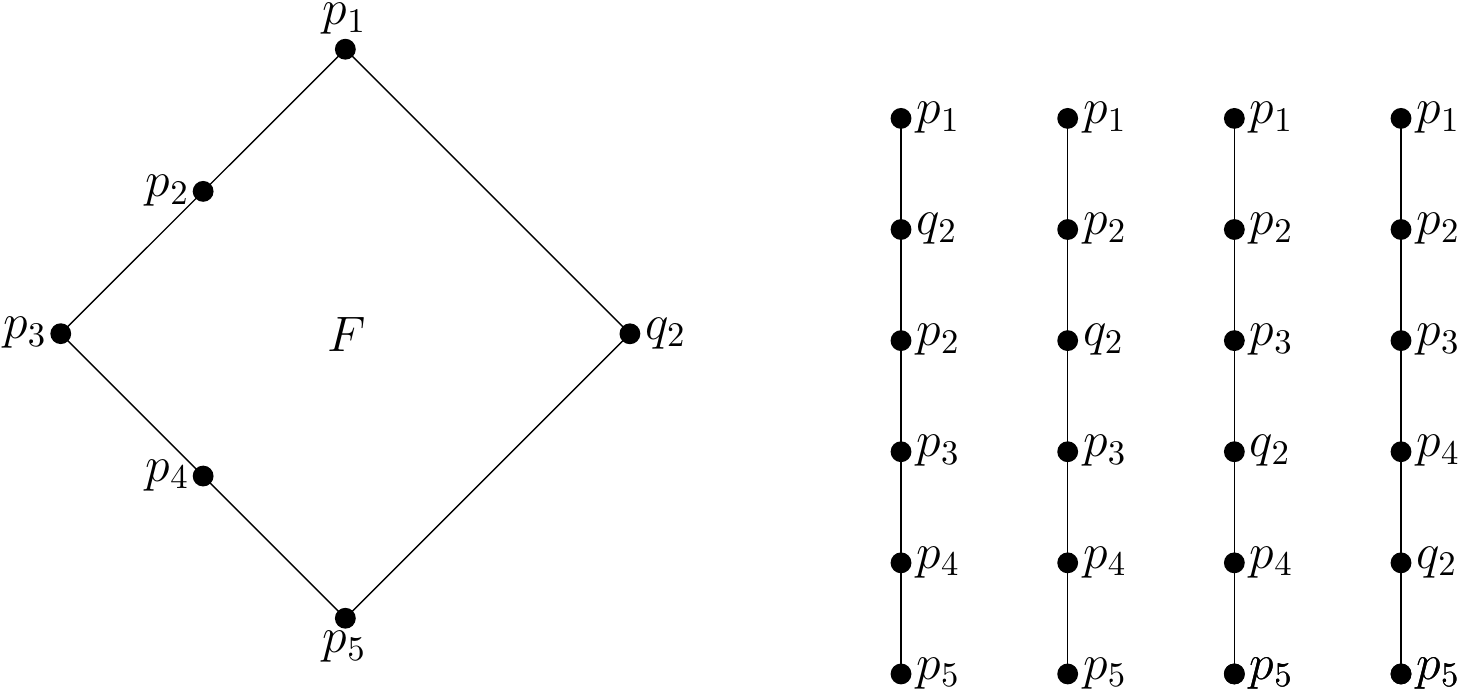}
	\caption{A face $F$ with unmarked left boundary in the embedding of a strongly planar poset $P$ (left) and the corresponding linear extensions of $F$ that replace $F$ to form the posets $P_1,\ldots,P_\mathcal{N}$ when subdividing $\mathcal{O}(P,A,\lambda)$ (right).}
	\label{fig:markedorderpolytopesubdivisionexample}	
\end{figure}

\subsection{Comparing the  subdivisions of flow and order polytopes}
 Theorem \ref{thm:markedorderflow} shows $\mathcal{O}(P,A)_\lambda$ is integrally equivalent to a flow polytope $\mathcal{F}_{G_{(P,A,\lambda)}}$. As we saw in Sections \ref{subsec:flow} and \ref{subsec:order}, both flow and order polytopes admit an iterative subdivision procedure.  We show here that indeed those procedures can be considered identical. 
 
 Through a single application of Lemma \ref{lem:bigreduction} on $\mathcal{F}_{G_{(P,A,\lambda)}}$, we obtain the following.

\begin{lemma}  \label{lem:markedflowpolytopesubdivisionstep} Given a bounded strongly planar embedding $H$ of the marked poset $(P, A, \lambda)$, consider a face $F$ of $H$ which has no markings on its left boundary. Linearly order the $k-1$ outgoing and $\ell-1$ incoming edges of $v_F$ from top to bottom. Performing a compounded reduction at $v_F$ on $G_{(P,A,\lambda)}$ yields $\mathcal{N}=\binom{k+\ell-4}{\ell-2}$ flow networks $G_{(P,A,\lambda)}^1,\ldots,G_{(P,A,\lambda)}^{\mathcal{N}}$ such that the polytopes $\F_{G_{(P,A,\lambda)}^1},\ldots, \F_{G_{(P,A,\lambda)}^{\mathcal{N}}}$ subdivide $\F_{G_{(P,A,\lambda)}}$.
\end{lemma}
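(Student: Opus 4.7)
The plan is to derive this lemma as a direct application of the Compounded Subdivision Lemma (Lemma \ref{lem:bigreduction}) to the vertex $v_F$. The main task is to verify that the hypotheses of Lemma \ref{lem:bigreduction} are satisfied at $v_F$, and then to check that the edge counts at $v_F$ agree with those in the statement so that the number of bipartite noncrossing trees comes out to $\binom{k+\ell-4}{\ell-2}$.

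First I would check that $v_F$ remains a netflow-zero interior vertex of $G_{(P,A,\lambda)}$. By hypothesis $F$ has no marked elements on its left boundary, so the construction of $G_{(P,A,\lambda)}$ from $G_P$ in Section \ref{sec:markedasflow} prescribes no modification at $v_F$ when processing $F$ itself. The only way a modification performed at a \emph{neighboring} face $F'$ could affect $v_F$ is by reassigning the tail of an edge that was originally outgoing from $v_{F'}$; such a reassignment replaces the source endpoint with a new source vertex $s_m^{F'}$ but leaves the head of the edge unchanged. Hence the multisets $\mathcal{I}_{v_F}$ and $\mathcal{O}_{v_F}$ of edges incident to $v_F$ in $G_{(P,A,\lambda)}$ have the same cardinalities as in $G_P$, and $v_F$ retains netflow $0$.

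Next I would compute $|\mathcal{I}_{v_F}|$ and $|\mathcal{O}_{v_F}|$ from the planar data. Using the orientation rule from Section \ref{sec:markedasflow}, a Hasse edge on the left boundary of $F$ dualizes to an edge leaving $v_F$ (since $F$ sits to the right of that Hasse edge), while a Hasse edge on the right boundary of $F$ dualizes to an edge entering $v_F$. The left boundary $p_1 > \cdots > p_k$ contributes $k-1$ covering relations, giving $|\mathcal{O}_{v_F}| = k-1$, and symmetrically the right boundary yields $|\mathcal{I}_{v_F}| = \ell - 1$.

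Finally I would invoke Lemma \ref{lem:bigreduction} at the vertex $v_F$. It yields a family of flow networks $\{G^{(v_F)}_T\}_{T \in \mathcal{T}_{\mathcal{I}_{v_F},\mathcal{O}_{v_F}}}$ whose flow polytopes subdivide $\mathcal{F}_{G_{(P,A,\lambda)}}$, and these are the networks we call $G_{(P,A,\lambda)}^1,\dots,G_{(P,A,\lambda)}^{\mathcal{N}}$. The cardinality of $\mathcal{T}_{\mathcal{I}_{v_F},\mathcal{O}_{v_F}}$ was recorded in Section \ref{subsec:flow} as $\binom{(\ell-1)+(k-1)-2}{(\ell-1)-1} = \binom{k+\ell-4}{\ell-2} = \mathcal{N}$, completing the argument. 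The main (and only minor) obstacle is the bookkeeping at the first step: one must be careful that modifications made at neighboring faces do not inadvertently alter the edge counts at $v_F$, but since those modifications only relabel tails of edges, the incoming/outgoing structure at $v_F$ is preserved.
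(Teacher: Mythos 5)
Your proposal is correct and takes the same route as the paper, which derives this lemma as a single application of the Compounded Subdivision Lemma (Lemma \ref{lem:bigreduction}) at the vertex $v_F$; the paper in fact states this in one line without further detail. Your verification that $v_F$ keeps netflow $0$ with $|\mathcal{O}_{v_F}|=k-1$ and $|\mathcal{I}_{v_F}|=\ell-1$ (and that modifications at neighboring faces only reassign tails of edges, leaving the incidence data at $v_F$ intact) correctly fills in the bookkeeping the paper leaves implicit, and the count $\binom{(\ell-1)+(k-1)-2}{(\ell-1)-1}=\binom{k+\ell-4}{\ell-2}$ matches.
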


\begin{figure}[ht]
	\includegraphics[scale=.9]{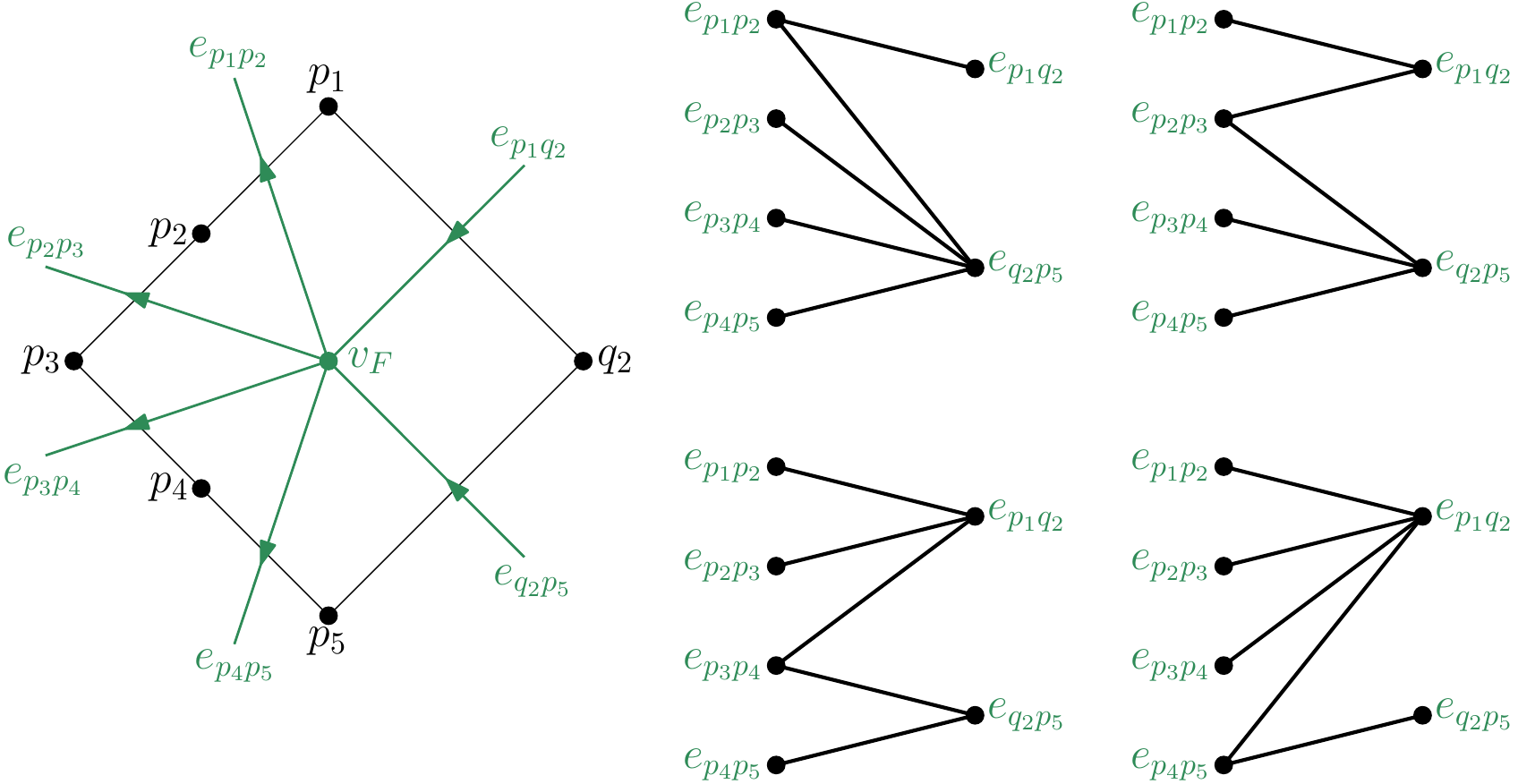}
	\caption{A netflow zero vertex $v_F$ of $G_{(P,A,\lambda)}$ (left) and the corresponding bipartite noncrossing trees that can be used to perform a compounded reduction at $v_F$ to produce flow networks $G^1_{(P,A,\lambda)},\ldots,G^{\mathcal{N}}_{(P,A,\lambda)}$ whose flow polytopes subdivide $\mathcal{F}_{G_{(P,A,\lambda)}}$ (right).}
	\label{fig:markedflowpolytopesubdivisionexample}
\end{figure}

We now describe an equivalence between the subdivision procedures of $\mathcal{O}(P,A)_\lambda$ and $\mathcal{F}_{G_{(P,A,\lambda)}}$ whose basic step is described in Lemma \ref{lem:markedorderpolytopesubdivisionstep} and Lemma \ref{lem:markedflowpolytopesubdivisionstep} respectively. We first focus on the case of a single step of both subdivisions.
 
 \begin{lemma}  
 	\label{lem:bijectioninductivestep} 
 	Given a bounded strongly planar embedding $H$ of a marked poset $(P, A, \lambda)$, let $F$ be a face of $H$ with unmarked left boundary. Let $\Gamma$ be the integral equivalence $\mathcal{O}(P,A)_\lambda\to \mathcal{F}_{G_{(P,A,\lambda)}}$ of Theorem \ref{thm:markedorderflow}. Then there is a bijection $\gamma\colon[\mathcal{N}]\to[\mathcal{N}]$ between the linear extensions of $F$  and the bipartite noncrossing trees from a compounded reduction at $v_F$ such that 
 	\[\Gamma\colon\mathcal{O}(P_j,A)_\lambda \to \mathcal{F}_{G^{\gamma(j)}_{(P,A,\lambda)}}.\]
\end{lemma}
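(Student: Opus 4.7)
The strategy is to describe both indexing sets of Lemma \ref{lem:markedorderpolytopesubdivisionstep} and Lemma \ref{lem:markedflowpolytopesubdivisionstep} in terms of a common combinatorial object---a shuffle of two increasing sequences of lengths $k-2$ and $\ell-2$---and then to verify that the integral equivalence $\Gamma$ of Theorem \ref{thm:markedorderflow} carries the order polytope piece indexed by a given shuffle to the flow polytope piece indexed by the same shuffle. First I would translate between order polytope and flow coordinates at the face $F$. With $F$ having left boundary $p_1 > \cdots > p_k$ and right boundary $p_1=q_1 > \cdots > q_\ell = p_k$, set $f_i = x_{p_{i-1}} - x_{p_i}$ for $i \in \{2,\ldots,k\}$ and $g_j = x_{q_{j-1}} - x_{q_j}$ for $j \in \{2,\ldots,\ell\}$, so that $f_i$ and $g_j$ are the values under $\Gamma$ of the flows on the outgoing and incoming edges of $v_F$ crossing the corresponding cover relations. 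The hypothesis that the left boundary of $F$ is unmarked is precisely what ensures that $v_F$ has netflow $0$ in $G_{(P,A,\lambda)}$, and conservation $\sum_i f_i = \sum_j g_j$ reproduces the identity $x_{p_1} - x_{p_k} = x_{q_1} - x_{q_\ell}$. Passing to the partial sums $F_i = f_2 + \cdots + f_i = x_{p_1} - x_{p_i}$ and $G_j = g_2 + \cdots + g_j = x_{q_1} - x_{q_j}$ converts each equality $x_{p_i} = x_{q_j}$ into an equality $F_i = G_j$.

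Second, I would describe the two subdivisions in this common language. On the order polytope side, Lemma \ref{lem:markedorderpolytopesubdivisionstep} cuts $\mathcal{O}(P,A)_\lambda$ by the hyperplanes $x_{p_i}=x_{q_j}$, and each top-dimensional region is labeled by a compatible total order on $\{x_{p_2},\ldots,x_{p_{k-1}},x_{q_2},\ldots,x_{q_{\ell-1}}\}$, equivalently a shuffle of the two increasing sequences $F_2 < \cdots < F_{k-1}$ and $G_2 < \cdots < G_{\ell-1}$. On the flow polytope side, a bipartite noncrossing tree $T \in \mathcal{T}_{\mathcal{I}_{v_F},\mathcal{O}_{v_F}}$ is in natural bijection, via the composition encoding recalled in Section \ref{subsec:flow}, with a lattice path in a $(k-2)\times(\ell-2)$ rectangle, and hence with a shuffle of the same two sequences. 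The bijection $\gamma$ is then defined by matching a linear extension to the noncrossing tree carrying the same shuffle.

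Third, I would verify the commutativity $\Gamma(\mathcal{O}(P_j,A)_\lambda) = \mathcal{F}_{G^{\gamma(j)}_{(P,A,\lambda)}}$. Unpacking the inclusion $\mathcal{F}_{G^{(v_F)}_T} \hookrightarrow \mathcal{F}_{G_{(P,A,\lambda)}}$ described after Lemma \ref{lem:bigreduction}, the image of $\mathcal{F}_{G^{(v_F)}_T}$ should consist of precisely those flows whose partial sums $F_i$ and $G_j$ are interleaved according to the lattice path of $T$. Combined with the first-step translation, this identifies the image under $\Gamma$ of the order polytope region with shuffle $\sigma$ with the flow polytope region coming from the tree $\gamma(\sigma)$.

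The main obstacle is this last verification: one must show that the linear identifications imposed on $v_F$ by the bipartite noncrossing tree $T$ during the compounded reduction translate exactly to partial-sum equalities $F_i = G_j$ at the tree-edges of $T$, and that the strict inequalities between the remaining consecutive partial sums on the interior of $\mathcal{F}_{G^{(v_F)}_T}$ are dictated by the lattice path of $T$. This reduces to a careful bookkeeping of the merging rule $\mathrm{edge}(e_1,e_2)$ of the compounded reduction together with flow conservation at $v_F$; once established, the matching of shuffles is immediate and the desired bijection $\gamma$ follows.
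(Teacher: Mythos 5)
Your proposal is correct and follows essentially the same route as the paper: the paper defines $\gamma$ by drawing each bipartite noncrossing tree in a bounding rectangle and reading off the linear order of $F$ from the regions the tree cuts out, which is exactly your shuffle/lattice-path correspondence in pictorial form. The only difference is in the final verification, where the paper invokes the graph identity $G_{(P_j,A,\lambda)} = G^{\gamma(j)}_{(P,A,\lambda)}$ rather than tracking partial sums of flows directly, but both amount to the same bookkeeping of how the merging rule $\mathrm{edge}(e_1,e_2)$ interleaves the left and right boundaries of $F$.
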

\begin{proof}
	Let $F$ be bounded by $p_1>p_2>\cdots>p_k$ on the left and $p_1=q_1>q_2>\cdots>q_\ell=p_k$ on the right. To define the bijection, we start by drawing the bipartite noncrossing trees with the vertices arranged in vertical columns. Label each vertex of the tree by the edge of $H$ dual to the edge of $G_{(P,A,\lambda)}$ it represents. Encase each tree in a bounding rectangle so that the vertex columns lie on the interiors of the sides of the rectangle. See Figure \ref{fig:subdivisionbijectionexample}. 
	
	To construct a linear order from a tree, we will label the regions of the rectangle cut out by the tree. Label the top region $p_1$ and the bottom $p_k$. All intermediate regions are triangles with exactly one edge on the bounding rectangle. Label such regions by the common label of the endpoints of this edge. The result will be a linear order of the face $F$. Conversely, a linear ordering gives an ordering on the edge segments on each side of the rectangle. Build the tree top to bottom by adding in edges inside the bounding rectangle to cut out regions as specified by the linear order from top to bottom.
	
	To see that $\gamma$ has the property 
	\[\Gamma\colon\mathcal{O}(P_j,A)_\lambda \to \mathcal{F}_{G^{\gamma(j)}_{(P,A,\lambda)}},\]
	it suffices to note that $\gamma$ is constructed precisely so that $G_{(P_j,A,\lambda)}$ = $G^{\gamma(j)}_{(P,A,\lambda)}$ for each $j\in[\mathcal{N}]$.
\end{proof}

\begin{figure}[ht]
	\includegraphics[scale=.90]{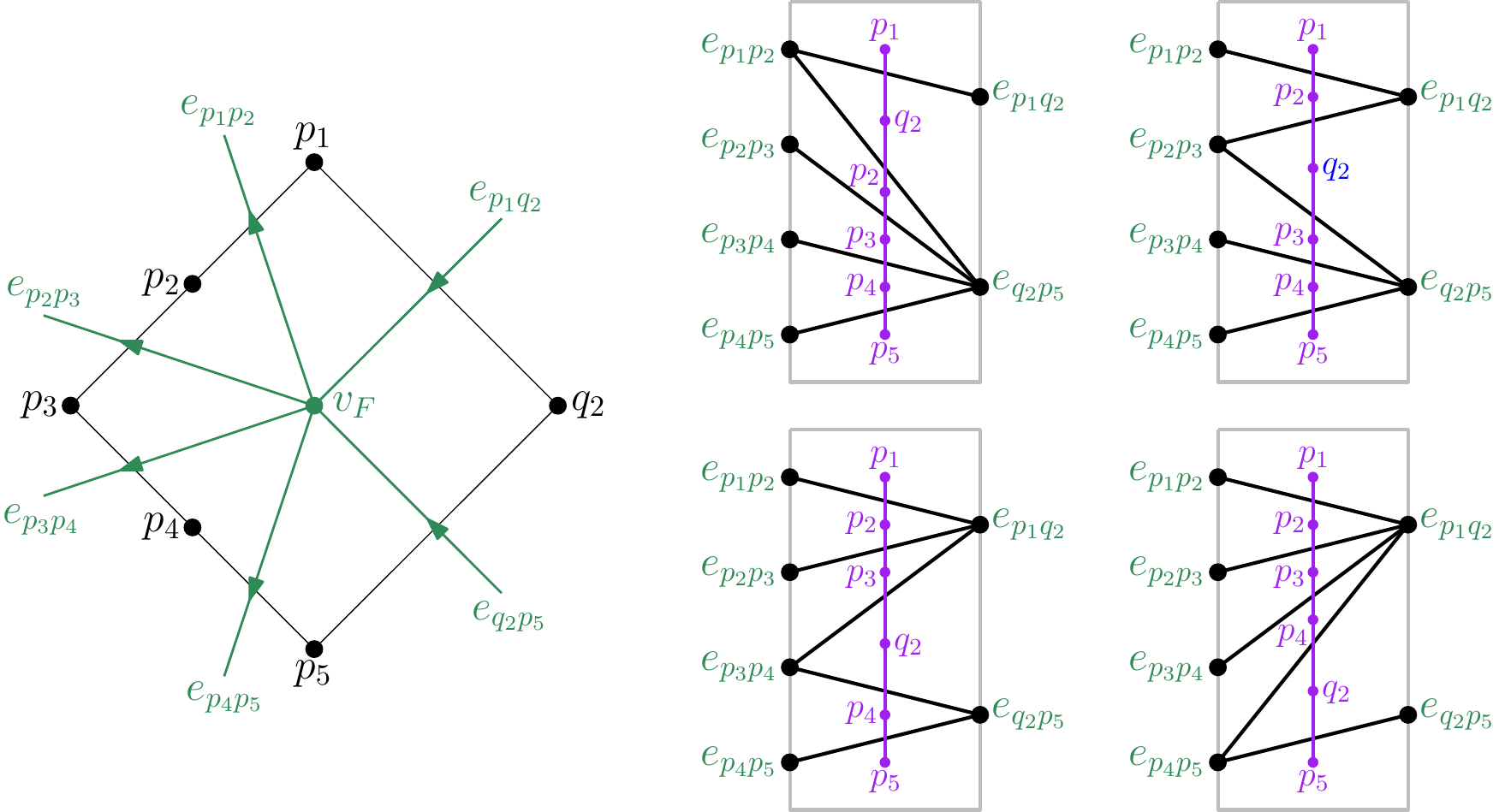}
	\caption{An example of the equivalence between the subdivisions of $\mathcal{O}(P,A)_\lambda$ and $\mathcal{F}_{G_{(P,A,\lambda)}}$ through a bijection of linear orders and bipartite noncrossing trees.}
	\label{fig:subdivisionbijectionexample}
\end{figure}

\begin{theorem}
	Let $H$ be a bounded strongly planar embedding of the marked poset $(P, A, \lambda)$. Choose an ordering $F_1,\ldots, F_m$ of the faces of $H$ that contain no marked elements on their respective left boundaries. Let $\Delta$ be the subdivision of $\mathcal{O}(P,A)_\lambda$ obtained by applying Lemma \ref{lem:markedorderpolytopesubdivisionstep} to each of $F_1,\ldots,F_m$. Let $\Delta'$ be the subdivision of $\mathcal{F}_{G_{(P,A,\lambda)}}$ obtained by applying Lemma \ref{lem:markedflowpolytopesubdivisionstep} to each of $v_{F_1},\ldots,v_{F_m}$ in $G_{(P,A,\lambda)}$. Then the integral equivalence $\Gamma\colon\mathcal{O}(P,A)_\lambda\to \mathcal{F}_{G_{(P,A,\lambda)}}$ induces a bijection $\gamma$ from regions of $\Delta$ to regions of $\Delta'$. 
\end{theorem}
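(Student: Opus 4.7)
The plan is to prove the theorem by induction on $m$, using Lemma \ref{lem:bijectioninductivestep} as the one-step bijection. For $0 \leq i \leq m$, let $\Delta_i$ denote the partial subdivision of $\mathcal{O}(P,A)_\lambda$ obtained by applying Lemma \ref{lem:markedorderpolytopesubdivisionstep} successively to $F_1, \ldots, F_i$, and let $\Delta'_i$ denote the partial subdivision of $\mathcal{F}_{G_{(P,A,\lambda)}}$ obtained by applying Lemma \ref{lem:markedflowpolytopesubdivisionstep} successively to $v_{F_1}, \ldots, v_{F_i}$. The strengthened inductive claim I would prove is that every region of $\Delta_i$ has the form $\mathcal{O}(P_R, A)_\lambda$ for some strongly planar marked poset $(P_R, A, \lambda)$ whose embedding is inherited from $H$ by inserting chains into the interiors of $F_1, \ldots, F_i$, every region of $\Delta'_i$ has the form $\mathcal{F}_{G_{(P_R, A, \lambda)}}$ for the same poset, and $\Gamma$ restricts to the integral equivalence of Theorem \ref{thm:markedorderflow} between these paired regions. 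Taking $i = m$ then yields the desired bijection $\gamma$.

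For the inductive step from $i-1$ to $i$, I would fix a region $R = \mathcal{O}(P_R, A)_\lambda$ of $\Delta_{i-1}$ that is paired by $\gamma_{i-1}$ with $\mathcal{F}_{G_{(P_R, A, \lambda)}}$. Because the preceding subdivisions only inserted internal chain edges inside $F_1, \ldots, F_{i-1}$, the boundary of $F_i$ is unchanged, and hence $F_i$ survives as a face with unmarked left boundary in the embedding of $(P_R, A, \lambda)$; symmetrically, the vertex $v_{F_i}$ in $G_{(P_R, A, \lambda)}$ retains the same incident edges and netflow zero. Lemma \ref{lem:bijectioninductivestep} therefore applies to this local configuration and delivers a $\Gamma$-compatible bijection between the refinement of $R$ by Lemma \ref{lem:markedorderpolytopesubdivisionstep} at $F_i$ and the refinement of $\gamma_{i-1}(R)$ by Lemma \ref{lem:markedflowpolytopesubdivisionstep} at $v_{F_i}$. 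Gluing these local bijections across all regions of $\Delta_{i-1}$ produces $\gamma_i$ and propagates the three structural properties to step $i$.

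The main obstacle will be verifying that the structural part of the inductive hypothesis is genuinely preserved, namely that after replacing $F_j$ by a linear extension of its boundary elements the resulting marked poset is still a bounded strongly planar marked poset satisfying the left-boundary marking condition on every remaining unprocessed face. This amounts to checking closure of the class of such posets under face-linearization: planarity is automatic because the inserted chain edges can be drawn inside the planar region previously occupied by $F_j$; the markings on $A$ are untouched; and the left boundary of any other face $F_i$ consists of edges of $H$ untouched by the insertion inside $F_j$. Once this bookkeeping is in hand, the induction runs cleanly because each invocation of Lemma \ref{lem:bijectioninductivestep} is applied to a marked poset of exactly the kind for which the lemma was proved, and compounded reductions at the distinct netflow-zero vertices $v_{F_j}$ of $G_{(P,A,\lambda)}$ clearly commute since they involve disjoint sets of incident edges.
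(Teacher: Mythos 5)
Your overall strategy is exactly the paper's: the paper's entire proof is ``apply Lemma \ref{lem:bijectioninductivestep} iteratively,'' and your induction on $i$ with the strengthened hypothesis (each region of $\Delta_i$ is $\mathcal{O}(P_R,A)_\lambda$, each region of $\Delta_i'$ is $\mathcal{F}_{G_{(P_R,A,\lambda)}}$, and $\Gamma$ restricts to the Theorem \ref{thm:markedorderflow} equivalence on each pair) is the right way to make that one-liner precise.

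However, the step you yourself identify as the main obstacle is dispatched with two claims that are false as stated. You assert that after linearizing $F_j$ ``the boundary of $F_i$ is unchanged'' and that the compounded reductions at the $v_{F_j}$ ``involve disjoint sets of incident edges.'' Neither holds when $F_i$ and $F_j$ are adjacent faces of $H$: a shared covering relation $p\lessdot q$ lies on the right boundary of one face and the left boundary of the other, so its dual edge is incident to both $v_{F_i}$ and $v_{F_j}$; and linearizing $F_j$ inserts new elements between $p$ and $q$, which therefore appear on the boundary of $F_i$ as well. So $F_i$ does change, and the number of linear extensions (hence the value of $\mathcal{N}$) at the later step is computed in the \emph{current} poset, not in $H$. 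The inductive claim can still be rescued, but it needs the following argument, which you omit: since $p$ and $q$ are consecutive on the right boundary of $F_j$, any element inserted between them by a linear extension of $F_j$'s boundary must come from the left boundary of $F_j$, which is unmarked by hypothesis; hence every unprocessed $F_i$ retains an unmarked left boundary, remains a bona fide bounded face of a bounded strongly planar embedding, and its dual vertex retains netflow zero with in/out-degrees matching the lengths of its (possibly enlarged) right and left boundaries. With that replacement the identity $G_{(P_j,A,\lambda)}=G^{\gamma(j)}_{(P,A,\lambda)}$ from Lemma \ref{lem:bijectioninductivestep} carries the induction, and no commutativity of the reductions is needed since the ordering $F_1,\ldots,F_m$ is fixed.
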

\begin{proof}
	Apply Lemma $\ref{lem:bijectioninductivestep}$ iteratively to each of $P_1,\ldots,P_\mathcal{N}$.
\end{proof}

 \begin{figure}
	\includegraphics[scale=.75]{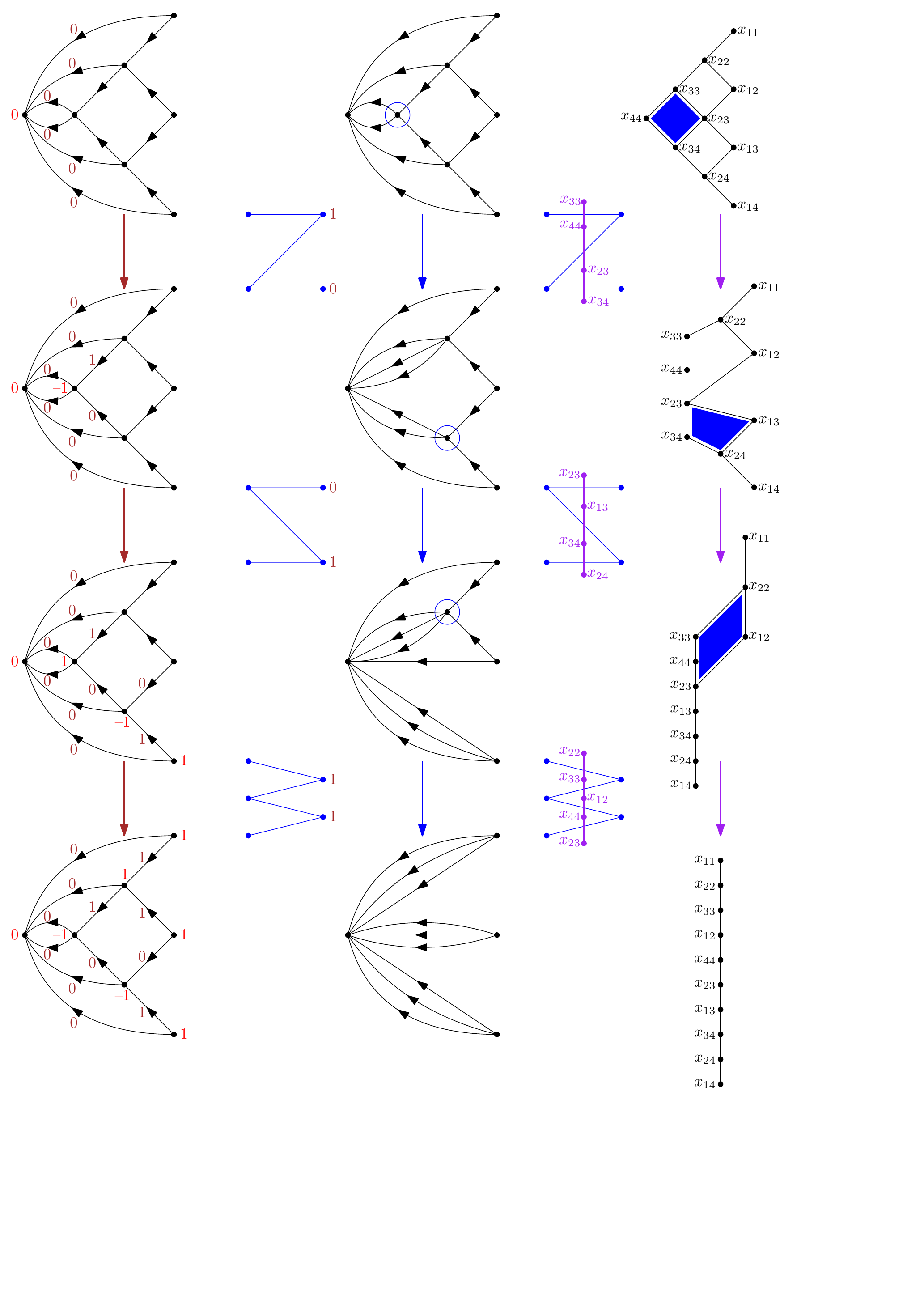}
	\caption{An example of the bijective proof of Corollary \ref{cor:generalbij}. In the left column, a flow on $G_{(P,A,\lambda)}$ is mapped to a leaf of the canonical compounded reduction tree of $G_{(P,A,\lambda)}$. Then, each step of the path (middle column) from the root to that leaf is mapped to a marked order polytope by $\gamma$ (right column), the last of which is a linear order.}
	\label{fig:bijectionexample}
\end{figure}

 \subsection{Bijecting the combinatorial objects labeling the subdivisions of flow and order polytopes}
Now we are ready to give a bijective proof of a generalization of Corollary \ref{cor:bij}. Figure \ref{fig:bijectionexample} provides a detailed example of the bijection. 
	
  \begin{corollary}
 	\label{cor:generalbij}
	Let $H$ be a bounded strongly planar embedding of the marked poset $(P,A,\lambda)$ with $A=\{p_1,\ldots,p_k\}$ such that $\lambda(p_1)\geq \ldots\geq \lambda(p_k)$. Additionally, assume $P$ is marked in such a way that $G_{(P,A,\lambda)}$ has only one sink. Order the $n$ vertices of $G_{P,A,\lambda}$ so that sources corresponding to $p_1,\ldots,p_k$ are first, edges go from earlier to later vertices, and the sink is last. Then 
	\[N_{P,A,\lambda}(a_1,\ldots,a_{k-1}) = K_{G_{(P,A,\lambda)}}(a_1-\om_1,\ldots,a_{k-1}-\om_{k-1},-\om_k,\ldots,-\om_{n-1},0), \]
	where $\om_j$ is the outdegree of vertex $j$ in $G_{(P,A,\lambda)}$ minus $1$.
 \end{corollary}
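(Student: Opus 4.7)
The plan is to derive the identity by comparing two polynomial expansions of the common volume of $\widetilde{\mathcal{O}}(P,A)_\lambda$ and $\mathcal{F}_{G_{(P,A,\lambda)}}$, which are equal by the integral equivalence of Theorem \ref{thm:markedorderflow}.

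First, by Theorem \ref{thm:markedordervolume},
$$\mathrm{Vol}\,\widetilde{\mathcal{O}}(P,A)_\lambda = \sum_{a_1,\ldots,a_{k-1}\geq 0} N_{P,A,\lambda}(a_1,\ldots,a_{k-1})\prod_{i=1}^{k-1}\frac{(\lambda_i-\lambda_{i+1})^{a_i}}{a_i!}.$$
From the construction of $G_{(P,A,\lambda)}$ in Section \ref{sec:markedasflow}, with the vertex ordering stipulated in the statement, the netflow is $\lambda_i - \lambda_{i+1}$ at the $i$-th source for $1 \leq i \leq k-1$ and zero at every other non-sink vertex, with the single-sink hypothesis guaranteeing that the remaining netflow is absorbed at the final vertex. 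Feeding this netflow into the Baldoni--Vergne--Lidskii formula (\ref{eq:lidskiivolume}), the factor $0^{j_i}/j_i!$ attached to each internal vertex vanishes unless $j_i = 0$, collapsing the Lidskii sum to
$$\sum_{j_1,\ldots,j_{k-1}\geq 0}\ \prod_{i=1}^{k-1}\frac{(\lambda_i-\lambda_{i+1})^{j_i}}{j_i!}\cdot K_{G_{(P,A,\lambda)}}\bigl(j_1-\om_1,\ldots,j_{k-1}-\om_{k-1},-\om_k,\ldots,-\om_{n-1},0\bigr).$$
Since the differences $\lambda_1-\lambda_2,\ldots,\lambda_{k-1}-\lambda_k$ may be varied as independent positive reals, one can compare coefficients of $\prod_{i=1}^{k-1}(\lambda_i-\lambda_{i+1})^{a_i}/a_i!$ between the two expansions to extract the stated identity.

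The main obstacle I anticipate is the netflow bookkeeping. I would need to verify that the vertex ordering prescribed in the statement (sources first, interior vertices thereafter in an edge-respecting order, sink last), together with the single-sink hypothesis, satisfies the Baldoni--Vergne--Lidskii hypotheses (each non-sink vertex carries an outgoing edge, and the netflow is nonnegative at every non-sink vertex), and that the $k-1$ sources are precisely those with netflows $\lambda_i - \lambda_{i+1}$. A bijective alternative, more in keeping with the subdivision correspondences of the current section and illustrated in Figure \ref{fig:bijectionexample}, would begin with a flow on $G_{(P,A,\lambda)}$ counted by the right-hand side, trace it down to a leaf of the canonical compounded reduction tree, and iteratively apply the bijection $\gamma$ of Lemma \ref{lem:bijectioninductivestep} to produce a linear extension of $P$ with marked-element positions dictated by $(a_1,\ldots,a_{k-1})$ --- that is, one counted by the left-hand side.
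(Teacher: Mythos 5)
Your primary argument is correct but takes a genuinely different route from the paper. You derive the identity by equating two polynomial expansions of the common volume: Theorem \ref{thm:markedordervolume} on the order-polytope side and the Baldoni--Vergne--Lidskii formula \eqref{eq:lidskiivolume} on the flow side, then comparing coefficients of $\prod_i(\lambda_i-\lambda_{i+1})^{a_i}/a_i!$ (valid since neither $N_{P,A,\lambda}$ nor $K_{G_{(P,A,\lambda)}}$ depends on the actual values of $\lambda$, so both sides are polynomials in the independent nonnegative parameters $\lambda_i-\lambda_{i+1}$). This is exactly the mechanism by which Corollary \ref{cor:bij} was obtained from \eqref{eqn:shiftedsytvol} and \eqref{eqn:lidskiivol}, and it is shorter and avoids the subdivision machinery entirely; the bookkeeping you flag (that the single-sink hypothesis forces exactly $k-1$ sources with netflows $\lambda_i-\lambda_{i+1}$, that every non-sink vertex of $G_{(P,A,\lambda)}$ has an outgoing edge so Theorem \ref{thm:lidskii} applies, and that the terms with $j_i>0$ at zero-netflow vertices vanish) is indeed the only thing to verify, and it goes through. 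What this route does \emph{not} buy is a bijection: the whole point of Section \ref{sec:bijection}, and of placing this corollary there, is to prove the identity bijectively, and the paper's proof does so by matching the two subdivisions --- flows with the given netflow correspond to leaves of the canonical compounded reduction tree of $G_{(P,A,\lambda)}$ with $a_i$ edges leaving the $i$th source (Lemma 4.1 of the cited reference), and the map $\gamma$ of Lemma \ref{lem:bijectioninductivestep}, iterated over the faces $F_{i_m},\ldots,F_{i_1}$, carries these leaves to linear extensions of $P$ with the marked elements in positions $1,2+a_1,\ldots,k+a_1+\cdots+a_{k-1}$. Your sketched ``bijective alternative'' is precisely this argument, so you have in effect both an independent analytic proof and the paper's intended constructive one.
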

 \begin{proof}
 	Choose an ordering of the vertices of $G_{(P,A,\lambda)}$ so that all edges go from earlier to later vertices in the order. Let $v_{F_{i_1}}<\cdots<v_{F_{i_m}}$ be the induced order of the vertices $v_F$ corresponding to faces with unmarked left boundary. Let $\Delta$ be the subdivision of $\mathcal{O}(P,A)_\lambda$ and $\Delta'$ the subdivision of $\mathcal{F}_{G_{(P,A,\lambda)}}$ obtained by using Lemma \ref{lem:markedorderpolytopesubdivisionstep} on $F_{i_m},\ldots,F_{i_1}$ and Lemma \ref{lem:markedflowpolytopesubdivisionstep} on $v_{F_{i_m}},\ldots,v_{F_{i_1}}$ respectively. The integral equivalence $\Gamma\colon\mathcal{O}(P,A)_\lambda\to \mathcal{F}_{G_{(P,A,\lambda)}}$ induces a bijection $\gamma$ from regions of $\Delta$ to regions of $\Delta'$.
 	
	As described in \cite{MMlidskii} Lemma 4.1, flows on $G_{P,A,\lambda}$ with netflow \[(a_1-\om_1,\ldots,a_{k-1}-\om_{k-1},-\om_k,\ldots,-\om_{n-1},0)\] 
	are in bijection with leaves of the canonical compounded reduction tree of $G_{(P,A,\lambda)}$ with $a_i$ edges outgoing from the $i$th source vertex. The flow values on edges incoming to each vertex are read off from the composition corresponding to the noncrossing bipartite tree chosen when reducing that vertex. The volume-preserving bijection $\gamma$ provides a correspondence between these leaves and linear extensions of $P$ with the marked elements in positions  $1,2+a_1,\ldots,k+a_1+\cdots+a_{k-1}$.
 \end{proof}
 See Figure \ref{fig:bijectionexample} for an illustration of Corollary \ref{cor:generalbij}.
 
 \newpage
 
\bibliographystyle{plain}
\bibliography{GT-biblio}
\end{document}